\def\fr{\frac}
\edef\savecatcodeat{\the\catcode`@}
\def\tb@ifSpecChars#1#2{#1}
\def\tb@ifNoSpecChars#1#2{#2}
\def\tableau{%
  \bgroup% matched in \tb@tableauD
  \@ifstar{\let\Tif\tb@ifNoSpecChars\tb@tableauB}% *, don't use special chars
          {\let\Tif\tb@ifSpecChars\tb@tableauB}}% no *, use special chars
\def\tb@tableauB{% add [] if no [options]
  \@ifnextchar[{\tb@tableauC}{\tb@tableauC[]}}
\def\tb@tableauC[#1]{\hbox\bgroup%
    \let\\=\cr% end line
    \def\bl{\global\let\tbcellF\tb@cellNF}%
    \def\tf{\global\let\tbcellF\tb@cellH}% highlighted cell
%
%   \dimen0=cell size, \dimen1=frame size, \dimen2=strut size
    \dimen2=\ht\strutbox \advance\dimen2 by\dp\strutbox%
    \ifx\baselinestretch\undefined\relax%
    \else%
% want \dimen2 = \dimen2 / \baselinestretch, but can't do that if
% \baselinestretch has a decimal, so fake it to 2 decimal places.
       \dimen0=100sp \dimen0=\baselinestretch\dimen0%
       \dimen2=100\dimen2 \divide\dimen2 by\dimen0%
    \fi%
    \let\tpos\tb@vcenter% default position
    \tb@initYoung% default tableau type
    \tb@options#1\eoo% parse options
    \let\arrow\tb@arrow%
    \dimen0=\Tscale\dimen2%
    \dimen1=\dimen0 \advance\dimen1 by \tb@fframe%
    \lineskip=0pt\baselineskip=0pt% line spacing will be from \vbox to \dimen0
%
    % \mkcell{#1} --  format individual cell:
    %   set cellF to default frame.
    %   supply argument for cell if none given
    %   format argument in sized box, then superimpose cell frame.
    \def\tb@nothing{}%
    \def\endcellno{$\rss\egroup\bss\egroup}% end cell w/o overlap
    \def\endcell{\endcellno\kern-\dimen0}% end cell & prepare to overlap it
    \def\begincell{\vbox to\dimen0\bgroup\vss\hbox to\dimen0\bgroup\hss$}%
    \let\overlay\tb@overlay%
    \let\fl\tb@fl%
    \let\lss\hss\let\rss\hss\let\tss\vss\let\bss\vss% cell alignment
    \def\mkcell##1{% format individual cell
        \let\tbcellF\tb@cellD% default cell frame
        \def\tb@cellarg{##1}% store cell contents
        % provide default contents if cell empty
        \ifx\tb@cellarg\tb@nothing\let\tb@cellarg\tb@cellE\fi%
        \begincell\tb@cellarg\endcellno% the actual cell content
        \tbcellF}% draw cell frame
    \let\savecellF\tbcellF% save global value of cellF in case of nested tableau
     \Tif{\catcode`,=4\catcode`|=\active}{}\tb@tableauD}%
\let\tb@savetableauD\tableauD% save any current definition
\gdef\tableauD#1{%
  \Tif{% make all the command characters active in math mode when #1 parsed
    \mathcode`|="8000 \mathcode`*="8000%
    \mathcode`~="8000 \mathcode`@="8000%
    \def@{\bullet}%
    \let|\cr% end line
    \let*\tf% highlighted cell
    \let~\sk% skew cell
  }{}%
  \tpos{\tabskip=0pt\halign{&\mkcell{##}\cr#1\crcr}}%
  \global\let\tbcellF\savecellF% restore global value
  \egroup% match \hbox\bgroup at start of \tableauC
  \egroup}% match \bgroup at start of \tableau
\let\tb@tableauD\tableauD% rename the command
\let\tableauD\tb@savetableauD% restore old command with this name
\let\tb@savetableauD\undefined
\def\tb@options#1{\ifx#1\eoo\relax\else\tb@option#1\expandafter\tb@options\fi}
\def\tb@option#1{%
  \if#1t\let\tpos\tb@vtop\fi%        t = align at top
  \if#1c\let\tpos\tb@vcenter\fi%     c = align at center
  \if#1b\let\tpos\vbox\fi%           b = align at bottom
  \if#1F\tb@initFerrers\fi%          F = Ferrers diagram
  \if#1Y\tb@initYoung\fi%            Y = Young diagram
  \if#1s\tb@initSmall\fi%            s = small boxes
  \if#1m\tb@initMedium\fi%           m = medium boxes
  \if#1l\tb@initLarge\fi%            l = large boxes
  \if#1p\tb@initPartition\fi%            p = small partition sized boxes
  \if#1a\tb@initArrow\fi%            a = use arrow font as base dimension
}
\def\tb@vcenter#1{\ifmmode\vcenter{#1}\else$\vcenter{#1}$\fi}
\def\tb@vtop#1{\hbox{\raise\ht\strutbox\hbox{\lower\dimen0\vtop{#1}}}}
\def\tb@initPartition{\def\Tscale{.3}}
\def\tb@initSmall{\def\Tscale{1}}
\def\tb@initMedium{\def\Tscale{2}}
\def\tb@initLarge{\def\Tscale{3}}
\def\tb@initArrow{\dimen2=1.25em}
\def\tb@initYoung{%
  \def\tb@cellE{}% empty cell stays empty
  \let\tb@cellD\tb@cellN% default frame is normal frame
  \def\sk{\global\let\tbcellF\tb@cellNF}}% skew cells are empty
\def\tb@initFerrers{%
  \def\tb@cellE{\bullet}% empty cell gets bullet
  \let\tb@cellD\tb@cellNF% default frame is no frame
  \def\sk{\bullet}}% skew cell gets bullet
\def\tb@sframe#1{%
  \vbox to0pt{%            Embed frame in a box of no vert or hor extent
    \vss%                            pull box above reference point
    \hbox to0pt{%
      \hss%                          pull box left of reference point
      \vbox to\dimen1{%              Actual width of frame
        \hrule depth #1 height0pt% draw top edge of frame
        \vss%                     begin vcenter sides
        \hbox to\dimen1{%           horiz box with side edges just inside
          \vrule width #1 height\dimen1% left edge
          \hss%                     stretch center
          \vrule width #1%         right edge
          }%
        \vss%                     end vcenter sides
        \hrule height #1 depth 0in% bottom edge
        }%
      \kern-\tb@hframe%           horiz alignment off by half line width
      }%
    \kern-\tb@hframe}}%           vert alignment off by half line width
\def\tb@hframe{.2pt}\def\tb@fframe{.4pt}\def\tb@bframe{2pt}
\def\tb@cellH{\tb@sframe{\tb@bframe}}       % bold frame
\def\tb@cellNF{}                            % no frame
\def\tb@cellN{\tb@sframe{\tb@fframe}}       % normal frame
\let\tbcellF\tb@cellN                       % default is normal
\def\tb@rpad{1pt}
\def\tb@lpad{1pt}
\def\tb@tpad{1.8pt}
\def\tb@bpad{1.8pt}
\def\tb@overlay{\endcell\@ifnextchar[{\tb@overlaya}{\begincell}}
\def\tb@overlaya[#1]{\vbox to\dimen0\bgroup%
  \tb@overlayoptions#1\eoo%
  \tss\hbox to\dimen0\bgroup\lss$}
\def\tb@overlayoptions#1{\ifx#1\eoo\relax\else\tb@overlayoption#1\expandafter\tb@overlayoptions\fi}
\def\tb@overlayoption#1{
  \if#1t\def\tss{\vskip\tb@tpad}\let\bss\vss\fi% t = align at top
  \if#1c\let\tss\vss\let\bss\vss\fi%             c = align at center
  \if#1b\def\bss{\vskip\tb@bpad}\let\tss\vss\fi% b = align at bottom
  \if#1l\def\lss{\hskip\tb@lpad}\let\rss\hss\fi% l = align at left
  \if#1m\let\lss\hss\let\rss\hss\fi%             m = align at middle
  \if#1r\def\rss{\hskip\tb@rpad}\let\lss\hss\fi% r = align at right
}
\def\tb@fl{\endcell\begincell\vrule depth 0pt width \dimen0 height \dimen0 \endcell\begincell}
\def\tb@arrowpad{.5}
\newoptcommand{\tb@arrow}{\@ne}[2]{%
  \endcell% end previous cell contents
   % Keep all arrow geometry parameters local.
   % The \endgroup is in \tb@draw
   \begingroup%
   \let\dg@getnodesize\tb@getnodesize% substitute routine to get nodesize
   % Get optional arrow USERSIZE.
   \dg@USERSIZE=#1\relax%
   \ifnum\dg@USERSIZE<\@ne \dg@USERSIZE=\@ne \fi%
   % Parse arrow specification.
   \dg@parse{#2}%
   \dg@label{\tb@draw{#1}{#2}}}% draw arrow
\def\tb@getnodesize#1#2#3#4#5{\dimen3=\tb@arrowpad\dimen2 #4=\dimen3 #5=\dimen3\relax}
\def\tb@getnodesize#1#2#3#4#5{\ifnum#2=0\ifnum#3=0\tb@getnodesizetail{#4}{#5}\else\tb@getnodesizehead{#4}{#5}\fi\else\tb@getnodesizehead{#4}{#5}\fi}
\def\tb@getnodesizetail#1#2{\dimen3=.5\dimen2 #1=\dimen3 #2=\dimen3}
\def\tb@getnodesizehead#1#2{\dimen3=.5\dimen2 #1=\dimen3 #2=\dimen3}
\def\tb@draw#1#2#3#4{%
  % grid geometry is determined by the tableau grid
        \dg@X=0\dg@Y=0\dg@XGRID=1\dg@YGRID=1\unitlength=.001\dimen0%
        \dg@LBLOFF=\dgLABELOFFSET \divide\dg@LBLOFF\unitlength%
        \dg@drawcalc% compute arrow geometry
        \begincell% start tableau cell
        \let\lams@arrow\tb@lams@arrow% substitute routine
  % draw arrow              VVVVV matches endgroup in \dg@draw
        \begin{picture}(0,0)\begingroup\dg@draw{#1}{#2}{#3}{#4}\end{picture}%
        \endcell% end tableau cell
        \endgroup% match \begingroup in \tb@arrow
        \begincell}% start new entry in this cell
\def\tb@lams@arrow#1#2{%
 \lams@firstx\z@\lams@firsty\z@
 \lams@lastx#1\relax\lams@lasty#2\relax
 \lams@center\z@
 %
 % compute direction flags
 \N@false\E@false\H@false\V@false
 \ifdim\lams@lastx>\z@\E@true\fi
 \ifdim\lams@lastx=\z@\V@true\fi
 \ifdim\lams@lasty>\z@\N@true\fi
 \ifdim\lams@lasty=\z@\H@true\fi
 \NESW@false
 \ifN@\ifE@\NESW@true\fi\else\ifE@\else\NESW@true\fi\fi
 %
 % compute shaft char info with help of \lams@slope macro 
 \ifH@\else\ifV@\else
  \lams@slope
  \ifnum\lams@tani>\lams@tanii
   \lams@ht\ten@\p@\lams@wd\ten@\p@
   \multiply\lams@wd\lams@tanii\divide\lams@wd\lams@tani
  \else
   \lams@wd\ten@\p@\lams@ht\ten@\p@
   \divide\lams@ht\lams@tanii\multiply\lams@ht\lams@tani
  \fi
 \fi\fi
%%%%%%%%%%%%%%%%% BEGIN disable
% %
% % adjust arrow placement for source and target style
% \ifH@\else\ifV@\else
%  \ifnum\lams@target=\thr@@
%   \ifN@\advance\lams@lasty-.3\lams@ht
%   \else\advance\lams@lasty.3\lams@ht\fi
%  \fi
%  \ifnum\lams@source=\tw@
%   \ifE@\advance\lams@firstx.3\lams@ht
%   \else\advance\lams@firstx-.3\lams@ht\fi
%  \fi
%  \ifnum\lams@target=12
%   \ifN@\advance\lams@lasty-\lams@ht
%   \else\advance\lams@lasty\lams@ht\fi
%  \fi
% \fi\fi
%%%%%%%%%%%%%%%%% END disable
 %
 % draw arrow, special-casing horizontal and vertical ones
 \ifH@  \lams@harrow
 \else\ifV@ \lams@varrow
 \else \lams@darrow
 \fi\fi
}
\let\savecatcodeat\undefined
\numberwithin{equation}{section}
\renewcommand{\subsubsection}{\@startsection
{subsubsection} {3} {0mm} {\baselineskip} {-0.5\baselineskip} {\normalfont\normalsize\bfseries}} \makeatother
\def\cal L{{\mathcal L}}
\def \part {\vdash}
\newcommand{\cercle}[1]{\ensuremath{\setlength{\unitlength}{1ex}\begin{picture}(2.8,2.8)\put(1.5,1.2){\circle{2.8}\makebox(-5.6,0){#1}}\end{picture}}}
\theoremstyle{plain}
\newtheorem{definition}{Definition} %[section]
\newtheorem{theorem}[definition]{Theorem}
\newtheorem{example}[definition]{Example}
\newtheorem{lemma}[definition]{Lemma}
\newtheorem{corollary}[definition]{Corollary}
\newtheorem{proposition}[definition]{Proposition}
\newtheorem{remark}[definition]{Remark}
\theoremstyle{definition}
\numberwithin{equation}{section}
\title{The $m$-symmetric Macdonald polynomials}
\begin{document}

\author{Manuel Concha}
\address{Instituto de Matem\'aticas, Universidad de
Talca, 2 Norte 685, Talca, Chile.}
\email{manuel.concha@utalca.cl}
\author{Luc Lapointe}
\address{Instituto de Matem\'aticas, Universidad de
Talca, 2 Norte 685, Talca, Chile.}
\email{llapointe@utalca.cl}

\begin{abstract}
The $m$-symmetric Macdonald polynomials form a basis of the space of polynomials that are
symmetric in the variables $x_{m+1},x_{m+2},\dots$ (while having no special symmetry in the variables $x_1,\dots,x_m$).
We establish in this article the fundamental properties of the $m$-symmetric Macdonald polynomials.  These include among other things the orthogonality with respect to a natural scalar product, as well as formulas for the squared norm, the evaluation, and the inclusion.  We also obtain a Cauchy-type identity for the $m$-symmetric Macdonald polynomials which specializes to the known Cauchy-type identity for the non-symmetric Macdonald polynomials.
\end{abstract}

\keywords{Macdonald polynomials, non-symmetric Macdonald polynomials, double affine Hecke algebra, Cherednik operators}

\thanks{Funding: this work was supported by
the Fondo Nacional de Desarrollo Cient\'{\i}fico y Tecnol\'ogico de Chile (FONDECYT) Regular Grant \#1210688 and the Beca de Doctorado Nacional ANID \#21201319.}

\maketitle

\section{Introduction}
A version of the Macdonald polynomials forming a basis of the space $R_m$ of polynomials that are
symmetric in the variables $x_{m+1},x_{m+2},\dots$  was  studied in  \cite{L}. 
The $m$-symmetric Macdonald polynomials, which are indexed by $m$-partitions, are defined (up to a constant) in a finite number $N$ of variables as\footnote{The $m$-symmetric Macdonald polynomials have also been considered in \cite{BG}, where they are called ``partially-symmetric Macdonald polynomials''.}
\begin{equation*} 
  P_\Lambda(x;q,t) \propto \mathcal S^t_{m+1,\dots, N} E_\eta(x;q,t)
\end{equation*}
where $S^t_{m+1,\dots, N}$ is the $t$-symmetrization operator acting on the variables $x_{m+1},\dots,x_N$, and where $E_\eta(x;q,t)$ is a non-symmetric Macdonald polynomial (indexed by a composition that rearranges to $\Lambda$). The $m$-symmetric Macdonald polynomials contain as a subfamily the non-symmetric Macdonald polynomials indexed by compositions whose length is not larger than $m$, which
implies in particular that any non-symmetric Macdonald polynomial is an $m$-symmetric Macdonald polynomial for a sufficiently large $m$.

In \cite{L}, the goal was not so much to study the $m$-symmetric Macdonald polynomials per se but to study an extension to the $m$-symmetric world of the original Macdonald positivity conjecture (now theorem \cite{Haiman}):
\begin{equation} \label{Macdopos}
  \tilde J_\Lambda(x;q,t) = \sum_\Omega K_{\Omega \Lambda}(q,t) \, s_\Omega(x;t)
 \qquad {\rm with~} K_{\Omega \Lambda}(q,t) \in \mathbb N[q,t]??
\end{equation}
where $ \tilde J_\Lambda(x;q,t)$ is a plethystically modified version of the integral form of $P_\Lambda(x;q,t)$, and where $s_\Omega(x;t)$ are the $m$-symmetric Schur functions (a good portion of \cite{L} was dedicated to the study of those Schur functions now depending on a parameter $t$).
In this article, we focus instead on the properties of the $m$-symmetric Macdonald polynomials and show that they naturally extend those of the usual Macdonald polynomials (which correspond to the $m=0$ case). Most importantly,
just as is the case for Macdonald polynomials, there is a fundamental scalar product in $R_m$ with respect to which the  power sum symmetric functions in the $m$-symmetric world are orthogonal: 
\begin{equation*} 
\langle p_\Lambda(x;t)\, ,\, p_\Omega(x;t)  \rangle_m =\delta_{\Lambda \Omega} \, q^{|\pmb a|}t^{{\rm Inv} (\pmb a)} z_\lambda(q,t)
\end{equation*}
This product allows to define the $m$-symmetric Macdonald polynomials directly without having first to consider the case when the number of variables is finite. Indeed, 
the $m$-symmetric Macdonald polynomials form the 
  unique basis $\{ P_\Lambda(x;q,t)\}_\Lambda$ of $R_m$ such that (Proposition~\ref{propdefmacdo}):
  \begin{enumerate}
\item $\displaystyle{\langle P_\Lambda(x;q,t)\, ,\, P_\Omega(x;q,t)  \rangle_m = 0 {\rm ~if~} \Lambda\neq\Omega}  $ \quad {\rm (orthogonality)}    
\item $\displaystyle{P_\Lambda(x;q,t)= m_\Lambda + \sum_{\Omega < \Lambda} d_{\Lambda \Omega}(q,t) \, m_\Omega}$ \quad {\rm (unitriangularity)}
  \end{enumerate}
where the $m_\Lambda$'s are the $m$-symmetric monomials and where the order on $m$-partitions generalizes the usual dominance ordering.

Since $R_0 \subseteq R_1 \subseteq R_2 \subseteq \cdots$, it is natural to consider the inclusion $i: R_m \to R_{m+1}$ and the restriction $r:R_{m+1} \to R_m$ (which essentially amounts to letting $x_{m+1}=0$).  In Theorem~\ref{theoinclusion}, we obtain a simple formula for the inclusion 
$i(P_\Lambda)$ of an $m$-Macdonald polynomial (the restriction of an $m$-Macdonald polynomial was considered in \cite{L}).  Because of the relation
$$
\langle i(f), g \rangle_{m+1} =\langle f, r(g) \rangle_m  
$$
which holds for any $f \in R_m, g \in R_{m+1}$,  the inclusion and the restriction can then be used to
derive an explicit formula for the squared norm of the $m$-symmetric Macdonald polynomials in terms of arm and leg-lengths (Theorem~\ref{theoortho}):
$$
 \langle P_\Lambda(x;q,t)\, ,\, P_\Lambda(x;q,t)  \rangle_m =  q^{|\pmb a|}t^{{\rm Inv} (\pmb a)} \prod_{s \in \Lambda} \frac{1 -q^{\tilde a(s)+1}t^{\tilde \ell(s)}}{1 -q^{a(s)}t^{\ell(s)+1}}
$$
We obtain moreover a reproducing kernel for the scalar product (Corollary~\ref{coroKm}):
\begin{equation*} 
 {{t^{-{m\choose 2} }}} K_0(x,y)  T^{(x)}_{{\mathbf \omega}_m} \left[ \frac{\prod_{i+j \leq m} (1-tq^{-1} x_i y_j)}{\prod_{i+j \leq m+1}(1-q^{-1} x_i y_j)} \right]= \sum_{\Lambda} b_\Lambda(q,t) P_\Lambda(x;q,t) P_\Lambda(y;q,t)
  \end{equation*}
where the superscript $(x)$ indicates that the Hecke algebra operator  $T^{(x)}_{{\mathbf \omega}_m}$ acts on the $x$ variables, where  $K_0(x,y)$ is the Macdonald polynomial reproducing kernel, and where  $b_\Lambda(q,t)^{-1}=\| P_\Lambda(x;q,t)\|^2$.  Having established that the $m$-symmetric Macdonald polynomials satisfy the relation (Proposition~\ref{propoinv})
$$
q^{|\pmb a| } t^{{\rm Inv}(\pmb a)} P_\Lambda(x_m q^{-1},\dots,x_1 q^{-1},x_{m+1},x_{m+2},\dots;q^{-1},t^{-1}) = t^{{m \choose 2}}   \bar T_{\omega_m} P_\Lambda(x;q,t) 
$$
the aforementioned reproducing kernel can then be rewritten in a more elegant form as
$$
K_0(x,\tilde y)  \left[ \prod_{1\leq i <j \leq m}  \frac{1-tx_i y_j}{1-x_i y_j} \right]  \left[ \prod_{1\leq i \leq m}  \frac{1}{1-x_i y_i} \right] 
 =  \sum_{\Lambda}  a_\Lambda(q,t) P_\Lambda(x;q,t) P_\Lambda(y;q^{-1},t^{-1})
$$
where $\tilde y$ stands for the alphabet $\tilde y=(qy_1,\dots,q y_m,y_{m+1},y_{m+2},\cdots)$, and where the expansion coefficients are such that $a_\Lambda(q,t)=  q^{|\pmb a|}t^{{\rm Inv} (\pmb a)}  b_\Lambda(q,t)$.
In Remark~\ref{remarkcauchy}, we show that when the number of variables $N$ is finite and $N=m$, this Cauchy-type identity essentially reduces to the reproducing kernel for the non-symmetric Macdonald polynomials introduced in  \cite{MN}. This Cauchy-type identity also suggests that the scalar product $\langle \cdot,\cdot \rangle_m$ can be replaced by a sesquilinear scalar product as is discussed in Remark~\ref{remscalar}.

Finally, we derive an explicit formula for the principal specialization of an $m$-symmetric Macdonald polynomial (Proposition~\ref{propeval}):
\begin{equation*}
    P_{\Lambda}(1,t,\dots,t^{N-1};q,t) = \displaystyle t^{{n}(\Lambda)-{\rm coInv}(\pmb a)} \dfrac{[N-m]_t!}{[N]_t!} \prod_{s \in \Lambda^\circ} \dfrac{(1-q^{{a}'(s)}t^{N-\ell'(s)})}{(1-q^{{a}(s)}t^{\ell(s)+1})}
\end{equation*}
We then show that, as in the usual Macdonald polynomial case, there exists a natural extension $u_\Lambda$ of the principal specialization which satisfies the following symmetry (Proposition~\ref{SimMac}):
$$
u_\Omega(\tilde P_{\Lambda})=u_\Lambda (\tilde  P_{\Omega})
$$
where $\tilde P_{\Lambda}(x;q,t)=P_{\Lambda}(x;q,t)/P_{\Lambda}(1,t,\dots,t^{N-1};q,t)$.

The outline of the article is the following. The relevant definitions concerning the double affine Hecke algebra and the non-symmetric Macdonald polynomials are provided in Section~\ref{sec2}.  The extension to the $m$-symmetric world of the basic concepts in symmetric function theory are presented in Section~\ref{sec3}. The $m$-symmetric Macdonald polynomials are then defined in Section~\ref{sec4} along with a few of their elementary properties established in \cite{L}.  In Section~\ref{sec5}, formulas for the inclusion and the evaluation of an $m$-Macdonald polynomials are given.  In Section~\ref{sec6}, we introduce the scalar product with respect to which the $m$-symmetric Macdonald polynomials are later shown to be orthogonal.  We also obtain in Section~\ref{sec6} a reproducing kernel for the $m$-symmetric Macdonald polynomials (which is then shown to specialize to the reproducing kernel for the non-symmetric Macdonald polynomials), and the orthogonality/unitriangularity characterization of the $m$-symmetric Macdonald polynomials. Finally, the article ends with Appendix~\ref{appA}, \ref{appB} and \ref{appC} in which we have relegated a few technical proofs.

\section{Double affine Hecke algebra and non-symmetric Macdonald polynomials} \label{sec2}
The non-symmetric Macdonald polynomials can be defined as the common eigenfunctions of the Cherednik operators \cite{Che}, which are operators that belong to the double affine Hecke algebra and act on the ring $\mathbb Q(q,t)[x_1,\dots,x_N]$.  We now give the relevant definitions \cite{Mac2,Mar}.  
Let the exchange operator $K_{i,j}$ be such that
$$K_{i,j} f(\dots, x_i,\dots,x_j,\dots)= f(\dots, x_j,\dots,x_i,\dots)$$
We then define the generators $T_i$ of the affine Hecke algebra as
\begin{equation} \label{eqTi}
T_i=t+\frac{tx_i-x_{i+1}}{x_i-x_{i+1}}(K_{i,i+1}-1),\quad i=1,\ldots,N-1,
\end{equation}
and
$$
 {T_0=t+\frac{qtx_N-x_1}{qx_N-x_1}(K_{1,N}\tau_1\tau_N^{-1}-1)}\, ,
$$
where $\tau_i f(x_1,\dots, x_i,\dots,x_N)= f(x_1,\dots, qx_i,\dots,x_N)$ is the $q$-shift operator.
The $T_i$'s satisfy the relations  {($0\leq i\leq N-1$)}:
\begin{align*} &(T_i-t)(T_i+1)=0\nonumber\\
&T_iT_{i+1}T_i=T_{i+1}T_iT_{i+1}\nonumber\\
&T_iT_j=T_jT_i \, ,\quad i-j \neq \pm 1 \mod N
\end{align*}
where the indices are taken modulo $N$.
To define the Cherednik operators, we also need to introduce
the operator $\omega$ defined as:  
$$
\omega=K_{N-1,N}\cdots K_{1,2} \, \tau_1.
$$
We note that $\omega T_i=T_{i-1}\omega$ for $i=2,\dots,N-1$.

We are now in position to define the Cherednik operators:
$$
Y_i=t^{-N+i}T_i\cdots T_{N-1}\omega \bar T_1 \cdots \bar T_{i-1},
$$
where 
$$
\bar T_j :=  T_j^{-1}=t^{-1}-1+t^{-1}T_j,
$$
 which follows from the quadratic relation satisfied by the generators
 of the  Hecke algebra.
 The Cherednik operators obey the following  relations:
 \begin{align} \label{tsym1}
T_i \, Y_i&= Y_{i+1}T_i+(t-1)Y_i\nonumber \\
T_i \, Y_{i+1}&= Y_{i}T_i-(t-1)Y_i\nonumber \\
T_i Y_j & = Y_j T_i \quad {\rm if~} j\neq i,i+1.
\end{align}
It can be easily deduced from these relations that
\begin{equation}\label{TYi}
(Y_i+Y_{i+1})T_i= T_i (Y_i+Y_{i+1}) \qquad {\rm and } \qquad (Y_i Y_{i+1}) T_i =
T_i (Y_i Y_{i+1}). 
\end{equation}

An element  $\eta=(\eta_1,\dots,\eta_N)$ of $\mathbb Z_{\geq 0}^{N}$ is called a (weak) composition with $N$ 
parts (or entries).
It will prove convenient to represent a composition by a Young (or Ferrers) diagram.  Given a composition $\eta$ with $N$ parts, let $\eta^+$ be the partition obtained by reordering the entries of $\eta$.  The diagram corresponding to $\eta$ is the Young diagram of $\eta^+$ with an $i$-circle (a circle filled with an $i$) added to the right of the row of size $\eta_i$ (if there are many rows of size $\eta_i$, the circles are ordered from top to bottom in increasing order).  For instance, given $\eta=(0,2,1,3,2,0,2,0,0)$, we have
 $$
\eta \quad \longleftrightarrow  \quad {\tableau[scY]{&& & \bl \cercle{4}\\& & \bl \cercle{2} \\& & \bl \cercle{5}\\ & & \bl \cercle{7} \\ & \bl \cercle{3} \\ \bl \cercle{1} \\ \bl \cercle{6} \\ \bl \cercle{8} \\ \bl \cercle{9}}}
 $$

The Cherednik operators $Y_i$'s commute among each others, $[Y_i,Y_j]=0$,
and can be simultaneously diagonalized. Their eigenfunctions are the
 (monic) non-symmetric Macdonald polynomials (labeled by compositions).
For $x=(x_1,\dots,x_N)$, 
the non-symmetric Macdonald polynomial $E_\eta(x;q,t)$ is 
the 
unique polynomial with rational coefficients in $q$ and $t$ 
that is triangularly related to the monomials (in the Bruhat ordering on compositions) 
\begin{equation} \label{orderE}
E_\eta(x;q,t)=x^\eta+\sum_{\nu\prec\eta}b_{\eta\nu}(q,t) \, x^\nu
\end{equation}
and that satisfies, for all $i=1,\dots,N$, 
\begin{equation} 
  Y_i E_\eta=\bar \eta_iE_\eta,\qquad\text{where}\qquad  \bar\eta_i =q^{\eta_i}t^{1-r_\eta(i)} \label{eigenvalY}
\end{equation}
  with $r_\eta(i)$ standing for the row (starting from the top) in which the $i$-circle appears in
  the diagram of $\eta$.
 The Bruhat order on compositions is defined as follows:
 $$
   \nu\prec\eta\quad \text{ {iff}}\quad \nu^+<\eta^+\quad \text{or} \quad \nu^+=\eta^+\quad \text{and}\quad w_\eta < w_\nu,
 $$
 where $w_{\eta}$ is the unique permutation of minimal length such 
that $\eta = w_{\eta} \eta^+$ ($w_{\eta}$ permutes the entries of $\eta^+$).
In the Bruhat order on the symmetric group, $w_\eta {<} w_\nu$ iff
$w_{\eta}$ can be obtained as a  {proper} subword  of $w_{\nu}$.  The Cherednik operators have a triangular action on monomials \cite{Mac1}, that is,
\begin{equation} \label{triangY}
Y_i x^\eta =   \bar \eta_i x^\eta + {\rm ~smaller~terms}
\end{equation}
where ``smaller terms'' means that the remaining monomials $x^\nu$  appearing in the expansion are such that $\nu \prec \eta$.
  
The following three properties of the non-symmetric Macdonald polynomials are fundamental.
The first one expresses the stability of the polynomials $E_\eta$ with respect to the number of variables  (see e.g. \cite[eq. (3.2)]{Mar}):
\begin{equation} \label{property1}
E_\eta (x_1,\dots,x_{N-1},0;q,t) =
\left \{ 
\begin{array}{ll}
E_{\eta_-} (x_1,\dots,x_{N-1};q,t)
& {\rm if~} 
\eta_N = 0\, , \\
0 & {\rm if~} \eta_N \neq 0\, .
\end{array} \right.
 \end{equation}
where $\eta_-=(\eta_1,\ldots, \eta_{N-1}) $.   
The second one gives the action of the operators $T_i$ on $E_\eta$:
\begin{equation} \label{property2}
T_i E_{\eta} = \left\{ 
\begin{array}{ll}
\left(\frac{t-1}{1-\delta_{i,\eta}^{-1}}\right) E_\eta + t E_{s_i \eta} & {\rm if~} 
\eta_i < \eta_{i+1} \, ,  \\
t E_{\eta} &  {\rm if~} 
\eta_i = \eta_{i+1} \, ,\\
\left(\frac{t-1}{1-\delta_{i,\eta}^{-1}}\right) E_\eta + \frac{(1-t{\delta_{i,\eta}})(1-t^{-1}\delta_{i,\eta})}{(1-{\delta_{i,\eta}})^2} E_{s_i \eta} & {\rm if~} 
\eta_i > \eta_{i+1} \, ,
\end{array} \right. 
\end{equation}
where $\delta_{i,\eta}=\bar \eta_i/\bar \eta_{i+1}$ {and} $s_i \eta=(\eta_1,\dots,\eta_{i-1},\eta_{i+1},\eta_i,\eta_{i+2},\dots,\eta_N)$.
The third property, together with the previous one,  allows to construct the non-symmetric Macdonald polynomials recursively.  Given 
$\Phi_q=t^{1-N}T_{N-1}\cdots T_1 x_1$, we have that \cite{BF}
\begin{equation} \label{eqPhi}
 \Phi_q E_\eta(x;q,t) = t^{r_\eta(1)-N}  E_{\Phi \eta}(x;q,t)
\end{equation}
where $\Phi \eta=(\eta_2,\eta_3,\dots,\eta_{N-1},\eta_1+1)$.

Finally, we introduce the $t$-symmetrization operator:
\begin{equation} \label{symop}
\mathcal S^t_{m+1,N}=\sum_{\sigma\in S_{N-m}}T_\sigma
\end{equation}
where the sum is over the permutations in the symmetric group $S_{N-m}$
and
where
$$
    T_\sigma=T_{i_1+m}\cdots T_{i_\ell+m}
    \qquad  \text{if}\quad \sigma=s_{i_1}\cdots s_{i_\ell}.
$$
    We stress that there is a shift by $m$ in the indices of the Hecke algebra generators. Essentially, the $t$-symmetrization operator $\mathcal S^t_{m+1,N}$ acts on the variables $x_{m+1},x_{m+2},\dots, x_N$ while the usual $t$-symmetrization operator $\mathcal S^t_{1,N}$ acts on all variables $x_{1},x_{2},\dots, x_N$.
    \begin{remark}  \label{remarksym}
      We have by  \eqref{eqTi} that if a polynomial $f(x_1,\dots,x_N)$ is such that  $T_i f(x_1,\dots,x_N)=t  f(x_1,\dots,x_N)$, then $f(x_1,\dots,x_N)$ is symmetric in the variables $x_i$ and $x_{i+1}$.  As such, from \eqref{property2}, $E_\eta(x_1,\dots,x_N)$ is symmetric in the variables $x_i$ and $x_{i+1}$ whenever $\eta_i=\eta_{i+1}$.  We also have that $ \mathcal S^t_{m+1,N} f(x_1,\dots,x_N)$ is symmetric in the variables  $x_{m+1},x_{m+2},\dots, x_N$ for any polynomial $f(x_1,\dots,x_N)$ given that it can easily be checked that
\begin{equation}   \label{TionSymt}
  T_i \, \mathcal S^t_{m+1,N} =  \mathcal S^t_{m+1,N} \, T_i
  = t \, \mathcal S^t_{m+1,N} \qquad {\rm for~}i=m+1,\dots,N-1
\end{equation}
\end{remark}
 We should note that, up to a constant, the usual Macdonald polynomial, $P_\lambda(x_1,\dots,x_N;q,t)$, is obtained by $t$-symmetrizing a non-symmetric Macdonald polynomial:
$$
P_\lambda(x;q,t) \propto \mathcal S_{1,N}^{t} \, E_\eta(x;q,t)
$$
where $\eta$ is any composition that rearranges to $\lambda$.

Finally, the following relations will prove to be useful:
\begin{equation} \label{eqLR}
\mathcal S_{m+1,N}^t= \mathcal S_{m+2,N}^t \mathcal R_{m+1,N} = \mathcal L_{m+1,N} \mathcal S_{m+2,N}^t  = \mathcal L_{m+1,N}' \mathcal S_{m+1,N-1}^t 
\end{equation}
where
$$
\mathcal R_{m+1,N}=1+T_{m+1} + T_{m+1} T_{m+2} + \dots + T_{m+1} T_{m+2} \cdots T_{N-1}
$$
$$
\mathcal L_{m+1,N}=1+T_{m+1} + T_{m+2} T_{m+1} + \dots + T_{N-1} T_{N-2} \cdots T_{m+1}
$$
and
$$
\mathcal L_{m+1,N}'=1+T_{N-1} + T_{N-2} T_{N-1} + \dots + T_{m+1} T_{m+2} \cdots T_{N-1}
$$

\section{The ring of $m$-symmetric functions} \label{sec3}
Most of this section is taken from \cite{L}.
Let $\mathbf \Lambda=\mathbb Q(q,t)[h_1,h_2,h_3,\dots]$ be the ring of symmetric functions in the variables $x_1,x_2,x_3,\dots$ (the standard references on symmetric functions are \cite{M,Stan}), where
$$
h_r=h_r(x_1,x_2,x_3,\dots) = \sum_{i_1 \leq i_2 \leq \cdots \leq i_r} x_{i_1} x_{i_2} \cdots x_{i_r}
$$
 Bases of $\mathbf \Lambda$ are 
indexed by partitions $\lambda=(\lambda_1\geq\dots\geq\lambda_k>0)$ 
whose degree $\lambda$ is $|\lambda|=\lambda_1 +\cdots +\lambda_k$
and whose length $\ell(\lambda)=k$.  Each partition $\lambda$ has an 
associated Young diagram with $\lambda_i$ lattice squares in the $i^{th}$ 
row, from top to  bottom (English notation).  Any lattice square $(i,j)$ 
in the $i$th row and $j$th column of a Young diagram is called a cell.
 The partition
$\lambda\cup\mu$ is the non-decreasing rearrangement of the parts 
 of $\lambda$ and $\mu$.
  The dominance order $\geq$ is defined on partitions by
$\lambda\geq \mu$ when $\lambda_1+\cdots+\lambda_i\geq
\mu_1+\cdots+\mu_i$ for all $i$, and $|\lambda|=|\mu|$.

We define
the ring $R_m$ of $m$-symmetric functions as the subring of $\mathbb Q(q,t)[[x_1,x_2,x_3,\dots]]$ 
made  of formal power series that are symmetric  in the variables $x_{m+1},x_{m+2},x_{m+3},\dots$.
In other words, we have
$$R_m \simeq \mathbb Q(q,t)[x_1,\dots,x_m] \otimes \mathbf \Lambda_m$$
where
$\mathbf \Lambda_m$ is the ring of symmetric functions in the variables $x_{m+1},x_{m+2},x_{m+3},\dots$.
It is  immediate that $R_0=\mathbf \Lambda$ is the usual ring of symmetric functions and that $R_0 \subseteq R_1 \subseteq R_2 \subseteq \cdots $.  Bases of $R_m$ are naturally indexed by $m$-partitions which are pairs $\Lambda=(\pmb a;\lambda)$, where 
$\pmb a= (a_1,\dots,a_m) \in \mathbb Z_{\geq 0}^m$ is a composition with $m$ parts, and where
$\lambda$ is a partition.  We will call the entries of $\pmb a$ and $\lambda$ the
non-symmetric and symmetric entries of $\Lambda$ respectively.
In the following, unless stated otherwise, $\Lambda$ and $\Omega$ will always stand respectively for the $m$-partitions $\Lambda=(\pmb a;\lambda)$ and $\Omega=(\pmb b;\mu)$. Observe that we use a different notation for  the composition $\pmb a$ with $m$ parts (which corresponds to the non-symmetric entries 
of $\Lambda$)
than for the composition $\eta$ with $N$ parts (which will typically index a non-symmetric Macdonald polynomial).

Given a composition $\pmb a$ and a partition $\lambda$, $\pmb a \cup \lambda$ will denote the partition obtained by reordering the entries of the concatenation of $\pmb a$ and $\lambda$.
The degree of an $m$-partition $\Lambda$, denoted $|\Lambda|$, is the sum of the degrees of $\pmb a$ and $\lambda$, that is, 
$|\Lambda|=a_1+\dots +a_m+\lambda_1+\lambda_2+\cdots$. We also define the 
length of $\Lambda$ as $\ell(\Lambda)=m+\ell(\lambda)$. We will say that $\pmb a$ is dominant if $a_1 \geq a_2 \geq \cdots \geq a_m$, and by extension, we will say that $\Lambda=(\pmb a; \lambda)$ is dominant if $\pmb a$ is dominant.  If $\pmb a$ is not dominant, we let $\pmb a^+$ be the dominant composition obtained by reordering the entries of $\pmb a$.

There is a natural way to represent an $m$-partition by a Young diagram. 
The diagram corresponding to $\Lambda$ is the Young diagram of $\pmb a \cup \lambda$ with an $i$-circle  added to the right of the row of size $a_i$ for $i=1,\dots,m$ (if there are many rows of size $a_i$, the circles are ordered from top to bottom in increasing order).  For instance, given $\Lambda=(2,0,2,1; 3,2 )$, we have
 $$
\Lambda \quad \longleftrightarrow  \quad {\tableau[scY]{&& & \bl \\& & \bl \cercle{1} \\& & \bl \cercle{3}\\ & & \bl  \\ & \bl \cercle{4} \\ \bl \cercle{2} }}
$$
Observe that when $m=0$, the diagram associated to  $\Lambda=( ; \lambda)$ coincides with the Young diagram associated to $\lambda$.
Also note that if $\eta$ is a composition with $m$ parts, then the diagram of $\eta$ coincides with the diagram of the $m$-partition $\Lambda=(\pmb a;\emptyset)$, where $\pmb a=\eta$.  We let $\Lambda^{(0)}=\pmb a \cup \lambda$, that is, $\Lambda^{(0)}$ is the partition 
obtained from the diagram of $\Lambda$ by discarding all the circles.
More generally, for $i=1,\dots,m$, we let $\Lambda^{(i)}=(\pmb a+1^i) \cup \lambda$, where $\pmb a +1^i=(a_1+1,\dots,a_i+1,a_{i+1},\dots,a_m)$. In other words, 
$\Lambda^{(i)}$ is the partition obtained from the diagram associated to $\Lambda$ by changing all of the $j$-circles, for $1 \leq j \leq i$, into squares and discarding the remaining circles.  Taking as above
$\Lambda=(2,0,2,1; 3,2)$, we have
$\Lambda^{(0)}=(3,2,2,2,1)$, $\Lambda^{(1)}=(3,3,2,2,1)$, $\Lambda^{(2)}=(3,3,2,2,1,1)$, $\Lambda^{(3)}=(3,3,3,2,1,1)$ and $\Lambda^{(4)}=(3,3,3,2,2,1)$. 
We then define the
dominance ordering on $m$-partitions to be  such that
\begin{equation} \label{deforder}
\Lambda \geq \Omega \iff \Lambda^{(i)} \geq \Omega^{(i)} \qquad {\rm for~all~}i=0,\dots,m
\end{equation}
where the order on the r.h.s.
is the usual dominance order on partitions.

We will associate arm and leg-lengths to the cells of the diagram of an $m$-partition.  Because of the circles, we will need two notions of arm-lengths as well as two notions of leg-lengths.  The arm-length  $a(s)$ 
is equal to the number of cells in $\Lambda$ strictly to the right of $s$ (and in the same row).  Note that if there is a circle at the end of its row, then it adds one to the arm-length of $s$.
 The arm-length $\tilde a(s)$ is exactly as $a(s)$ except that the circle at the end of the row does not contribute to $\tilde a(s)$.

The leg-length $\ell(s)$ is equal to the number of cells in $\Lambda$ strictly below $s$ (and in the same column). If at the bottom of its column there are $k$ circles whose fillings are smaller than the filling of the circle at the end of its row, then they add $k$ to the value of the leg-length of $s$.  If the row does not end with a circle then none of the circles at the bottom of its column  contributes to the leg-length.
 The leg-length $\tilde \ell(s)$ is exactly as $\ell(s)$ except that the circles at the bottom of the column contribute to $\tilde \ell(s)$ when there is no circle at the end of the row of $s$.

\begin{example} \label{exal}
The values of $a(s)$ and $\ell(s)$ in each cell of the diagram of  $\Lambda={(2,0,0,2;4,1,1)}$ are
$$
 {\tableau[scY]{\mbox{\scriptsize{\rm 34}}&\mbox{\scriptsize{\rm 22}}&  \mbox{\scriptsize{\rm 10}} & \mbox{\scriptsize{\rm 00}} & \bl \\\mbox{\scriptsize{\rm 23}}& \mbox{\scriptsize{\rm 11}}& \bl \cercle{\rm 1} \\\mbox{\scriptsize{\rm 24}}& \mbox{\scriptsize{\rm 10}} & \bl \cercle{\rm 4}\\ \mbox{\scriptsize{\rm 01}}& \bl  \\ \mbox{\scriptsize{\rm 00}} \\ \bl \cercle{\rm 2} \\ \bl \cercle{\rm 3}}}
 $$
while those of 
 $\tilde a(s)$ and $\tilde \ell(s)$ are
 $$
 {\tableau[scY]{\mbox{\scriptsize{\rm 36}}&\mbox{\scriptsize{\rm 22}}&  \mbox{\scriptsize{\rm 12}} & \mbox{\scriptsize{\rm 00}} & \bl \\\mbox{\scriptsize{\rm 13}}& \mbox{\scriptsize{\rm 01}}& \bl \cercle{\rm 1} \\\mbox{\scriptsize{\rm 14}}& \mbox{\scriptsize{\rm 00}} & \bl \cercle{\rm 4}\\ \mbox{\scriptsize{\rm 03}}& \bl  \\ \mbox{\scriptsize{\rm 02}} \\ \bl \cercle{\rm 2} \\ \bl \cercle{\rm 3}}}
 $$
\end{example}

Let the $m$-symmetric monomial function $m_\Lambda(x)$ be defined as
$$
m_\Lambda(x) := x_1^{a_1} \cdots x_m^{a_m} \, m_\lambda (x_{m+1},x_{m+2},\dots)= x^{\pmb a}  \, m_\lambda (x_{m+1},x_{m+2},\dots)
$$
where $m_\lambda (x_{m+1},x_{m+2},\dots)$ is the usual monomial symmetric function
 in the variables $x_{m+1},x_{m+2},\dots$
$$
m_\lambda (x_{m+1},x_{m+2},\dots)= \sum_\alpha x_{m+1}^{\alpha_1} x_{m+2}^{\alpha_2} \cdots   
$$
with the sum being over all  derrangements $\alpha$ of $(\lambda_1,\lambda_2,\dots,\lambda_{\ell(\lambda)},0,0,\dots)$.
It is immediate that $\{ m_\Lambda(x) \}_\Lambda$ is a basis of $R_m$.

Another basis of $R_m$ is provided by the $m$-symmetric power sums.
$$
p_\Lambda(x) := x_1^{a_1} \dots x_m^{a_m} \, p_\lambda (x)= x^{\pmb a} \, p_\lambda (x)
$$
 It should be observed that
 the variables in $p_\lambda$, contrary to those of $m_\lambda$ in $m_\Lambda(x)$, start at $x_1$ instead of
$x_{m+1}$. In this expression,  $p_\lambda (x)$ is the usual power-sum symmetric function
$$
p_\lambda(x) = \prod_{i=1}^{\ell(\lambda)} \, p_{\lambda_i}(x)
$$
where $p_r(x)=x_1^r+x_2^r+\cdots$.

Let $H_{\pmb a}(x_1,\dots,x_m;t)= E_{(a_1,\dots,a_m,0^{N-m})}(x_1,\dots,x_N;0,t)$ be the  non-symmetric Hall-Littlewood polynomial (the non-symmetric Macdonald polynomials only depend on the variables $x_1,\dots,x_m$ when $q=0$ and the indexing compositions have length at most $m$). For simplicity, we will denote 
the  non-symmetric Hall-Littlewood polynomial  $H_{\pmb a}(x;t)$ instead of  $H_{\pmb a}(x_1,\dots,x_m;t)$.
We should note that the polynomial  $H_{\pmb a}(x;t)$ can be constructed recursively as follows.  If 
$\pmb a$ is dominant then $H_{\pmb a}(x;t)=x^{\pmb a}$.  Otherwise,
$T_i H_{\pmb a}(x;t)=H_{s_i \pmb a}(x;t)$ if $a_i > a_{i+1}$ (with $s_i \pmb a=(a_1\dots,a_{i+1},a_i,\dots,a_m)$).  Since $H_{\pmb a}(x;1)=x^{\pmb a}$,
the following $t$-deformation of the $m$-symmetric power sum basis
\begin{equation} \label{eqplambda}
p_\Lambda(x;t)= H_{\pmb a}(x;t) p_\lambda(x)
\end{equation}
also provides a basis of $R_m$.

\section{$m$-symmetric Macdonald polynomials} \label{sec4}
The results of this section are again taken from \cite{L}.
The $m$-symmetric Macdonald polynomials in  $N$ variables are obtained by applying the $t$-symmetrization 
operator $\mathcal S_{m+1,N}^t$ introduced in \eqref{symop}
on non-symmetric Macdonald polynomials.
To be more specific, we associate to the $m$-partition $\Lambda= (\pmb a;\lambda)$ the composition $\eta_{\Lambda,N}={(a_1,\dots,a_m,\lambda_{N-m},...,\lambda_1)}$, where we consider that $\lambda_i=0$ if $i > \ell(\lambda)$.
The corresponding $m$-symmetric Macdonald polynomial in $N$ variables  is then defined as
\begin{equation}
P_\Lambda(x_1,\dots,x_N;q,t)= \frac{1}{u_{\Lambda,N}(t)} \, \mathcal S^t_{m+1,N } \, E_{\eta_{\Lambda,N}}(x_1,\dots,x_N;q,t)  
\end{equation}
with the normalization constant $u_{\Lambda,N}(t)$  given by
\begin{equation} \label{normalization}
u_{\Lambda,N}(t)=   \left( \prod_{i \geq 0} [n_\lambda(i)]_{t^{-1}}! \right)
t^{(N-m)(N-m-1)/2} 
\end{equation}
where $n_\lambda(i)$ is the number of entries in $\lambda_1,\dots,\lambda_{N-m}$ that are equal to $i$ (note that $i$ can be equal to zero), and where
$$
[k]_q=\frac{(1-q)(1-q^2)\cdots (1-q^k)}{(1-q)^k}
$$
Observe that the normalization constant $u_{\Lambda,N}(t)$ is chosen such that the coefficient of $m_\Lambda$ in $P_\Lambda(x;q,t)$ is equal to 1.
\begin{remark} \label{remarkgamma}
If
$\gamma$ is any composition such that $\gamma_i=a_i$ for $i=1,\dots,m$ and such that the remaining entries rearrange to $\lambda$, then
\begin{equation*}
P_\Lambda(x_1,\dots,x_N;q,t) =d_{\gamma}(q,t) \, \mathcal S^t_{m+1,N } \, E_{\gamma}(x_1,\dots,x_N;q,t)  
\end{equation*}
for some non-zero coefficient $d_{\gamma}(q,t) \in \mathbb Q(q,t)$.
This is an easy consequence of \eqref{property1} and \eqref{TionSymt} (see for
instance Lemma~55 in \cite{L} for a more precise statement).
\end{remark}

In the case where $\Lambda=(\pmb a;\emptyset)$, an $m$-symmetric Macdonald polynomial is simply a non-symmetric Macdonald polynomial:
\begin{equation} \label{eqPE}
P_{(\pmb a;\emptyset)} (x;q,t)= E_\eta(x,q,t)
\end{equation}  
where $\eta=(a_1,\dots,a_m,0^{N-m})$.

The $m$-symmetric Macdonald polynomials are stable with respect to the number of variables   
\begin{proposition} \label{propostable}
  Let $N$ be the number of variables and suppose that $N>m$.
Then
  $$
  P_\Lambda(x_1,\dots,x_{N-1},0;q,t)=
  \left \{ 
  \begin{array}{ll}
    P_{\Lambda}(x_1,\dots,x_{N-1};q,t) & {\rm if~} N>m+\ell(\lambda) \\
    0 & {\rm otherwise} 
  \end{array} \right .  
$$
\end{proposition}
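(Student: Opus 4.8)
The plan is to reduce everything to the stability \eqref{property1} of the non-symmetric Macdonald polynomials, using Remark~\ref{remarkgamma} to trade $\eta_{\Lambda,N}$ for a convenient composition and Remark~\ref{remarksym} to keep track of symmetry. Write $P_\Lambda^{(N)}$ for the polynomial in $N$ variables. By Remark~\ref{remarkgamma} one has $P_\Lambda^{(N)}=d_\gamma\,\mathcal S^t_{m+1,N}E_\gamma$ for any composition $\gamma$ with $\gamma_i=a_i$ $(1\le i\le m)$ whose remaining entries rearrange to $\lambda$ (padded with zeros). Since each $T_i$ with $i\ge m+1$ only permutes the last $N-m$ entries (by \eqref{property2}), $\mathcal S^t_{m+1,N}E_\gamma$ is a linear combination of polynomials $E_{\gamma'}$ with $\gamma'$ of this same type; and by \eqref{property1} the substitution $x_N=0$ kills every summand with $\gamma'_N\ne0$ and sends a summand with $\gamma'_N=0$ to $E_{\gamma'_-}$ in $N-1$ variables. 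This single mechanism drives both cases.

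For the vanishing case $m<N\le m+\ell(\lambda)$, if $N<m+\ell(\lambda)$ then already $m_\Lambda^{(N)}=x^{\pmb a}m_\lambda(x_{m+1},\dots,x_N)=0$ for want of variables, so $P_\Lambda^{(N)}=0$; and if $N=m+\ell(\lambda)$ then the last $N-m$ entries of every admissible $\gamma$ form a rearrangement of $\lambda_1,\dots,\lambda_{\ell(\lambda)}$, all strictly positive, so every $\gamma'$ occurring in $\mathcal S^t_{m+1,N}E_\gamma$ has $\gamma'_N>0$. By the reduction above, $P_\Lambda^{(N)}|_{x_N=0}=0$.

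For $N>m+\ell(\lambda)$ there is a spare zero, so I would pick $\gamma$ with $\gamma_N=0$. After setting $x_N=0$ and discarding the vanishing summands, $P_\Lambda^{(N)}|_{x_N=0}$ is a combination of polynomials $E_\delta$ in $N-1$ variables with $\delta$ again of the form $(\pmb a;\text{a rearrangement of }\lambda)$; write $V$ for the span of all such $E_\delta$, so that $P_\Lambda^{(N)}|_{x_N=0}\in V$. It is moreover symmetric in $x_{m+1},\dots,x_{N-1}$, being the restriction of something symmetric in $x_{m+1},\dots,x_N$. The key point is that the symmetric part of $V$ is one-dimensional: if $v=\sum_\delta c_\delta E_\delta\in V$ is symmetric then $T_iv=tv$ for $m+1\le i\le N-2$ by \eqref{eqTi}, whence $\mathcal S^t_{m+1,N-1}v=\big(\sum_{\sigma\in S_{N-1-m}}t^{\ell(\sigma)}\big)v$ is a nonzero multiple of $v$; but $\mathcal S^t_{m+1,N-1}v=\sum_\delta c_\delta\,\mathcal S^t_{m+1,N-1}E_\delta$ is a multiple of $P_\Lambda^{(N-1)}$ by Remark~\ref{remarkgamma} applied in $N-1$ variables. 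Hence $v$, and in particular $P_\Lambda^{(N)}|_{x_N=0}$, is a scalar multiple of $P_\Lambda^{(N-1)}$.

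To see that the scalar is $1$, I would track the coefficient of $m_\Lambda$. Restriction sends each $m_\Omega^{(N)}=x^{\pmb b}m_\mu(x_{m+1},\dots,x_N)$ to $m_\Omega^{(N-1)}$ if $\ell(\mu)<N-m$ and to $0$ if $\ell(\mu)=N-m$, never to a nontrivial combination; since $\ell(\lambda)<N-m$, the leading term $m_\Lambda^{(N)}$ of $P_\Lambda^{(N)}$ restricts to $m_\Lambda^{(N-1)}$ with coefficient $1$ and no other monomial contributes to it, so the coefficient of $m_\Lambda^{(N-1)}$ in $P_\Lambda^{(N)}|_{x_N=0}$ is $1$. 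As $P_\Lambda^{(N-1)}$ also has this coefficient equal to $1$, the scalar is $1$ and $P_\Lambda^{(N)}|_{x_N=0}=P_\Lambda^{(N-1)}$ follows. I expect the main obstacle to be the identity case, and within it the one-dimensionality of the symmetric part of $V$: this is what lets me avoid a direct (and messy) analysis of how the symmetrizer interacts with $x_N=0$, the latter being awkward precisely because $E_\gamma$ with $\gamma_N=0$ still depends on $x_N$. The vanishing case, by contrast, is immediate once the reduction of the first paragraph is in place.
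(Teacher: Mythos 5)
Your proof is correct, and there is an important preliminary remark: the paper itself does not prove Proposition~\ref{propostable} --- Section~\ref{sec4} imports it from \cite{L} --- so there is no in-paper argument to compare against, and your proposal must stand on its own. It does. The reduction mechanism is sound: by \eqref{property2} each $T_i$ with $i\geq m+1$ keeps you inside the span of the $E_{\gamma'}$ whose first $m$ entries are $\pmb a$ and whose last $N-m$ entries rearrange $\lambda$ (padded with zeros), so $\mathcal S^t_{m+1,N}E_\gamma$ lies in that span, and \eqref{property1} then handles $x_N=0$ term by term. The vanishing case $N=m+\ell(\lambda)$ is immediate from this. For the identity case, your key lemma --- any element of $V$ symmetric in $x_{m+1},\dots,x_{N-1}$ is a multiple of $P_\Lambda^{(N-1)}$ --- is proved correctly: such a $v$ satisfies $T_iv=tv$ by \eqref{eqTi}, hence $\mathcal S^t_{m+1,N-1}v=\bigl(\sum_{\sigma\in S_{N-1-m}}t^{\ell(\sigma)}\bigr)v$ with nonzero scalar, while Remark~\ref{remarkgamma} in $N-1$ variables sends each $\mathcal S^t_{m+1,N-1}E_\delta$ into the line spanned by $P_\Lambda^{(N-1)}$ (this needs $N-1\geq m+\ell(\lambda)$, which holds). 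Pinning the scalar to $1$ by tracking the coefficient of $m_\Lambda$ is legitimate, since the paper fixes $u_{\Lambda,N}(t)$ precisely so that this coefficient equals $1$ in any number of variables, and restriction sends $m_\Omega^{(N)}$ to $m_\Omega^{(N-1)}$ or $0$, never mixing labels.

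One caveat, interpretive rather than substantive. For $m<N<m+\ell(\lambda)$ your inference ``$m_\Lambda^{(N)}=0$, hence $P_\Lambda^{(N)}=0$'' is not valid as written: a polynomial does not vanish because one designated monomial does, and if one read the defining formula literally, $\eta_{\Lambda,N}=(a_1,\dots,a_m,\lambda_{N-m},\dots,\lambda_1)$ would \emph{truncate} $\lambda$ and produce a nonzero polynomial attached to a shorter $m$-partition. The ``otherwise'' clause only makes sense for $N<m+\ell(\lambda)$ under the convention that $P_\Lambda^{(N)}:=0$ whenever $\ell(\Lambda)=m+\ell(\lambda)>N$, which is how the paper uses the proposition (e.g.\ in the proofs of Propositions~\ref{propsym} and~\ref{propt0}); with that convention the case is trivial, and the substantive vanishing case $N=m+\ell(\lambda)$, which you handle correctly, is the one that matters.
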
  

The $m$-symmetric Macdonald polynomials are the common eigenfunctions of a set of $m+1$ commuting operators.  First, they are eigenfunctions of the Cherednik operators $Y_i$, for  $i=1,\dots,m$:
\begin{equation} \label{defeigeni}
  Y_i P_\Lambda(x_1,\dots,x_N;q,t) = \varepsilon_\Lambda^{(i)}(q,t)  P_\Lambda(x_1,\dots,x_N;q,t) \qquad {\rm with} \qquad  \varepsilon_\Lambda^{(i)}(q,t)=  q^{a_i} t^{1-r_\Lambda(i)} 
\end{equation}
where we recall that $r_\Lambda(i)$ is the row in which the
$i$-circle  appears in the diagram associated to $\Lambda$.
They are also eigenfunctions of the operator
$$
D =  Y_{m+1}+\dots+Y_N - \sum_{i=m+1}^{N} t^{1-i}
$$
which is such that
\begin{equation} \label{defeigenD}
D \,  P_\Lambda(x_1,\dots,x_N;q,t)= \varepsilon_\Lambda^{D}(q,t)
P_\Lambda(x_1,\dots,x_N;q,t) \quad {\rm with} \quad  
 \varepsilon_\Lambda^{D}(q,t)=
 {\sum_{i}}' q^{\Lambda^{(0)}_i} t^{1-i} -\sum_{i=m+1}^{m+\ell (\lambda)} t^{1-i}
\end{equation}
 where the prime indicates that the sum is only over the rows of the diagram of $\Lambda$ that do not end with a circle.  We stress that the eigenvalues
 $ \varepsilon_\Lambda^{(i)}(q,t)$ and $ \varepsilon_\Lambda^{D}(q,t)$  do not depend on the number $N$ of variables and uniquely determine the $m$-partition $\Lambda$.

Letting the number of variables to be infinite, the $m$-symmetric Macdonald polynomials then form a basis of $R_m$.
 \begin{proposition} \label{propounitriang}
We have that
\begin{equation} \label{unitriang}
P_\Lambda(x;q,t)= m_\Lambda + \sum_{\Omega < \Lambda} d_{\Lambda \Omega}(q,t) \, m_\Omega
\end{equation}
Hence, the $m$-symmetric Macdonald polynomials  form a basis of $R_m$.
\end{proposition}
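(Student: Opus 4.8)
The plan is to establish the expansion \eqref{unitriang} first in a fixed but large number $N>m+\ell(\lambda)$ of variables and then to let $N\to\infty$. Starting from the definition $P_\Lambda=u_{\Lambda,N}(t)^{-1}\,\mathcal S^t_{m+1,N}E_{\eta_{\Lambda,N}}$, I would expand the non-symmetric Macdonald polynomial through its Bruhat-triangularity \eqref{orderE}, namely $E_{\eta_{\Lambda,N}}=x^{\eta_{\Lambda,N}}+\sum_{\nu\prec\eta_{\Lambda,N}}b_{\eta_{\Lambda,N}\nu}(q,t)\,x^\nu$, so that the whole statement reduces to understanding the single-monomial images $\mathcal S^t_{m+1,N}x^\nu$ and to bounding, in the dominance order \eqref{deforder}, which $m$-symmetric monomials they can produce.

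Because $T_{m+1},\dots,T_{N-1}$ fix $x_1,\dots,x_m$, one has $\mathcal S^t_{m+1,N}x^\nu=x_1^{\nu_1}\cdots x_m^{\nu_m}\,\mathcal S^t_{m+1,N}\bigl(x_{m+1}^{\nu_{m+1}}\cdots x_N^{\nu_N}\bigr)$, and by \eqref{TionSymt} the second factor is symmetric in $x_{m+1},\dots,x_N$. Writing $\mathcal S^t_{m+1,N}=\sum_\sigma T_\sigma$ and using \eqref{eqTi}, each generator $T_i$ sends a monomial to a combination of monomials in which the exponent pair in positions $i,i+1$ is replaced by a pair of equal sum lying weakly between the two original exponents; such a move never increases the content in dominance. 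Hence $\mathcal S^t_{m+1,N}x^\nu$ is supported on monomials whose symmetric content $\mu'$ satisfies $\mu'\leq\mu:=(\nu_{m+1},\dots,\nu_N)^+$, all with the same $x_1,\dots,x_m$-exponents $\pmb b=(\nu_1,\dots,\nu_m)$. Since forming the union with a fixed partition preserves dominance, $(\pmb b+1^i)\cup\mu'\leq(\pmb b+1^i)\cup\mu$ for every $i$, i.e.\ $(\pmb b;\mu')\leq(\pmb b;\mu)=:\Omega(\nu)$; therefore $\mathcal S^t_{m+1,N}x^\nu\in\mathrm{span}\{m_\Omega:\Omega\leq\Omega(\nu)\}$. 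For $\nu=\eta_{\Lambda,N}$ one has $\Omega(\nu)=\Lambda$, and the normalization \eqref{normalization} is precisely designed so that the resulting coefficient of $m_\Lambda$ equals $1$.

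What remains—and this is the heart of the matter—is the purely combinatorial claim that the map $\nu\mapsto\Omega(\nu)$ is order-preserving from the Bruhat order on compositions to the dominance order \eqref{deforder} on $m$-partitions; applied to $\nu\preceq\eta_{\Lambda,N}$ this yields $\Omega(\nu)\leq\Omega(\eta_{\Lambda,N})=\Lambda$. The inequality $\Omega(\nu)^{(0)}\leq\Lambda^{(0)}$ is automatic, being just $\nu^+\leq\eta_{\Lambda,N}^+$. The real difficulty is the family $\Omega(\nu)^{(i)}\leq\Lambda^{(i)}$ for $i=1,\dots,m$, since these partitions add $1$ to the first $i$ non-symmetric entries \emph{before} reordering, and so depend on the precise composition $(\nu_1,\dots,\nu_m)$ and on the placement of the circles rather than on the contents alone. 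I would establish the order-preservation by induction along the covering relations of the Bruhat order, translating each cover into the circle-diagram language of Section~\ref{sec3} and comparing, for each $i$, the partial sums defining $\Lambda^{(i)}$ and $\Omega(\nu)^{(i)}$. This verification is where the bulk of the work lies and is the main obstacle.

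Putting the pieces together gives, in $N$ variables, $P_\Lambda=m_\Lambda+\sum_{\Omega<\Lambda}d_{\Lambda\Omega}(q,t)\,m_\Omega$. By the stability Proposition~\ref{propostable}, the coefficients $d_{\Lambda\Omega}(q,t)$ stabilize once $N$ is large, so the expansion descends to the ring $R_m$ of infinitely many variables, which is \eqref{unitriang}. Finally, since $\{m_\Lambda\}_\Lambda$ is a basis of $R_m$ and \eqref{unitriang} expresses the $P_\Lambda$ unitriangularly in terms of the $m_\Omega$ with respect to a partial order that is finite in each degree, the transition matrix is invertible degree by degree; hence $\{P_\Lambda(x;q,t)\}_\Lambda$ is a basis of $R_m$.
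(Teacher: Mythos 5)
Your first two paragraphs are essentially correct: the factorization of $\mathcal S^t_{m+1,N}x^\nu$ through the head variables, the observation that each $T_i$ only redistributes an adjacent exponent pair among pairs with the same sum lying weakly between the original ones, and the fact that union with a fixed partition preserves dominance do show that $\mathcal S^t_{m+1,N}x^\nu$ lies in $\mathrm{span}\{m_\Omega : \Omega\leq\Omega(\nu)\}$ with $\Omega(\nu)=\bigl((\nu_1,\dots,\nu_m);(\nu_{m+1},\dots,\nu_N)^+\bigr)$. The proof collapses, however, exactly at the step you yourself call the heart of the matter: the claim that $\nu\preceq\eta_{\Lambda,N}$ implies $\Omega(\nu)\leq\Lambda$ is not merely unproven in your proposal (you defer it to an unspecified induction on Bruhat covers) --- it is \emph{false}. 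Take $m=2$, $\Lambda=\bigl((0,0);(2)\bigr)$, so that $\eta_{\Lambda,N}=(0,0,0,\dots,0,2)$, and take $\nu=(1,1,0,\dots,0)$. Then $\nu^+=(1,1)<(2)=\eta_{\Lambda,N}^+$ in dominance, so $\nu\prec\eta_{\Lambda,N}$ in the order of \eqref{orderE}; but $\Omega(\nu)=\bigl((1,1);\emptyset\bigr)$ has $\Omega(\nu)^{(2)}=(2,2)$, whereas $\Lambda^{(2)}=(1,1)\cup(2)=(2,1,1)$, and $(2,2)\not\leq(2,1,1)$. Thus $\Omega(\nu)$ and $\Lambda$ are incomparable in the order \eqref{deforder}, and your reduction would wrongly allow the monomial $x_1x_2$, i.e.\ $m_{((1,1);\emptyset)}$, to occur in $P_\Lambda$.

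The deeper problem is that Bruhat triangularity \eqref{orderE} is too coarse an input for this proposition: the support of $E_\eta$ is genuinely smaller than the Bruhat lower ideal, and the proposition is true only because of such vanishing. In the example above the coefficient of $x_1x_2$ in $E_{(0,0,\dots,0,2)}$ actually vanishes; e.g.\ for $N=3$ one computes within the paper's conventions that $E_{(0,0,2)}=x_3^2+\tfrac{1-t}{1-qt}\,(x_1+x_2)x_3$, with no $x_1x_2$ term even though $(1,1,0)\prec(0,0,2)$. Moreover no cancellation under symmetrization can rescue your argument: since $\mathcal S^t_{m+1,N}$ fixes the head exponents and only redistributes the tail, the coefficient of $m_{((1,1);\emptyset)}$ in $\mathcal S^t_{m+1,N}E_{\eta_{\Lambda,N}}$ equals $b_{\eta\nu}$ times the (nonzero) Poincar\'e polynomial $\mathcal S^t_{m+1,N}1$, so the proposition is literally equivalent to a family of statements $b_{\eta\nu}=0$ that do not follow from \eqref{orderE}. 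Any correct proof must therefore use finer information than Bruhat triangularity --- for instance the triangular action \eqref{triangY} of the Cherednik operators on monomials combined with the eigenvalue characterization \eqref{defeigeni}--\eqref{defeigenD}, or a combinatorial formula controlling the support of $E_\eta$ --- none of which your proposal invokes. (Note also that the paper itself does not prove this proposition: Section~\ref{sec4} quotes it from \cite{L}, so there is no internal proof to compare against; judged on its own, your argument has an irreparable gap at its central step.)
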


Finally, let
\begin{equation} \label{clambda}
c_\Lambda(q,t) = \prod_{s \in \Lambda} {(1-q^{{a}(s)}t^{\ell(s)+1})}
\end{equation}
where the product is over all the squares in the diagram of $\Lambda$ (not including the circles), and where the arm and leg-lengths were defined in Section~\ref{sec3}. The integral form of the $m$-symmetric Macdonald polynomials is then defined as
$J_\Lambda(x;q,t)=c_\Lambda(q,t) P_{\Lambda}(x;q,t)$.  It is this version that appears (after a certain plethystic substitution) in the positivity conjecture \eqref{Macdopos}.

\section{Inclusion, evaluation and symmetry} \label{sec5}

Since an $m$-symmetric function is also an $(m+1)$-symmetric function, it is natural to consider the inclusion $i: R_m \to R_{m+1}, a \mapsto a$.  The inclusion of  an $m$-symmetric Macdonald polynomial turns out to have a simple formula which will prove fundamental in the next section.  The proof, being quite long and technical, will be relegated to Appendix~\ref{appA}.
\begin{theorem} \label{theoinclusion}
The inclusion $i: R_m \to R_{m+1}$ is such that
$$
i(P_\Lambda) = \sum_{\Omega} \psi_{\Omega/\Lambda}(q,t) P_\Omega
$$
where the sum is over all $(m+1)$-partitions $\Omega$ whose diagram is  obtained from that of $\Lambda$ by adding an $(m+1)$-circle at the end of a symmetric row (a row that does not end with a circle).
The coefficient  $\psi_{\Omega/\Lambda}(q,t)$ is given explicitly as
$$
\psi_{\Omega/\Lambda}(q,t)=
\prod_{s \in {\rm col}_{\Omega/\Lambda}} \frac{ 1-q^{a_\Lambda(s)+1} t^{\tilde \ell_\Lambda (s)}}{ 1-q^{a_\Omega(s)+1} t^{\tilde \ell_\Omega (s)}}
$$
where ${\rm col_{\Omega/\Lambda}}$ stands for the set of squares in the diagram of $\Omega$ that lie in the column of the $(m+1)$-circle  and in a symmetric row (a row that does not end with a circle), and where  the arm and leg-lengths were defined before Example~\ref{exal} (with the indices specifying with respect to which $m$-partition they are computed).
\end{theorem}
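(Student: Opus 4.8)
The plan is to work with a finite number $N$ of variables and to exploit the factorization $\mathcal S^t_{m+1,N}=\mathcal L_{m+1,N}\,\mathcal S^t_{m+2,N}$ from \eqref{eqLR}. Since both sides of the asserted identity are stable in the number of variables by Proposition~\ref{propostable}, it suffices to prove it in $N$ variables for $N$ large. Starting from the definition $P_\Lambda=u_{\Lambda,N}(t)^{-1}\,\mathcal S^t_{m+1,N}E_{\eta_{\Lambda,N}}$ and inserting this factorization, one checks directly that $\eta_{\Lambda,N}=\eta_{\Omega_0,N}$, where $\Omega_0$ is the $(m+1)$-partition obtained from $\Lambda$ by marking the lowest symmetric row (the row holding $\lambda_{N-m}$) with the $(m+1)$-circle. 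Hence $\mathcal S^t_{m+2,N}E_{\eta_{\Lambda,N}}=u_{\Omega_0,N}(t)\,P_{\Omega_0}$, so that $i(P_\Lambda)=\bigl(u_{\Omega_0,N}(t)/u_{\Lambda,N}(t)\bigr)\,\mathcal L_{m+1,N}\,P_{\Omega_0}$, and the problem is reduced to expanding $\mathcal L_{m+1,N}P_{\Omega_0}$ in the $(m+1)$-symmetric Macdonald basis.

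First I would determine which $\Omega$ occur. The polynomial $i(P_\Lambda)$ is a simultaneous eigenfunction of $Y_1,\dots,Y_m$ and of $D^{(m)}=Y_{m+1}+\cdots+Y_N-\sum_{i=m+1}^N t^{1-i}$ by \eqref{defeigeni}--\eqref{defeigenD}, whereas the $(m+1)$-symmetric Macdonald polynomials are eigenfunctions of $Y_1,\dots,Y_{m+1}$ and of $D^{(m+1)}=Y_{m+2}+\cdots+Y_N-\sum_{i=m+2}^N t^{1-i}$, the two operators being related by $D^{(m)}=Y_{m+1}+D^{(m+1)}-t^{-m}$. Matching the $Y_1,\dots,Y_m$ eigenvalues forces $b_i=a_i$ and $r_\Omega(i)=r_\Lambda(i)$ for $i\leq m$, and a short computation with \eqref{defeigenD} shows, using this relation between $D^{(m)}$ and $D^{(m+1)}$, that the $D^{(m)}$-eigenvalue of every $\Omega$ obtained from $\Lambda$ by marking a symmetric row equals that of $\Lambda$. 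Combining these eigenvalue constraints with the unitriangularity of Proposition~\ref{propounitriang} (which forces $\Omega^{(0)}=\Lambda^{(0)}$ and $|\Omega|=|\Lambda|$) identifies the support with exactly the set of $\Omega$ described in the statement.

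It then remains to compute the coefficients. Expanding $\mathcal L_{m+1,N}=1+T_{m+1}+T_{m+2}T_{m+1}+\cdots+T_{N-1}\cdots T_{m+1}$, I would argue that its summands successively raise the $(m+1)$-circle through the symmetric rows of $\Omega_0$: the operators $T_{m+2},\dots,T_{N-1}$ act as multiplication by $t$ on any $(m+1)$-symmetric polynomial by \eqref{TionSymt}, so the only nontrivial moves are produced by $T_{m+1}$, whose action is governed by \eqref{property2}. The key mechanism is that the eigenvalue ratios $\delta_{i,\eta}=\bar\eta_i/\bar\eta_{i+1}$ in \eqref{property2}, evaluated through \eqref{eigenvalY}, become factors of the form $1-q^{a(s)+1}t^{\tilde\ell(s)}$ indexed by the cells $s$ lying in the column of the circle; as the circle is raised past the cells of that column these factors telescope, producing the ratio $\prod_{s\in\mathrm{col}_{\Omega/\Lambda}}(1-q^{a_\Lambda(s)+1}t^{\tilde\ell_\Lambda(s)})/(1-q^{a_\Omega(s)+1}t^{\tilde\ell_\Omega(s)})$. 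Dividing by the normalization constants $u_{\Lambda,N}(t)$ and $u_{\Omega,N}(t)$ then yields $\psi_{\Omega/\Lambda}$ in the stated closed form.

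The main obstacle is precisely this coefficient computation. Controlling the full operator $\mathcal L_{m+1,N}$ applied to $P_{\Omega_0}$ requires tracking the numerous eigenvalue factors $\delta_{i,\eta}$ generated each time the circle crosses a row, verifying that the spurious non-eigenfunction terms cancel so that only genuine $(m+1)$-symmetric Macdonald polynomials survive, and checking that the surviving factors collapse into the compact arm- and leg-length product. This bookkeeping, rather than any single conceptual difficulty, is what makes the argument long and technical, and is why it is deferred to Appendix~\ref{appA}.
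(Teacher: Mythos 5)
Your first two steps are fine and, in the case of the support, genuinely different from the paper: the identity $\eta_{\Lambda,N}=\eta_{\Omega_0,N}$ and the factorization $\mathcal S^t_{m+1,N}=\mathcal L_{m+1,N}\mathcal S^t_{m+2,N}$ from \eqref{eqLR} do give $i(P_\Lambda)=\bigl(u_{\Omega_0,N}(t)/u_{\Lambda,N}(t)\bigr)\,\mathcal L_{m+1,N}P_{\Omega_0}$, and the eigenvalue argument can be made to identify the support (the paper instead reads the support off from an inductive Hecke-algebra expansion). One repair is needed there: unitriangularity does not ``force $\Omega^{(0)}=\Lambda^{(0)}$'' --- triangularity only bounds the support in dominance order. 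The clean statement is that every $(m+1)$-partition $\Omega$ arises from a unique $m$-partition $\Omega^\flat$ (delete the $(m+1)$-circle) by adding an $(m+1)$-circle to a symmetric row, your computation with \eqref{defeigeni}--\eqref{defeigenD} shows that the $(Y_1,\dots,Y_m,D^{(m)})$-eigenvalues of $P_\Omega$ equal those of $P_{\Omega^\flat}$, and these eigenvalues determine an $m$-partition uniquely; hence $P_\Omega$ can occur in $i(P_\Lambda)$ only if $\Omega^\flat=\Lambda$.

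The genuine gap is the third step, which is the actual content of the theorem: the product formula for $\psi_{\Omega/\Lambda}$ is never proved. First, the reduction is circular: since $\mathcal L_{m+1,N}\mathcal S^t_{m+2,N}=\mathcal S^t_{m+1,N}$, ``expanding $\mathcal L_{m+1,N}P_{\Omega_0}$ in the $(m+1)$-symmetric Macdonald basis'' is, up to the constant $u_{\Omega_0,N}/u_{\Lambda,N}$, exactly the expansion of $i(P_\Lambda)$ you started with, so no progress has been made. Second, the mechanism you propose is false: in the summand $T_{N-1}\cdots T_{m+2}T_{m+1}$ of $\mathcal L_{m+1,N}$, the operators $T_{m+2},\dots,T_{N-1}$ do \emph{not} act by $t$, because they are applied to $T_{m+1}P_{\Omega_0}$, which is no longer symmetric in $x_{m+2},x_{m+3},\dots$ (Remark~\ref{remarksym} and \eqref{TionSymt} apply only to polynomials symmetric in the relevant variables). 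If they did act by $t$, one would get $\mathcal L_{m+1,N}P_{\Omega_0}=P_{\Omega_0}+[N-m-1]_t\,T_{m+1}P_{\Omega_0}$, which is in general not even symmetric in $x_{m+2},\dots,x_N$, whereas $i(P_\Lambda)=P_\Lambda$ is. So the claimed ``telescoping'' of the $\delta_{i,\eta}$ factors is unsubstantiated, and deferring the bookkeeping to Appendix~\ref{appA} amounts to deferring to the proof you were asked to produce. What is missing is a workable inductive engine: the paper proceeds by induction on $N-m$, treating $\Lambda=(\pmb a;b^{N-m})$ as base case, extracting the coefficient of $P_\Omega$ via $\mathcal S^t_{m+1,N}=\mathcal S^t_{m+2,N}+\mathcal L_{m+2,N}T_{m+1}\mathcal S^t_{m+2,N}$ together with \eqref{property2} when the added circle is not in the lowest symmetric row, and in the remaining case lowering the degree with the operator $\Phi_q$ of \eqref{eqPhi} so that induction applies, then checking explicit rational-function identities that assemble the resulting two (or four) $\psi$-products into the stated one. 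Some substitute for that machinery is required; your proposal does not contain one.
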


Now, let ${\rm Inv}(\pmb a)$ be the number of inversions in $\pmb a$:
$$
{\rm Inv}(\pmb a)= \# \{ 1  \leq i <j \leq m \, | \, a_i < a_j \}
$$
and let
$$
{\rm coInv}(\pmb a)=m(m-1)/2-{\rm Inv}(\pmb a)=\# \{ 1  \leq i <j \leq m \, | \, a_i \geq a_j \}
$$
As usual, for a partition $\lambda$, we let  $n(\lambda)=\sum_i (i-1)\lambda_i$.
In the case of an $m$-partition, we will define $n(\Lambda):=n(\Lambda^{(m)})$,
where we recall that $\Lambda^{(m)}$ is the partition obtained from the diagram of $\Lambda$ by converting all the circles into squares.

We now give the principal specialization $u_{\emptyset}\bigl(f(x_1,x_2,\dots,x_N)\bigr):=f(1,t,\dots,t^{N-1})$ of an $m$-symmetric Macdonald polynomial.

\begin{proposition} \label{propeval}
  For $\Lambda=(\pmb a; \lambda)$, the principal specialization is given by
\begin{equation*}
    u_{\emptyset}(P_{\Lambda}(x;q,t)) = \displaystyle t^{{n}(\Lambda)-{\rm coInv}(\pmb a)} \dfrac{[N-m]_t!}{[N]_t!} \prod_{s \in \Lambda^\circ} \dfrac{(1-q^{{a}'(s)}t^{N-\ell'(s)})}{(1-q^{{a}(s)}t^{\ell(s)+1})}
\end{equation*}
where $\Lambda^\circ$ stands for the set of cells (including the circles)
in the diagram of $\Lambda$, and where the co-arm and co-leg are given respectively by $a'(s)=j-1$ and $\ell'(s)=i-1$ for the cell $s=(i,j)$.
\end{proposition}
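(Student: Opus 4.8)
The plan is to strip off the $t$-symmetrization by evaluating at the principal-specialization point $x_i=t^{i-1}$ and then reduce to the known evaluation of a single non-symmetric Macdonald polynomial. The key observation is that the rational coefficient in the Hecke generator \eqref{eqTi} vanishes at this point: since $tx_i-x_{i+1}=t\cdot t^{i-1}-t^i=0$ while $x_i-x_{i+1}=t^{i-1}(1-t)\neq 0$, and $(K_{i,i+1}-1)f$ is a polynomial for any polynomial $f$, we obtain
$$
u_\emptyset(T_i f)=t\,u_\emptyset(f)\qquad(1\le i\le N-1).
$$
Writing $T_\sigma=T_{i_1+m}\cdots T_{i_\ell+m}$ for a reduced word $\sigma=s_{i_1}\cdots s_{i_\ell}\in S_{N-m}$ and peeling the generators off one at a time gives $u_\emptyset(T_\sigma f)=t^{\ell(\sigma)}u_\emptyset(f)$, so summing over $S_{N-m}$ in \eqref{symop} and using the Poincar\'e-polynomial identity $\sum_{\sigma\in S_{N-m}}t^{\ell(\sigma)}=[N-m]_t!$ yields $u_\emptyset(\mathcal S^t_{m+1,N}E_{\eta_{\Lambda,N}})=[N-m]_t!\,u_\emptyset(E_{\eta_{\Lambda,N}})$. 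Together with the definition of $P_\Lambda$ and the normalization \eqref{normalization} this gives
$$
u_\emptyset(P_\Lambda)=\frac{[N-m]_t!}{u_{\Lambda,N}(t)}\,u_\emptyset\bigl(E_{\eta_{\Lambda,N}}\bigr),\qquad \eta_{\Lambda,N}=(a_1,\dots,a_m,\lambda_{N-m},\dots,\lambda_1).
$$

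Next I would insert the known principal specialization of the non-symmetric Macdonald polynomial $E_{\eta_{\Lambda,N}}(1,t,\dots,t^{N-1};q,t)$ (see e.g.\ \cite{Mac2,Mar}), which has the shape of a power of $t$ times a product of factors $(1-q^{a'(s)}t^{N-\ell'(s)})/(1-q^{a(s)+1}t^{\ell(s)+1})$ over the cells $s$ of the composition diagram, the arm- and leg-lengths being the composition statistics. The $t$-power alone can in fact be bootstrapped from \eqref{eqPhi}: the same vanishing lemma gives $u_\emptyset(\Phi_q E_\eta)=u_\emptyset(E_\eta)$ since $x_1$ specializes to $1$, whence $u_\emptyset(E_{\Phi\eta})=t^{N-r_\eta(1)}u_\emptyset(E_\eta)$. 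As a consistency check, when $m=0$ the prefactor $[N-m]_t!/u_{\Lambda,N}(t)$ collapses and the displayed formula reduces to $t^{n(\lambda)}\prod_{s\in\lambda}(1-q^{a'(s)}t^{N-\ell'(s)})/(1-q^{a(s)}t^{\ell(s)+1})$, which is exactly Macdonald's classical principal specialization of $P_\lambda$; this fixes all conventions and shows that for $m=0$ nothing remains beyond the bookkeeping.

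The genuinely laborious step, and the main obstacle, is the combinatorial reconciliation of the two arm/leg conventions together with the exact accounting of the $t$-powers. The diagram of $\eta_{\Lambda,N}$ has the same underlying cells as $\Lambda^\circ$, the $m$ rows carrying $a_1,\dots,a_m$ playing the role of the circled rows of $\Lambda$; one must verify that the composition statistics of $\eta_{\Lambda,N}$ match $a(s),\ell(s),a'(s),\ell'(s)$ as defined before Example~\ref{exal} — in particular that the rule by which a terminal circle contributes to $a(s)$ and the circles at the bottom of a column contribute to $\ell(s)$ accounts for the shift from the ``$a(s)+1$'' of the composition denominator to the ``$a(s)$'' of the target. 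Simultaneously one collects the $t$-power from the non-symmetric evaluation, the explicit $t^{(N-m)(N-m-1)/2}$ and the factors $[n_\lambda(i)]_{t^{-1}}!$ in $u_{\Lambda,N}(t)$ (the latter encoding the stabilizer of $\lambda$ in $S_{N-m}$, which cancels the over-counting of the symmetrization), and the surviving $[N-m]_t!$; using $[k]_{t^{-1}}!=t^{-\binom{k}{2}}[k]_t!$ these should combine into the single monomial $t^{\,n(\Lambda)-\mathrm{coInv}(\pmb a)}$ and the quotient $[N-m]_t!/[N]_t!$, while the product becomes $\prod_{s\in\Lambda^\circ}(1-q^{a'(s)}t^{N-\ell'(s)})/(1-q^{a(s)}t^{\ell(s)+1})$. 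Once this dictionary between the diagram of $\eta_{\Lambda,N}$ and that of $\Lambda$ is set up, each factor is tracked individually and the identity follows.
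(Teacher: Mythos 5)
Your opening reduction is correct, and it is a genuinely different route from the paper's. The vanishing of the rational coefficient of $K_{i,i+1}-1$ in \eqref{eqTi} at $x_i=t^{i-1}$ does give $u_\emptyset(T_if)=t\,u_\emptyset(f)$, hence $u_\emptyset(T_\sigma f)=t^{\ell(\sigma)}u_\emptyset(f)$ and
\begin{equation*}
u_\emptyset(P_\Lambda)=\frac{[N-m]_t!}{u_{\Lambda,N}(t)}\,u_\emptyset\bigl(E_{\eta_{\Lambda,N}}\bigr),
\end{equation*}
a clean collapse of the symmetrization. The paper uses the very same vanishing observation but in a different way: it applies $u_\emptyset$ to the relation \eqref{eqPsiJ}, $\Psi_N J_\Lambda = t^{-\#\{2\le j\le m\,|\,a_j\le a_1\}}J_{\Lambda^\Box}$, where $\Lambda^\Box$ is an $(m-1)$-partition, obtaining the recursion \eqref{recursi} and proving the formula by induction on $m$ with the classical Macdonald evaluation as base case. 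That route needs no external input about non-symmetric evaluations; indeed the paper obtains the non-symmetric formula as an output (Corollary~\ref{eval Non}, the case $m=N$). Your route inverts the logical order: it takes the evaluation of $E_\eta$ from the literature as input and must then translate it into the paper's conventions.

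That translation is exactly where your proposal stops being a proof, and it is not routine. Everything after ``Next I would insert the known principal specialization'' is in the subjunctive: the composition statistics ``must be verified'' to match, the $t$-powers ``should combine'' into $t^{n(\Lambda)-{\rm coInv}(\pmb a)}$ --- but none of it is carried out. Concretely: (i) the two products do not run over the same index set --- the literature formula (Cherednik, \cite{Mar}) is a product over the $|\eta|$ cells of the composition diagram with Knop--Sahi-type arms and legs, while the target runs over $\Lambda^\circ$, which has $|\Lambda|+m$ cells, and the $m$ circle cells must be shown to produce precisely the factors that turn the stabilizer factorials in $u_{\Lambda,N}(t)$ into the $[N]_t!$ of the statement; (ii) the paper's leg convention, in which circles at the bottom of a column count or not depending on the relative labels of the circles, has no counterpart in the composition-statistics formula, so the cell-by-cell matching requires an actual argument, not just a bijection of boxes; (iii) the exponent bookkeeping must reconcile $n(\Lambda)=n(\Lambda^{(m)})$, ${\rm coInv}(\pmb a)$, the inversion-type exponent carried by the literature formula, and the powers of $t$ hidden in $\prod_i[n_\lambda(i)]_{t^{-1}}!\cdot t^{(N-m)(N-m-1)/2}$. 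Your $m=0$ consistency check exercises none of this, since there are no circles when $m=0$. That even the pure non-symmetric translation takes real work is visible in the paper itself: the proof of Corollary~\ref{eval Non} is devoted to exactly this kind of accounting (circles contributing $[N]_t$, $a$ versus $a'$, ${\rm Inv}$ versus ${\rm coInv}$). Note also that you cannot lean on Corollary~\ref{eval Non} itself, since in the paper it is deduced from Proposition~\ref{propeval}; you must use the independently established literature form, which is what forces the convention problem on you.

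If you want to keep your architecture and make it self-contained, your own side remark nearly closes the distance: $u_\emptyset(\Phi_qE_\eta)=u_\emptyset(E_\eta)$ gives $u_\emptyset(E_{\Phi\eta})=t^{N-r_\eta(1)}u_\emptyset(E_\eta)$, and applying $u_\emptyset$ to the exchange relations \eqref{property2} yields the full $q,t$-ratio $u_\emptyset(E_{s_i\eta})/u_\emptyset(E_\eta)$ in terms of $\delta_{i,\eta}$, not merely a $t$-power. Since $\Phi$ and the transpositions generate all compositions from $(0,\dots,0)$, this determines $u_\emptyset(E_{\eta_{\Lambda,N}})$ by induction directly in the paper's own conventions, after which your collapse formula finishes the proof without any dictionary between diagram conventions.
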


\begin{proof}
Define the operator
\begin{equation*}
    \Psi_N =(1-t)(1+T_{N-1}+T_{N-2}T_{N-1}+\cdots + T_{m} \cdots T_{N-1}) \Phi_q
\end{equation*}
where $\Phi_q$ was introduced in \eqref{eqPhi}.  The action of $\Psi_N$ on an $m$-Macdonald polynomial turns out to be quite simple \cite{L}:
\begin{equation} \label{eqPsiJ}
    \Psi_N J_{\Lambda}(x;q,t) = t^{-\#\{2 \leq j \leq m \, |\, a_j \leq a_1 \}}J_{\Lambda^{\Box}}(x,q,t)
\end{equation}
where $\Lambda^\Box=\bigl(a_2,\dots,a_m;\lambda \cup{(a_1+1)}\bigr)$,
and where the integral form of the $m$-Macdonald polynomials was introduced in Section~\ref{sec4}. Note that
the diagram of $\Lambda^\Box$ can be obtained from that of $\Lambda$ by 
transforming the $1$-circle into a square (and then relabeling the 
remaining circles so that they go from 1 to $m-1$ instead of from 2 to $m$).

  Using $(T_if)(1,\dots,t^{N-1})=t f(1,\dots,t^{N-1})$ for all $i=1,\dots,N-1$, and for all $f(x_1,\dots,x_N) \in \mathbb Q[x_1,\dots,x_N]$, we easily deduce that
$$
u_\emptyset(\Psi_N g)=(1-t)(1+t+\cdots+t^{N-m}) u_\emptyset (g)=(1-t^{N-m+1}) u_\emptyset (g)
$$
for all $g(x_1,\dots,x_N) \in \mathbb Q[x_1,\dots,x_N]$. Applying $u_{\emptyset}$ 
on both sides of \eqref{eqPsiJ} and using   $J_\Lambda(x;q,t)=c_\Lambda(q,t) P_\Lambda(x;q,t)$, we thus get that
\begin{equation} \label{recursi}
    u_{\emptyset}(P_{\Lambda}) = \dfrac{t^{-\#\{2 \leq j \leq m \, |\, a_j \leq a_1 \}}}{(1-t^{N-m+1}) } \dfrac{c_{\Lambda^{\Box}}(q,t)}{c_{\Lambda}(q,t)} u_{\emptyset}(P_{\Lambda^{\Box}})
\end{equation}
Since $\Lambda^\Box$ is an $(m-1)$-partition while $\Lambda$ is an $m$-partition, this recursion will allow us to prove the proposition by induction on $m$.

In the base case $m=0$, we have 
${\rm coInv}(\pmb a)=0$ and
$\Lambda=(;\lambda)=\Lambda^{(m)}=\Lambda^\circ$ can be identified with $\lambda$.  Hence the proposition simply becomes
\begin{equation*}
    u_{\emptyset}(P_{\lambda}(x;q,t)) = \displaystyle t^{{n}(\lambda)}  \prod_{s \in \lambda} \dfrac{(1-q^{{a}'(s)}t^{N-\ell'(s)})}{(1-q^{{a}(s)}t^{\ell(s)+1})}
\end{equation*}
which is the well-known evaluation of a Macdonald polynomial \cite{M}.

We will now see that the general case holds.  Let
$$
c'_\Lambda(q,t) = \prod_{s \in \Lambda^\circ} (1-q^{ a(s)}t^{\ell(s)+1})  \qquad {\rm and} \qquad
d_\Lambda(q,t) = \prod_{s \in \Lambda^\circ} (1-q^{a'(s)}t^{N-\ell'(s)}) 
$$
First observe that
$$
\frac{c_{\Lambda}(q,t)}{c'_{\Lambda}(q,t)} = \frac{c_{\Lambda^\Box}(q,t)}{(1-t)c'_{\Lambda^\Box}(q,t)} 
$$
since the 1-circle contributes in $c'_\Lambda(q,t)/c_\Lambda(q,t)$ while not in $c'_{\Lambda^\Box}(q,t)/c_{\Lambda^\Box}(q,t)$. Using $d_\Lambda(q,t)=d_{\Lambda^\Box}(q,t)$  and \eqref{recursi}, the 
 proposition will thus hold by induction if we can show that
\begin{equation*}
t^{{n}(\Lambda)-{\rm coInv}(\pmb a)} \dfrac{[N-m]_t!}{[N]_t!} \frac{1}{(1-t)}   = \dfrac{t^{-\#\{2 \leq j \leq m \, |\, a_j \leq a_1 \}}}{(1-t^{N-m+1}) }  t^{{n}(\Lambda^\Box)-{\rm coInv}(\pmb a')} \dfrac{[N-m+1]_t!}{[N]_t!}
\end{equation*}
where $\pmb a'=(a_2,\dots,a_m)$.  But this easily follows from
$n(\Lambda)=n(\Lambda^\Box)$,
$$
{\rm coInv}(\pmb a) = {\rm coInv}(\pmb a')+\#\{2 \leq j \leq m \, |\, a_j \leq a_1 \}
$$
and
$$
[N-m+1]_t! = \frac{(1-t^{N-m+1})}{(1-t)} [N-m]_t!
$$
\end{proof}
In  the special case $m=N$, our evaluation formula for the  $m$-symmetric Macdonald polynomials can be simplified.  It provides a reformulation of the principal specialization of the non-symmetric Macdonald polynomials which can be found for instance in \cite{Mar}.
\begin{corollary} \label{eval Non} The non-symmetric Macdonald polynomials are such that
\begin{equation*}
    E_{\eta}(1,t,\dots,t^{N-1};q,t) = \displaystyle t^{n(\eta^+)+{\rm Inv(\eta)}} \prod_{s \in \eta} \dfrac{(1-q^{{a}(s)}t^{N-\ell'(s)})}{(1-q^{{a}(s)}t^{\ell(s)+1})}
\end{equation*}
where we recall that $\eta^+$ is the partition obtained by reordering the entries of $\eta$.
\end{corollary}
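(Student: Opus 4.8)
The plan is to obtain the corollary as the special case $m=N$ of Proposition~\ref{propeval}. When $m=N$, the symmetric part $\lambda$ must be empty, so the $m$-partition is $\Lambda=(\pmb a;\emptyset)$ with $\pmb a=\eta$ a composition with $N$ parts, and by \eqref{eqPE} we have $P_{(\pmb a;\emptyset)}=E_\eta$. Thus $u_\emptyset(P_\Lambda)=E_\eta(1,t,\dots,t^{N-1};q,t)$, and the task reduces to showing that the right-hand side of the formula in Proposition~\ref{propeval} simplifies to the claimed expression in this case. First I would specialize the prefactor $[N-m]_t!/[N]_t!$: with $m=N$ this is $[0]_t!/[N]_t!=1/[N]_t!$, but since $\Lambda=(\eta;\emptyset)$ has no symmetric rows, the set $\Lambda^\circ$ of cells is just the diagram of $\eta$ (squares and circles), and one must check how the factorial normalization gets absorbed.

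The two genuine points to verify are the exponent of $t$ and the product. For the product, I would note that in the diagram of $(\eta;\emptyset)$ every row ends with a circle (there are no symmetric rows), so the two arm-lengths coincide on all squares, $a(s)=\tilde a(s)$, and the formula's numerator/denominator collapse onto the cells $s\in\eta$. The co-arm and co-leg $a'(s)=j-1$, $\ell'(s)=i-1$ are exactly the usual ones for the diagram of $\eta$, so the product $\prod_{s\in\eta}(1-q^{a(s)}t^{N-\ell'(s)})/(1-q^{a(s)}t^{\ell(s)+1})$ matches the corollary directly. The arithmetic identity to establish for the exponent is
$$
n(\Lambda)-{\rm coInv}(\pmb a) \;=\; n(\eta^+)+{\rm Inv}(\eta),
$$
absorbing whatever residual power of $t$ comes from $1/[N]_t!$.

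The main obstacle, and the step I expect to require real bookkeeping, is reconciling the exponent of $t$. Here $n(\Lambda)=n(\Lambda^{(m)})=n(\Lambda^{(N)})$, where $\Lambda^{(N)}=(\pmb a+1^N)\cup\emptyset$ is the partition obtained by adding one to every entry of $\eta$ and sorting; this is $\eta^++1^N$, whose $n(\cdot)$ differs from $n(\eta^+)$ in a controlled way. The plan is to expand $n(\eta^++1^N)=\sum_i(i-1)(\eta^+_i+1)=n(\eta^+)+\binom{N}{2}$, and then to use the definition ${\rm coInv}(\pmb a)=\binom{N}{2}-{\rm Inv}(\pmb a)$ to see that $n(\Lambda)-{\rm coInv}(\pmb a)=n(\eta^+)+\binom{N}{2}-\binom{N}{2}+{\rm Inv}(\eta)=n(\eta^+)+{\rm Inv}(\eta)$, as required; the factor $1/[N]_t!$ must be shown to cancel against a compensating power hidden in converting $n(\Lambda)$ through the circles, which is precisely where I would be most careful. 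Once the exponent identity and the product identification are in place, the corollary follows immediately by substituting $m=N$ into Proposition~\ref{propeval}.
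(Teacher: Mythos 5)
Your overall route is the same as the paper's: set $m=N$ in Proposition~\ref{propeval}, identify $P_{(\pmb a;\emptyset)}=E_\eta$ via \eqref{eqPE}, and your exponent bookkeeping is exactly right --- $n(\Lambda)=n(\eta^++1^N)=n(\eta^+)+N(N-1)/2$ together with ${\rm coInv}(\eta)=N(N-1)/2-{\rm Inv}(\eta)$ gives $n(\Lambda)-{\rm coInv}(\pmb a)=n(\eta^+)+{\rm Inv}(\eta)$ with no residue. But this very computation shows that your anticipated resolution of the $1/[N]_t!$ prefactor cannot work: there is no ``compensating power of $t$ hidden in converting $n(\Lambda)$ through the circles,'' since the exponent identity is exact. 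The factorial is cancelled inside the product, by the circle cells, and your plan never processes them: the product in Proposition~\ref{propeval} runs over $\Lambda^\circ$ (squares \emph{and} circles), so asserting that it ``collapses onto the cells $s\in\eta$'' skips precisely the two steps that constitute the content of the paper's proof. (Your remark that $a(s)=\tilde a(s)$ is true but irrelevant here, as $\tilde a$ appears in neither Proposition~\ref{propeval} nor the corollary.)

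Concretely, the numerator in Proposition~\ref{propeval} carries the co-arm $a'(s)$, not the arm $a(s)$ that appears in the corollary; your write-up silently swaps one for the other. The paper first proves
\begin{equation*}
\prod_{s\in\eta^\circ}(1-q^{a'(s)}t^{N-\ell'(s)})=\prod_{s\in\eta^\circ}(1-q^{a(s)}t^{N-\ell'(s)}),
\end{equation*}
which holds row by row: within a full row (its squares plus its circle) the co-arms $\{0,1,\dots,\eta^+_i\}$ and the arms $\{\eta^+_i,\dots,1,0\}$ form the same multiset, while $\ell'(s)=i-1$ is constant along the row. Only after this conversion do the circles become $q$-free: a circle has $a(s)=\ell(s)=0$, so the circle in row $i$ contributes $(1-t^{N-i+1})/(1-t)$, and since every row of $\eta$ ends in a circle the product over circles equals $[N]_t!$, which is what cancels the prefactor $1/[N]_t!$. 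Note that the order of these two steps matters: with the co-arm, the circle in row $i$ would contribute $1-q^{\eta^+_i}t^{N-i+1}$, which does not simplify to a pure $t$-factor. The remaining product over the squares $s\in\eta$ is then exactly the right-hand side of the corollary. As stated, your argument would not close without these two steps.
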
 
\begin{proof}
From \eqref{eqPE}, we have in the case $m=N$ that
$P_{(\pmb a;\emptyset)}(x;q,t)=E_{\eta}(x;q,t)$ with $\eta=(a_1,\dots,a_N)$. 
Using Proposition~\ref{propeval}, we thus get that
\begin{equation} \label{eqspecial}
    E_{\eta}(1,t,\dots,t^{N-1};q,t) = \displaystyle t^{n(\eta^+)+N(N-1)/2-{\rm coInv(\eta)}} \frac{1}{[N]_t!} \prod_{s \in \eta^\circ} \dfrac{(1-q^{{a}'(s)}t^{N-\ell'(s)})}{(1-q^{{a}(s)}t^{\ell(s)+1})}
\end{equation}
where we have used the fact that $n(\Lambda)=n(\eta^++1^N)=n(\eta^+)+N(N-1)/2$ since every row of $\eta$ ends with a circle.  It is straightforward to check that
$$
\prod_{s \in \eta^\circ} (1-q^{{a}'(s)}t^{N-\ell'(s)})= \prod_{s \in \eta^\circ} (1-q^{{a}(s)}t^{N-\ell'(s)})   
$$
Hence 
\begin{equation} \label{eqcircle}
 \prod_{s \in \eta^\circ} \dfrac{(1-q^{{a}'(s)}t^{N-\ell'(s)})}{(1-q^{{a}(s)}t^{\ell(s)+1})} =    \left[ \prod_{s \in \eta} \dfrac{(1-q^{{a}(s)}t^{N-\ell'(s)})}{(1-q^{{a}(s)}t^{\ell(s)+1})} \right]  \left[ \prod_{s \in \circ} \dfrac{(1-q^{{a}(s)}t^{N-\ell'(s)})}{(1-q^{{a}(s)}t^{\ell(s)+1})} \right] 
\end{equation}
where $\circ$ stands for the cells of the diagram of $\eta$ corresponding to circles.  Note that when $s$ is in the position of a circle, we have $a(s)=0$ and $\ell(s)=0$.  Because there is a circle in every row of $\eta$, we thus obtain
 $$
 \prod_{s \in \circ} \dfrac{(1-q^{{a}(s)}t^{N-\ell'(s)})}{(1-q^{{a}(s)}t^{\ell(s)+1})}=[N]_t
 $$
Using the previous result in \eqref{eqcircle}, the corollary follows immediately from \eqref{eqspecial} and the relation ${\rm Inv}(\eta)=N(N-1)/2-{\rm coInv}(\eta)$.
\end{proof}  

We now introduce an evaluation depending on an $m$-partition.  We will see that it satisfies a natural symmetry property.  First, to the $m$-partition $\Lambda=(a_1,\dots,a_m;\lambda)$, we associate the composition
$$
\gamma_\Lambda=(a_1,\dots,a_m,\lambda_1,\dots,\lambda_\ell,0^{N-m-\ell})
$$
Let $w$ be the minimal length permutation such that  $w\gamma_\Lambda$ is weakly decreasing.  If we forget about the extra zeroes, we thus have that
$w\gamma_\Lambda= \Lambda^{(0)}$,
where we recall that $\Lambda^{(0)}$ stands for the partition whose diagram is obtained from that of $\Lambda$ by removing all the circles.
 The evaluation $u_{\Lambda}$  is then defined on any $m$-symmetric function $f(x)$ as
\begin{equation} \label{eval}
    u_{\Lambda}\bigl(f(x_1,\dots,x_N)\bigr)=  f\left(q^{-\Lambda_{w(1)}^{(0)}}t^{w(1)-1},\dots,q^{-\Lambda_{w(N)}^{(0)}}t^{w(N)-1}\right)
\end{equation}
Observe that in the case $\Lambda = (0^m;\emptyset)$, $u_\Lambda$ corresponds to the principal evaluation $u_\emptyset$.

\begin{lemma}\label{AutVal} If $f$ is an $m$-symmetric function in $N$ variables then
\begin{equation} \label{aut1}
    f(Y^{-1})  P_{\Lambda}(x;q,t) = u_\Lambda(f)   P_{\Lambda}(x;q,t)
\end{equation}
\end{lemma}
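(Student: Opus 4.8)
The plan is to show that $P_\Lambda$ is a simultaneous eigenfunction of the operators $Y_1^{-1},\dots,Y_m^{-1}$ together with every symmetric function of $Y_{m+1}^{-1},\dots,Y_N^{-1}$, and that the eigenvalue of $f(Y^{-1})$ on $P_\Lambda$ is exactly $u_\Lambda(f)$. Since $f$ is $m$-symmetric, it can be written as $f=\sum_{\pmb b} x^{\pmb b}\, g_{\pmb b}(x_{m+1},\dots,x_N)$ with each $g_{\pmb b}$ symmetric, so that $f(Y^{-1})=\sum_{\pmb b} Y_1^{-b_1}\cdots Y_m^{-b_m}\, g_{\pmb b}(Y_{m+1}^{-1},\dots,Y_N^{-1})$. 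Because the $Y_i$ commute, it suffices to compute the eigenvalue of each non-symmetric generator $Y_i^{-1}$ and of each symmetric generator $g(Y_{m+1}^{-1},\dots,Y_N^{-1})$ separately, and then read off $u_\Lambda(f)$ as the corresponding sum of products.

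For the non-symmetric generators the eigenvalue is immediate from \eqref{defeigeni}: one has $Y_i^{-1}P_\Lambda=q^{-a_i}t^{r_\Lambda(i)-1}P_\Lambda$ for $i=1,\dots,m$. For the symmetric generators I would first note that any symmetric polynomial $g$ in $Y_{m+1},\dots,Y_N$ commutes with each $T_i$, $i=m+1,\dots,N-1$: the subalgebra of $s_i$-invariant polynomials in $Y_{m+1},\dots,Y_N$ is generated by $Y_i+Y_{i+1}$, $Y_iY_{i+1}$ and the remaining $Y_j$, each of which commutes with $T_i$ by \eqref{tsym1} and \eqref{TYi}; an identical argument applies to the inverses, using $Y_i^{-1}+Y_{i+1}^{-1}=(Y_i+Y_{i+1})(Y_iY_{i+1})^{-1}$ and $(Y_iY_{i+1})^{-1}$. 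Hence $g(Y_{m+1},\dots,Y_N)$ commutes with $\mathcal S^t_{m+1,N}$, and starting from $P_\Lambda=u_{\Lambda,N}(t)^{-1}\,\mathcal S^t_{m+1,N}E_{\eta_{\Lambda,N}}$ together with $Y_jE_{\eta_{\Lambda,N}}=\bar\eta_j E_{\eta_{\Lambda,N}}$ from \eqref{eigenvalY}, I can slide $g$ through the symmetrization to get $g(Y_{m+1}^{-1},\dots,Y_N^{-1})P_\Lambda=g\bigl(\bar\eta_{m+1}^{-1},\dots,\bar\eta_N^{-1}\bigr)P_\Lambda$, where $\eta=\eta_{\Lambda,N}$.

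It then remains to match these eigenvalues with $u_\Lambda$. Writing $w$ for the minimal-length permutation sorting $\gamma_\Lambda$ as in \eqref{eval}, so that $u_\Lambda(x_k)=q^{-\gamma_{\Lambda,k}}t^{w(k)-1}$, the statement reduces to two combinatorial identities: (i) $r_\Lambda(i)=w(i)$ for $i=1,\dots,m$, and (ii) the multiset $\{q^{-\gamma_{\Lambda,k}}t^{w(k)-1}\}_{k>m}$ equals $\{\bar\eta_j^{-1}\}_{j>m}=\{q^{-\eta_j}t^{r_\eta(j)-1}\}_{j>m}$. Both follow from the observation that the stable sort defining $w$ sends the entry of value $v$ in position $k$ of $\gamma_\Lambda$ to a row of size $v$ in $\Lambda^{(0)}=\eta^+$, in exactly the top-to-bottom order in which the circle-placement rule fills the rows of that size in the diagrams of $\Lambda$ and of $\eta$; since the labels $1,\dots,m$ are smaller than $m+1,\dots,N$ they always occupy the top rows of each given size, which gives (i) and pairs the symmetric rows correctly for (ii). Combining (i)--(ii) with the two eigenvalue computations yields $f(Y^{-1})P_\Lambda=u_\Lambda(f)P_\Lambda$.

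I expect the main obstacle to be the bookkeeping of the last paragraph: verifying that $w$ and the diagrammatic circle-placement assign the same row to each part, and in particular that $r_\Lambda(i)=r_\eta(i)$ for $i\le m$ despite the extra circles that $\eta$ carries in its symmetric positions. Once this dictionary between $w$, the rows $r_\Lambda$ and $r_\eta$, and the sorted shape $\Lambda^{(0)}$ is set up cleanly, the operator-theoretic steps are routine consequences of the commutation relations \eqref{tsym1}--\eqref{TYi} and of $P_\Lambda$ being a symmetrized common eigenfunction.
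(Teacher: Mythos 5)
Your proof is correct and follows essentially the same route as the paper's: both hinge on commuting the $m$-symmetric polynomial in the $Y_i^{-1}$'s past $\mathcal S^t_{m+1,N}$ (justified by \eqref{tsym1} and \eqref{TYi}), evaluating on the underlying non-symmetric Macdonald eigenfunction, and identifying the resulting specialization with $u_\Lambda$ through the minimal-length (stable-sort) permutation $w$. The only differences are organizational: the paper keeps $f$ whole and works with $E_{\gamma_\Lambda}$ via Remark~\ref{remarkgamma}, so the combinatorial matching is the single claim $r_{\gamma_\Lambda}(i)=w(i)$, whereas you decompose $f$, invoke \eqref{defeigeni} for $i\le m$, and work with $E_{\eta_{\Lambda,N}}$, which splits the same bookkeeping into your claims (i) and (ii).
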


\begin{proof}  Let $\eta=\gamma_{\Lambda}$.
We have that $Y_i^{-1}E_{\eta} = \bar \eta_i^{-1} E_{\eta}$,
where we recall that $\bar \eta_i= q^{\eta_i} t^{1-r_\eta(i)}$. Using the fact that $f$ is $m$-symmetric, we then have from Remark~\ref{remarkgamma} that
\begin{equation*}
\begin{array}{ll}
  f(Y_1^{-1},\dots,Y_N^{-1})   P_\Lambda(x;q,t) &= \displaystyle f(Y_1^{-1},\dots,Y_N^{-1})  d_\eta(q,t)  \mathcal S^t_{m+1,,N}  E_{\eta}  \\
&= \displaystyle   d_\eta(q,t)   \mathcal S^t_{m+1,,N} f(Y_1^{-1},\dots,Y_N^{-1}) E_{\eta}  \\
   &= \displaystyle   d_\eta(q,t)    \mathcal S^t_{m+1,,N} f(\bar \eta_1^{-1},\dots, \bar \eta_N^{-1}) E_{\eta}  \\
     &=   f(\bar \eta_1^{-1},\dots, \bar \eta_N^{-1})    P_\Lambda(x;q,t)
\end{array}
\end{equation*}
It thus only remains to show that the specialization
$x_i = \bar{\eta}^{-1}_i$ corresponds to the evaluation defined in \eqref{eval}.
Let $w$ be the minimal length permutation such that $w\eta=\Lambda^{(0)}$.
We have immediately that $\eta_i=\Lambda^{(0)}_{w(i)}$.  Hence, 
from the definition of $u_\Lambda$, we only need to show that
$r_\eta(i)=w(i)$.  But this is a consequence of the minimality of $w$. Indeed,
if $\eta_i=\eta_j$ and $i<j$ then the minimality of $w$ ensures that $w(i)<w(j)$, which implies that 
the circles increase from top to bottom in equal rows.
\end{proof}
The next proposition extends a well-known property of the Macdonald polynomials \cite{M}. Recall that $u_{\emptyset}( P_{\Lambda}(x,q,t))$ was given explicitly in Proposition~\ref{propeval}.
\begin{proposition}\label{SimMac} Let $\tilde  P_{\Lambda}(x,q,t)$ be the normalization of the 
 $m$-Macdonald polynomials given by
  $$
\tilde P_{\Lambda}(x,q,t) = \dfrac{ P_{\Lambda}(x;q,t)}{u_{\emptyset}( P_{\Lambda}(x,q,t))}
$$
Then, the following symmetry holds:
$$
u_\Omega(\tilde P_{\Lambda})=u_\Lambda (\tilde  P_{\Omega})
$$
\end{proposition}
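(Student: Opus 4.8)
The plan is to prove the symmetry $u_\Omega(\tilde P_\Lambda)=u_\Lambda(\tilde P_\Omega)$ by exploiting the self-adjointness of the Cherednik operators $Y_i$ together with Lemma~\ref{AutVal}, which already tells us that the $P_\Lambda$ are simultaneous eigenfunctions of the operators $f(Y^{-1})$ with eigenvalue $u_\Lambda(f)$. The key mechanism is that the evaluation $u_\Omega$ of an $m$-symmetric function can itself be realized as applying a suitable symmetric function in the $Y_i$. Specifically, for each $m$-partition $\Omega$ I would seek a (scalar-valued) pairing that is symmetric in $\Lambda$ and $\Omega$, and then extract the claimed identity by comparing two evaluations of it.

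\textbf{The main line of argument.} First I would observe, from the definition~\eqref{eval} of $u_\Omega$, that evaluating at the spectral point $x_i=q^{-\Omega^{(0)}_{w(i)}}t^{w(i)-1}$ is exactly the point at which the eigenvalue $\bar\eta_i^{-1}$ of $Y_i$ on $E_{\gamma_\Omega}$ is attained; this is precisely the content established in the proof of Lemma~\ref{AutVal}, where the specialization $x_i=\bar\eta_i^{-1}$ was identified with $u_\Omega$. The strategy is then to consider the quantity $u_\Omega\bigl(P_\Lambda(x;q,t)\bigr)$ and show it equals $u_\Lambda\bigl(P_\Omega(x;q,t)\bigr)$ up to the normalizing factors $u_\emptyset(P_\Lambda)$ and $u_\emptyset(P_\Omega)$. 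Concretely, I would try to produce a bilinear expression $B(\Lambda,\Omega)$, manifestly symmetric under swapping $\Lambda\leftrightarrow\Omega$, such that $B(\Lambda,\Omega)=u_\emptyset(P_\Omega)\,u_\Omega(\tilde P_\Lambda)$; dividing by $u_\emptyset(P_\Lambda)u_\emptyset(P_\Omega)$ then yields the result. The natural candidate for $B$ comes from the reproducing kernel / Cauchy-type identity advertised in the introduction (Corollary~\ref{coroKm}), whose two-variable kernel is symmetric in the $x$ and $y$ alphabets; specializing both alphabets to spectral points gives a symmetric double sum $\sum_\Lambda b_\Lambda P_\Lambda(\text{pt})P_\Lambda(\text{pt}')$ from which the pairwise symmetry of the normalized evaluations can be read off.

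\textbf{Executing via duality of $Y$.} A cleaner route, which I would pursue first, is purely operator-theoretic. Choose for $f$ a power-sum or elementary $m$-symmetric function and use Lemma~\ref{AutVal} to write $f(Y^{-1})P_\Lambda=u_\Lambda(f)P_\Lambda$. The idea is that the $Y_i$ admit an adjoint with respect to the scalar product $\langle\cdot,\cdot\rangle_m$, so that the matrix of $f(Y^{-1})$ in the $\{P_\Lambda\}$ basis is diagonal with entries $u_\Lambda(f)$, and the change-of-basis to the evaluation functionals is governed by the same spectral data. I would then evaluate $u_\Omega\bigl(f(Y^{-1})P_\Lambda\bigr)$ in two ways: on one hand it equals $u_\Lambda(f)\,u_\Omega(P_\Lambda)$, and on the other hand, because $f(Y^{-1})$ acts on the variables while $u_\Omega$ is a point-evaluation at the spectrum of $Y$, it equals $u_\Omega(f)\,u_\Omega(P_\Lambda)$ — this forces a compatibility that, run over a generating set of $f$'s, pins down $u_\Omega(P_\Lambda)/u_\emptyset(P_\Lambda)$ as a symmetric function of the two spectral labels. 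The bookkeeping identifying $u_\Omega(f)$ with the eigenvalue of $f(Y^{-1})$ read at the point $\gamma_\Omega$ is exactly the $r_\eta(i)=w(i)$ computation from Lemma~\ref{AutVal}, applied now with $\eta=\gamma_\Omega$.

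\textbf{Anticipated obstacle.} The delicate point, and where I expect the real work to lie, is the interplay between the nonsymmetric indices $\pmb a$ and the symmetric part $\lambda$: the evaluation point for $u_\Omega$ involves the minimal-length permutation $w$ sorting $\gamma_\Omega$, and one must check that the powers of $t$ and $q$ arising from the circles (encoded in $r_\Lambda(i)$, and in the $t^{n(\Lambda)-{\rm coInv}(\pmb a)}$ prefactor of Proposition~\ref{propeval}) recombine symmetrically. In particular the asymmetry between how the $i$-circles contribute to arm/leg-lengths ($a$ versus $\tilde a$, $\ell$ versus $\tilde\ell$) must wash out after normalization by $u_\emptyset$. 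The safest way to control this is to reduce, via the recursion~\eqref{eqPsiJ}–\eqref{recursi} used in Proposition~\ref{propeval}, to the classical symmetric case $m=0$, where $u_\Omega(\tilde P_\Lambda)=u_\Lambda(\tilde P_\Omega)$ is Macdonald's known symmetry \cite{M}; an induction on $m$ peeling off one circle at a time (passing from $\Lambda$ to $\Lambda^\Box$) should then close the argument, provided the recursion's $t$-power prefactor is verified to be symmetric in the pair $(\Lambda,\Omega)$.
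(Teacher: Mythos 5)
Your proposal circles the right ingredients (Lemma~\ref{AutVal}, a symmetric bilinear expression, division by principal specializations), but the one mechanism that makes the paper's proof work is missing, and the concrete computation you do give is false. The paper's proof rests on the pairing
$$
[f,g] := u_{\emptyset}\bigl( f(Y^{-1})\, g(x) \bigr),
$$
whose symmetry $[f,g]=[g,f]$ is a known fact about Cherednik operators quoted from \cite{Mac2}. With this, Lemma~\ref{AutVal} gives $[P_\Lambda,P_\Omega]=u_\Omega(P_\Lambda)\,u_\emptyset(P_\Omega)$, and the symmetry of $[\cdot,\cdot]$ yields $u_\Omega(P_\Lambda)\,u_\emptyset(P_\Omega)=u_\Lambda(P_\Omega)\,u_\emptyset(P_\Lambda)$, which is the proposition after dividing by $u_\emptyset(P_\Lambda)u_\emptyset(P_\Omega)$. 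Note the crucial structure: the operator is built from one polynomial, it acts on the other, and the result is evaluated at the \emph{principal} specialization $u_\emptyset$; the symmetry of this seemingly asymmetric expression is the nontrivial input, and nothing in your proposal supplies it or a substitute for it.

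Your ``two ways'' evaluation is where the argument actually breaks: you claim that $u_\Omega\bigl(f(Y^{-1})P_\Lambda\bigr)$ equals both $u_\Lambda(f)\,u_\Omega(P_\Lambda)$ and $u_\Omega(f)\,u_\Omega(P_\Lambda)$. The first equality is correct (Lemma~\ref{AutVal}), but the second is false: a point evaluation does not commute with the $q$-difference operator $f(Y^{-1})$, even when the point is a spectral point of $Y$. If both equalities held, then for every $m$-symmetric $f$ one would have $u_\Lambda(f)\,u_\Omega(P_\Lambda)=u_\Omega(f)\,u_\Omega(P_\Lambda)$; since the eigenvalues separate $m$-partitions, this would force $u_\Omega(P_\Lambda)=0$ whenever $\Omega\neq\Lambda$, which is absurd (already for $m=0$, $P_{(1)}=p_1$ does not vanish at the spectral points of other partitions). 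Your two fallback routes do not close the gap either: specializing the kernel $K_m(x,y)$ at two spectral points produces a quantity symmetric in $\Lambda$ and $\Omega$ as a whole, but one cannot ``read off'' the termwise identity $u_\Omega(\tilde P_\Lambda)=u_\Lambda(\tilde P_\Omega)$ from a symmetric double sum without an extraction argument you do not provide; and the proposed induction on $m$ via \eqref{recursi} would require knowing how the general evaluations $u_\Omega$ (not just $u_\emptyset$) interact with the operator $\Psi_N$, which is precisely the kind of information the missing pairing encodes.
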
 
\begin{proof}  For $f(x)$ and $g(x)$ Laurent polynomials in $x_1,\dots,x_N$, it is known \cite{Mac2} that the pairing
  $$
[f(x),g(x)] := u_{\emptyset} \left( f(Y^{-1}) g(x)  \right)
  $$
is such that $[f,g]=[g,f]$.  From Lemma~\ref{AutVal}, we thus get
\begin{align*}
    \left[{ P}_{\Lambda}(x_{(N)},q,t), { P}_{\Omega}(x_{(N)},q,t) \right] &= u_{\emptyset}\bigl({ P}_{\Lambda}(Y^{-1}_i) { P}_{\Omega}(x_{(N)},q,t)\bigr) \\
    &= u_\Omega({ P}_{\Lambda}(x_{(N)},q,t)) u_{\emptyset}\bigl( { P}_{\Omega}(x_{(N)},q,t)\bigr)
\end{align*}
From the symmetry of the pairing $[\cdot,\cdot]$, it then follows that
$$
u_\Omega({ P}_{\Lambda}(x_{(N)},q,t)) u_{\emptyset}\bigl( { P}_{\Omega}(x_{(N)},q,t)\bigr)=
 u_\Lambda({ P}_{\Omega}(x_{(N)},q,t)) u_{\emptyset}\bigl({ P}_{\Lambda}(x_{(N)},q,t)\bigr)
 $$
which proves the proposition. 
\end{proof}

The final result of this section is concerned with the behavior of $P_\Lambda(x;q,t)$ when $q$ and $t$ are sent to $q^{-1}$ and $t^{-1}$.   For $\sigma \in S_N$ with a reduced decomposition $s_{i_1} \cdots s_{i_r}$, we let $K_\sigma= K_{i_1,i_1+1} \cdots K_{i_r,i_r+1}$, and $T_\sigma=T_{i_1} \cdots T_{i_r}$. 
We also let $\omega_m=[m,m-1,\dots,1]$ be the longest permutation in the symmetric group $S_m$ (which we consider as the element $[m,\dots,1,m+1,\dots,N]$ of $S_N$), and denote the inverse of $T_{\omega_m}$ by $\bar T_{\omega_m}$.
\begin{proposition} \label{propoinv} We have that
$$
q^{|\pmb a| } t^{{\rm Inv}(\pmb a)} P_\Lambda(x;q^{-1},t^{-1}) = t^{{m \choose 2}} \tau_1 \cdots \tau_m K_{\omega_m}  \bar T_{\omega_m} P_\Lambda(x;q,t)
$$
or, equivalently, that
$$
q^{|\pmb a| } t^{{\rm Inv}(\pmb a)} P_\Lambda(x_m q^{-1},\dots,x_1 q^{-1},x_{m+1},x_{m+2},\dots;q^{-1},t^{-1}) = t^{{m \choose 2}}   \bar T_{\omega_m} P_\Lambda(x;q,t) 
$$
\end{proposition}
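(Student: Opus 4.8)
The plan is to deduce the statement from its two extreme cases. For $m=0$ it is the classical symmetry $P_\lambda(x;q^{-1},t^{-1})=P_\lambda(x;q,t)$ of the ordinary Macdonald polynomials, while for $m=N$ it becomes, by \eqref{eqPE}, the inversion identity for the nonsymmetric Macdonald polynomials:
\begin{equation*}
q^{|\eta|}t^{\mathrm{Inv}(\eta)}E_\eta(x;q^{-1},t^{-1}) = t^{\binom{N}{2}}\,\tau_1\cdots\tau_N\,K_{\omega_N}\,\bar T_{\omega_N}\,E_\eta(x;q,t).
\end{equation*}
Call this identity $(\star)$. I would first establish $(\star)$ and then descend to general $m$ through the symmetrization $P_\Lambda=d_\gamma(q,t)\,\mathcal{S}^t_{m+1,N}E_\gamma$ of Remark~\ref{remarkgamma}. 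I will work throughout with the second (equivalent) form of the proposition, whose left-hand side is the substitution $x_i\mapsto q^{-1}x_{m+1-i}$ ($i\le m$) applied to $P_\Lambda(\cdot\,;q^{-1},t^{-1})$; the first form is recovered by applying the inverse substitution, whose operator is $\tau_1\cdots\tau_m K_{\omega_m}$.

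To prove $(\star)$ I would show that its right-hand side is a joint eigenfunction of the Cherednik operators built from $q^{-1},t^{-1}$. The crux is the conjugation identity
\begin{equation*}
Y_i(q^{-1},t^{-1})\,\tau_1\cdots\tau_N K_{\omega_N}\bar T_{\omega_N} = \tau_1\cdots\tau_N K_{\omega_N}\bar T_{\omega_N}\,Y_i(q,t)^{-1},\qquad i=1,\dots,N,
\end{equation*}
which I would verify from the definitions of $T_i,\tau_i,\omega$ using the elementary relation $K_{i,i+1}\bar T_iK_{i,i+1}=\left.T_i\right|_{t\to t^{-1}}$ and the fact that conjugation by $K_{\omega_N}$ reverses the indices of both the generators and the variables. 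Granting it, the right-hand side of $(\star)$ is annihilated by $Y_i(q^{-1},t^{-1})-\bar\eta_i^{-1}$ for every $i$ (because $Y_i(q,t)^{-1}E_\eta=\bar\eta_i^{-1}E_\eta$ by \eqref{eigenvalY}, and $\bar\eta_i^{-1}$ is precisely the $Y_i(q^{-1},t^{-1})$-eigenvalue of $E_\eta(x;q^{-1},t^{-1})$); since the joint eigenvalues determine $\eta$, it is proportional to $E_\eta(x;q^{-1},t^{-1})$. The proportionality constant is then fixed by homogeneity — $\tau_1\cdots\tau_N$ multiplies a homogeneous polynomial of degree $|\eta|$ by $q^{|\eta|}$ — together with the comparison of a single extreme coefficient, yielding the factors in $(\star)$.

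For the descent I would insert $P_\Lambda=d_\gamma(q,t)\,\mathcal{S}^t_{m+1,N}E_\gamma$ with $\gamma$ chosen so that $(\gamma_{m+1},\dots,\gamma_N)$ is weakly decreasing. The operator $\bar T_{\omega_m}$ uses only $T_1,\dots,T_{m-1}$ and the substitution $x_i\mapsto q^{-1}x_{m+1-i}$ only the variables $x_1,\dots,x_m$, so both commute with $\mathcal{S}^t_{m+1,N}$ (since $T_{m-1}$ and $T_{m+1}$ commute); the same holds for $\mathcal{S}^{t^{-1}}_{m+1,N}$ on the inverted side. After replacing $E_\gamma$ by its $(\star)$-image, the proposition reduces to a single operator identity comparing $\mathcal{S}^{t^{-1}}_{m+1,N}\,\tau_1\cdots\tau_N K_{\omega_N}\bar T_{\omega_N}$ with $\mathcal{S}^t_{m+1,N}\,\bar T_{\omega_m}$ up to an explicit scalar. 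I would prove it using a reduced word for $\omega_N$ adapted to the parabolic decomposition of $\omega_N$ into the two block-longest elements $\omega_m$ and $\omega_{[m+1,N]}$ and the block-swap $c$, collapsing the block-two factor $\bar T_{\omega_{[m+1,N]}}$ against the symmetrizer into $t^{-\binom{N-m}{2}}$ via \eqref{TionSymt} and \eqref{eqLR}, using the stability of $E_\gamma$ to absorb the block-swap and the shifts $\tau_{m+1},\dots,\tau_N$, and reconciling $\mathcal{S}^{t^{-1}}$ with $\mathcal{S}^t$ by conjugating with the block-two reversal through $K_{i,i+1}\bar T_iK_{i,i+1}=\left.T_i\right|_{t\to t^{-1}}$.

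The \emph{main obstacle} is precisely this last operator identity and the scalar bookkeeping attached to it: one must control the commutations among the shifts $\tau_{m+1},\dots,\tau_N$, the crossing Hecke factors, and the symmetrizer, and then verify that the accumulated powers of $q$ and $t$ — coming from $|\gamma|-|\pmb a|=|\lambda|$, from $\mathrm{Inv}(\gamma)-\mathrm{Inv}(\pmb a)$, from $\binom{N}{2}=\binom{m}{2}+\binom{N-m}{2}+m(N-m)$, and from the ratio $d_\gamma(q,t)/d_\gamma(q^{-1},t^{-1})$ — collapse exactly to the factors $q^{|\pmb a|}t^{\mathrm{Inv}(\pmb a)}$ and $t^{\binom{m}{2}}$ in the statement. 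A reassuring check at each stage is that the construction must specialize to the classical symmetry at $m=0$ and to $(\star)$ at $m=N$.
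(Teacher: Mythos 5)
Your first half is sound but is not where the work lies: identity $(\star)$ is the known inversion formula \eqref{relNS} augmented by the $q$-shifts (the paper's own remark notes that the $m=N$ case of the proposition is exactly this), and your conjugation identity is precisely the $m=N$ instance of the intertwining relations \eqref{eqinv1} that the paper proves in Appendix~\ref{appC}. The genuine gap is the descent from $m=N$ to general $m$, which you yourself flag as the ``main obstacle'' but never resolve --- and it is not bookkeeping. Inserting $P_\Lambda(x;q,t)=d_\gamma(q,t)\,\mathcal S^t_{m+1,N}E_\gamma(x;q,t)$ and $P_\Lambda(x;q^{-1},t^{-1})=d_\gamma(q^{-1},t^{-1})\,\mathcal S^{t^{-1}}_{m+1,N}E_\gamma(x;q^{-1},t^{-1})$ and then substituting $(\star)$, what must be proved (after cancelling $\tau_1\cdots\tau_m$ from both sides) is an identity of the shape
\begin{equation*}
C_1(q,t)\;\mathcal S^{t^{-1}}_{m+1,N}\,\tau_{m+1}\cdots\tau_N\,K_{\omega_N}\,\bar T_{\omega_N}\,E_\gamma(x;q,t)\;=\;C_2(q,t)\;K_{\omega_m}\,\bar T_{\omega_m}\,\mathcal S^{t}_{m+1,N}\,E_\gamma(x;q,t),
\end{equation*}
and none of the tools you invoke controls it. (i) This cannot be an operator identity --- it can only hold on $E_\gamma$ --- so the claimed reduction ``to a single operator identity'' is already inaccurate. (ii) The collapse of $\bar T_{\omega_{[m+1,N]}}$ against the symmetrizer via \eqref{TionSymt} is unavailable where you need it: the symmetrizer adjacent to the full-rank conjugator is $\mathcal S^{t^{-1}}_{m+1,N}$, which absorbs the generators $T_j\vert_{t\to t^{-1}}=K_{j,j+1}\bar T_jK_{j,j+1}$ and has no eigen-relation with the factors $\bar T_j$ occurring in $\bar T_{\omega_N}$; \eqref{TionSymt} pairs $\mathcal S^t$ with $T_j$ (equivalently with $\bar T_j$), not $\mathcal S^{t^{-1}}$ with $\bar T_j$. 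Even after the reconciliation you mention (conjugating $\mathcal S^{t^{-1}}_{m+1,N}$ by the block reversal, which yields $t^{-{N-m\choose 2}}K_{\omega_{[m+1,N]}}\mathcal S^t_{m+1,N}K_{\omega_{[m+1,N]}}$), the new reversal operators must still be commuted through the shifts, $K_{\omega_N}$ and $\bar T_{\omega_N}$. (iii) Most seriously, in any parabolic factorization $\bar T_{\omega_N}=\bar T_{\omega_{[m+1,N]}}\bar T_{\omega_m}\bar T_{c}$ with $\ell(c)=m(N-m)$, one is left with the block-crossing factor $\bar T_{c}E_\gamma$ and the residual shifts $\tau_{m+1}\cdots\tau_N$; by \eqref{property2}, $\bar T_cE_\gamma$ expands into a large, uncontrolled linear combination of $E_\nu$'s, and the stability property \eqref{property1} --- which concerns setting $x_N=0$ --- provides no mechanism whatsoever for ``absorbing'' either it or the shifts. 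Your proposal therefore stops exactly where the proof has to start: the displayed identity is of the same order of difficulty as the proposition itself.

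For contrast, the paper never descends from $m=N$: it runs the eigenfunction argument directly at level $m$. Appendix~\ref{appC} shows that $\tau_1\cdots\tau_mK_{\omega_m}\bar T_{\omega_m}$ intertwines $\bar Y_i$ with $Y_i^\star$ for $i=1,\dots,m$ and $\bar D$ with $D^\star$ on $R_m$, whence both sides of the proposition are joint eigenfunctions of $(Y_1^\star,\dots,Y_m^\star,D^\star)$ with equal eigenvalues; since this joint spectrum separates $m$-partitions, the two sides are proportional, and the constant follows from the triangularity $K_{\omega_m}\bar T_{\omega_m}x^{\pmb a}=t^{\mathrm{Inv}(\pmb a)-{m\choose 2}}x^{\pmb a}+\text{lower terms}$ of \cite{Mar}. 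Note that the symmetrized operator $D$ is indispensable in that argument: by \eqref{defeigeni} the eigenvalues of $Y_1,\dots,Y_m$ alone do not determine $\Lambda$ (for instance $(2;(1))$ and $(2;(1,1))$ share the same $Y_1$-eigenvalue $q^2$), so any level-$m$ eigenfunction argument needs $D$; and any honest completion of your descent would, in effect, have to reprove the level-$m$ intertwining relations of Appendix~\ref{appC}, at which point the detour through $m=N$ buys nothing.
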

\begin{remark}
The proposition is an extension of a similar result on non-symmetric polynomials (see Lemma 2.3 a) in \cite{Mar}) which states that
\begin{equation} \label{relNS}
t^{{\rm Inv} ( \eta)} E_\eta(x_m,\dots,x_1;q^{-1},t^{-1})= {{t^{{m\choose 2} }}}  \bar T_{{\mathbf \omega}_m} E_\eta(x_1,\dots,x_m;q,t)
\end{equation}
This relation was also proven in a broader context in  \cite{A} in connection with permuted-basement Macdonald polynomials  \cite{HHL}. When the number of variables $N$ is equal to $m$ and $\Lambda=(\eta;\emptyset)$, Proposition~\ref{propoinv} becomes \eqref{relNS} (the $q$ powers canceling from the homogeneity of $E_\eta$). But when the number of variables is larger than $m$ and $\Lambda=(\eta;\emptyset)$, Proposition~\ref{propoinv} is actually stronger than \eqref{relNS} since it says that for any non-symmetric Macdonald polynomial such that $\ell(\eta) \leq m$ we have 
$$
q^{|\eta| } t^{{\rm Inv}(\eta)} E_\eta(x_m q^{-1},\dots,x_1 q^{-1},x_{m+1},x_{m+2},\dots;q^{-1},t^{-1}) = t^{{m \choose 2}}   \bar T_{\omega_m} E_\eta (x;q,t) 
$$
\end{remark}
\begin{proof}[Proof of Proposition~\ref{propoinv}] It suffices to prove the result in $N$ variables.  
Let $Y_i^\star$ be the Cherednik operator $Y_i$ with parameters $q^{-1}$ and $t^{-1}$ instead of $q$ and $t$, and similarly for the operator $D^\star$. We thus have from \eqref{defeigeni} and \eqref{defeigenD} that
$$
Y_i^\star \, P_\Lambda(x;q^{-1},t^{-1})= \varepsilon_\Lambda^{(i)}(q^{-1},t^{-1}) \, P_\Lambda(x;q^{-1},t^{-1}) \qquad i=1,\dots,m 
$$
and
$$
D^\star \, P_\Lambda(x;q^{-1},t^{-1})= \varepsilon_\Lambda^{D}(q^{-1},t^{-1}) \, P_\Lambda(x;q^{-1},t^{-1})
$$
The main part of the proof thus consists in proving that $\tau_1 \cdots \tau_m K_{\omega_m}  \bar T_{\omega_m} P_\Lambda(x;q,t)$ is an eigenfunction of $Y_1^\star,\dots,Y_m^\star$ and $D^\star$ with the right eigenvalues. In order to achieve this, we prove that for any $f \in R_m$ we have
\begin{equation} \label{eqinv1}
Y_i^\star \left( \tau_1 \cdots\tau_m K_{\omega_m} \bar T_{\omega_m} \right) f= \left(\tau_1 \cdots \tau_m  K_{\omega_m} \bar T_{\omega_m} \right) \bar Y_i f \qquad i=1,\dots,m 
\end{equation}
and
\begin{equation} \label{eqinv2}
D^\star  \left( \tau_1 \cdots \tau_m K_{\omega_m} \bar T_{\omega_m} \right) f=  \left( \tau_1 \cdots \tau_m K_{\omega_m} \bar T_{\omega_m} \right) \bar D f 
\end{equation}
where $\bar D=\bar Y_{m+1}+\cdots +\bar Y_N + \sum_{i=m+1}^N t^{i-1}$. It is then immediate that 
\begin{equation*}
\begin{split}
Y_i^\star \left( \tau_1 \cdots \tau_m K_{\omega_m} \bar T_{\omega_m} \right)  P_\Lambda(x;q,t) & = \left(\tau_1 \cdots \tau_m  K_{\omega_m} \bar T_{\omega_m} \right) \bar Y_i  P_\Lambda(x;q,t)\\
& = \varepsilon_\Lambda^{(i)}(q^{-1},t^{-1})  \left(\tau_1 \cdots \tau_m  K_{\omega_m} \bar T_{\omega_m} \right)  P_\Lambda(x;q,t)  \qquad i=1,\dots,m 
\end{split}
\end{equation*}
and
\begin{equation*}
\begin{split}
D^\star  \left( \tau_1 \cdots \tau_m K_{\omega_m} \bar T_{\omega_m} \right)  P_\Lambda(x;q,t) & =  \left( \tau_1 \cdots \tau_m K_{\omega_m} \bar T_{\omega_m} \right) \bar D  P_\Lambda(x;q,t) \\
&=  \varepsilon_\Lambda^{D}(q^{-1},t^{-1})  \left( \tau_1 \cdots \tau_m K_{\omega_m} \bar T_{\omega_m} \right)   P_\Lambda(x;q,t) 
\end{split}
\end{equation*} 
as wanted. The proof of \eqref{eqinv1} and \eqref{eqinv2}, which is somewhat technical, is provided in Appendix~\ref{appC}.

We have thus proven that $\tau_1 \cdots \tau_m K_{\omega_m}  \bar T_{\omega_m} P_\Lambda(x;q,t)$ is equal to $P_\Lambda(x;q^{-1},t^{-1})$ up to a constant. Hence, we have left to prove that the proportionality constant corresponds to the powers of $q$ and $t$ in the statement of the proposition.
In the proof of Lemma 2.3 a) in \cite{Mar}, it is shown that
$$
 K_{\omega_m}  \bar T_{\omega_m} x^{\pmb a} = t^{{\rm Inv} ( \pmb a)-{{m\choose 2}}} x^{\pmb a} + {\rm smaller~terms}
$$
where the order is the order on compositions defined in Section~\ref{sec2}. We thus deduce straightforwardly that the action of $\tau_1 \cdots \tau_m K_{\omega_m}  \bar T_{\omega_m}$
on the dominant term $m_\Lambda=x^{\pmb a} m_\lambda(x_{m+1},\dots,x_N)$ of $P_\Lambda(x;q^{-1},t^{-1})$ is such that
$$
\tau_1 \cdots \tau_m K_{\omega_m}  \bar T_{\omega_m} m_\Lambda = q^{|\pmb a|} t^{{\rm Inv} ( \pmb a)-{{m\choose 2}}}m_\Lambda + {\rm smaller~terms}
$$   
where the smaller terms are of the form $x^{\pmb b} m_\lambda(x_{m+1},\dots,x_N)$ with $\pmb b$ smaller than $\pmb a$.
This concludes the proof of the proposition.
\end{proof}

\section{Orthogonality} \label{sec6}

Recall that ${\rm Inv}(\pmb a)$ is the number of inversions in $\pmb a$, and let
 $|\pmb a|=a_1+\cdots +a_m$. The following scalar product in $R_m$ is defined on the $t$-deformation of the $m$-symmetric power sums introduced in \eqref{eqplambda}:
\begin{equation} \label{eqscal}
\langle p_\Lambda(x;t)\, ,\, p_\Omega(x;t)  \rangle_m =\delta_{\Lambda \Omega} \, q^{|\pmb a|}t^{{\rm Inv} (\pmb a)} z_\lambda(q,t)
\end{equation}
where
$$
z_\lambda(q,t) = z_\lambda \prod_{i=1}^{\ell(\lambda)} \frac{1-q^{\lambda_i}}{1-t^{\lambda_i}}
$$
with $z_\lambda= \prod_{i \geq 1 } i^{n_\lambda(i)} \cdot n_\lambda(i)!$ (recall that $n_\lambda(i)$ is the number of occurrences of $i$ in $\lambda$). Observe that
when $m=0$, this corresponds to the usual Macdonald polynomial scalar product \cite{M}. 

The main goal of this section is to show that the $m$-symmetric Macdonald polynomials are orthogonal with respect to the scalar product \eqref{eqscal} and to provide the value of the squared norm $ \|P_\Lambda(x;q,t) \|^2$.
But before proving the theorem, we need to establish a few results.

The Macdonald polynomial reproducing kernel 
$$
K_0(x,y)= \prod_{i,j} \frac{(tx_i y_j;q)_{\infty}}{(x_i y_j;q)_{\infty}} \qquad {\rm with}\quad  (a;q)_\infty=\prod_{i=1}^{\infty} (1-aq^{i-1})
$$
is such that  \cite{M}
 \begin{equation} \label{eqkernel0}
K_0(x,y)=\sum_{\lambda}  z_{\lambda}(q,t)^{-1}  p_{\lambda} (x) p_{\lambda} (y)
\end{equation}
Recalling that $\omega_m$ was introduced a the end of the previous section, we define
\begin{equation} 
K_m(x,y)=  {{t^{-{m\choose 2} }}} K_0(x,y)  T^{(x)}_{{\mathbf \omega}_m} \left[ \frac{\prod_{i+j \leq m} (1-tq^{-1} x_i y_j)}{\prod_{i+j \leq m+1}(1-q^{-1} x_i y_j)} \right]
  \end{equation}
where the superscript $(x)$ indicates that the Hecke algebra operator  $T^{(x)}_{{\mathbf \omega}_m}$ acts on the $x$ variables.  We will see later in Proposition~\ref{propokernel} that
$K_m(x,y)$ is a reproducing kernel for the scalar product \eqref{eqscal}.

We first show that the eigenoperators $Y_1,\dots,Y_m,D$ are symmetric with respect to $K_m(x,y)$ when the number of variables is finite. In order not to disrupt the flow of the presentation, the proof will be relegated to Appendix~\ref{appB}.
\begin{proposition} \label{propsymfinite}  For $x_{(N)}=(x_1,\dots,x_N)$ and $y_{(N)}=(y_1,\dots,y_N)$, we have that
  $$
  Y_i^{(x)} K_m(x_{(N)},y_{(N)}) =  Y_i^{(y)} K_m(x_{(N)},y_{(N)}) 
  $$
  for $i=1,\dots,m$,
  and
  $$
D^{(x)} K_m(x_{(N)},y_{(N)}) =  D^{(y)} K_m(x_{(N)},y_{(N)})
  $$
\end{proposition}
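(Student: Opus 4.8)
The plan is to establish separately the $m$ identities $Y_i^{(x)}K_m=Y_i^{(y)}K_m$ (for $i=1,\dots,m$) and the single identity $D^{(x)}K_m=D^{(y)}K_m$. Throughout write $K_m=t^{-{m\choose 2}}K_0(x,y)\,G(x,y)$, where $G=T^{(x)}_{\omega_m}\bigl[F\bigr]$ and $F$ is the rational factor appearing in the definition of $K_m$; call an operator $A$ \emph{symmetric with respect to a kernel} $\mathcal K$ when $A^{(x)}\mathcal K=A^{(y)}\mathcal K$. A first useful observation is that, since the constraints $i+j\le m$ and $i+j\le m+1$ force $i,j\le m$, both $F$ and hence $G$ depend only on $x_1,\dots,x_m$ and $y_1,\dots,y_m$: the correction is \emph{local} to the first $m$ variables, and in particular $G$ is independent of the tail variables $x_{m+1},\dots,x_N$ and $y_{m+1},\dots,y_N$. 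The whole point of the proof is to show that this local factor $G$ compensates exactly for the failure of the individual (non-symmetric) operators $Y_i$ to be symmetric across $x$ and $y$ on the bare kernel $K_0$.

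First I would record the base symmetry of $K_0$. By \eqref{eqkernel0}, $K_0=\sum_\lambda z_\lambda(q,t)^{-1}p_\lambda(x)p_\lambda(y)$ is the Macdonald reproducing kernel, so it can be written as $\sum_\lambda P_\lambda(x)Q_\lambda(y)$ in terms of the Macdonald basis and its dual; since every \emph{symmetric} function $A$ of $Y_1,\dots,Y_N$ acts diagonally on the $P_\lambda$ (and hence on the $Q_\lambda$), it follows at once that $A^{(x)}K_0=A^{(y)}K_0$. This is the classical self-adjointness of the Macdonald operators \cite{M}. In particular $\bigl(\sum_{i=1}^N Y_i\bigr)^{(x)}K_0=\bigl(\sum_{i=1}^N Y_i\bigr)^{(y)}K_0$. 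When $m=0$ the factor $G$ is trivial and $D=\sum_{i=1}^N Y_i-\sum_{i=1}^N t^{1-i}$, so the $D$-statement is exactly this base symmetry and there are no $Y_i$-statements to prove.

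For $m\ge1$ I would argue by induction on $m$, exploiting the reduced decomposition $\omega_m=\omega_{m-1}(s_{m-1}\cdots s_1)$, that is $T_{\omega_m}=T_{\omega_{m-1}}T_{m-1}\cdots T_1$, in order to peel the outer Hecke generators off $G$ and relate $F$ to its $(m-1)$-analogue; the discrepancy between the two consists precisely of the boundary factors $i+j=m$ in the numerator and $i+j=m+1$ in the denominator. Using the commutation relations \eqref{tsym1} and \eqref{TYi} together with $\omega T_i=T_{i-1}\omega$, I would push each $Y_i^{(x)}$ through the Hecke factor $T^{(x)}_{\omega_m}$, converting its $x$-action into the corresponding $y$-action up to correction terms produced by the $(t-1)$ tails in \eqref{tsym1}; the claim is that these correction terms are annihilated once the newly added boundary factors of $F$ are taken into account. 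The $D$-statement is handled by the same intertwining, made simpler by the fact that the tail sum $\sum_{i=m+1}^N Y_i$ commutes with $T_{m+1},\dots,T_{N-1}$ (by \eqref{TYi}) and is therefore symmetric in exactly the variables on which $G$ does not depend.

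The main obstacle will be this final cancellation. Inside each $Y_i$ (and inside $\omega$) sit the $q$-shift operators $\tau_j$, and the crux is to check that their action on the simple poles $1/(1-q^{-1}x_iy_j)$ and zeros $(1-tq^{-1}x_iy_j)$ of $F$ produces exactly the residues needed to match the $x$- and $y$-sides. This is a bookkeeping of rational functions in which the precise index ranges $i+j\le m$ and $i+j\le m+1$ are indispensable, and in which one must keep careful track of the fact that $T^{(x)}_{\omega_m}$ acts on the $x$-variables only. Two independent checks help organize and verify the computation: specializing to $N=m$ should reduce the identity to the self-duality of the non-symmetric Macdonald reproducing kernel of \cite{MN} (cf.\ Remark~\ref{remarkcauchy}), and, because of \eqref{eqkernel0}, one may alternatively test the two identities against the power-sum expansion of $K_0$, reducing them to identities among the scalars $z_\lambda(q,t)$ and the Cherednik eigenvalues.
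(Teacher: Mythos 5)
Your proposal is a plan rather than a proof: the step you yourself flag as ``the main obstacle'' --- the cancellation of the correction terms produced when $Y_i^{(x)}$ is pushed through $T^{(x)}_{\omega_m}$ against the boundary factors of $F$ --- is precisely the entire content of the proposition, and it is never carried out. Worse, the mechanism you propose cannot see where the real difficulty sits. Each Cherednik operator $Y_i$ contains $\omega=K_{N-1,N}\cdots K_{1,2}\,\tau_1$, which acts on \emph{all} $N$ variables (including a $q$-shift of $x_1$); so neither the locality of $G$ in $x_1,\dots,x_m,y_1,\dots,y_m$ nor the commutations \eqref{tsym1}, \eqref{TYi} reduce the problem to Hecke bookkeeping. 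In particular your claim that the $D$-statement is ``made simpler'' because $\sum_{i>m}Y_i$ commutes with $T_{m+1},\dots,T_{N-1}$ and $G$ is independent of the tail variables is not sufficient: every $Y_i$ with $i>m$ still mixes the first $m$ variables through $\omega$, and this is exactly the part that requires a genuine argument. The paper handles these two hard pieces by (i) proving the cross-symmetry $T_i^{(x)}\bar K_m=T_{m-i}^{(y)}\bar K_m$ (Lemma~\ref{lema Ti}) via the reflection antihomomorphism $\varphi_m$, (ii) reducing the claim, through the word identities of Lemma~\ref{rel phi}, to an identity in the Hecke algebra, and (iii) importing two nontrivial kernel symmetries from \cite{BDLM2} (the symmetries of $G(x,y)$ and $L(x,y)$ there) that dispose of the factors involving $\omega$ and the tail generators. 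Nothing in your sketch substitutes for (iii).

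The inductive frame you propose also has a structural flaw. The Hecke operators are not multiplicative (they are divided-difference-type operators), so from $T_{\omega_m}=T_{\omega_{m-1}}T_{m-1}\cdots T_1$ you cannot relate $T_{\omega_m}[F_m]$ to $T_{\omega_{m-1}}[F_{m-1}]$ times a product of boundary factors; the ``peeling'' does not factor. Moreover the statement being induced changes with $m$ in an incompatible way: at level $m$ the symmetric operators are $Y_1,\dots,Y_m$ and $D=Y_{m+1}+\cdots+Y_N-\sum_{i=m+1}^N t^{1-i}$, while at level $m-1$ the operator $Y_m$ has migrated from the ``$D$-side'' to the ``individual side,'' so the inductive hypothesis does not feed the inductive step. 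Your two proposed consistency checks (specializing $N=m$ to recover the kernel of \cite{MN}, and testing against the power-sum expansion \eqref{eqkernel0}) are sanity checks, not proofs. To repair the argument you would essentially need to prove Lemma~\ref{lema Ti} and the two symmetries cited from \cite{BDLM2}, which is the route the paper actually takes.
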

As already mentioned, the eigenvalues of the $Y_i$'s and $D$ do not depend on the number of variables $N$. Using Proposition~\ref{propostable}, we can thus define  the operators $\tilde Y_i$ and $\tilde D$ as their inverse limits.  In other words, the operators $\tilde Y_i: R_m \to R_m$ (for $i=1,\dots,m$) and $\tilde D: R_m \to R_m$ are defined such that
$$
\tilde Y_i P_\Lambda(x;q,t)=\varepsilon^{(i)}_\Lambda(q,t) P_\Lambda(x;q,t)  \quad {\rm and} \quad \tilde D P_\Lambda(x;q,t)=\varepsilon^D_\Lambda(q,t) P_\Lambda(x;q,t)
$$
for all $m$-partitions $\Lambda$,
where $\varepsilon^{(i)}_\Lambda(q,t)$ and $\varepsilon^D_\Lambda(q,t)$ are such as defined in \eqref{defeigeni} and \eqref{defeigenD} respectively.  We will now see that the previous proposition also holds for $\tilde Y_i$ and $\tilde D$.
\begin{proposition} \label{propsym}  
  We have that
  $$
  \tilde Y_i^{(x)} K_m(x,y) =  \tilde Y_i^{(y)} K_m(x,y) 
  $$
  for $i=1,\dots,m$,
  and
  $$
\tilde D^{(x)} K_m(x,y) =  \tilde D^{(y)} K_m(x,y)
$$
Moreover,
 \begin{equation} \label{eqKm}
K_m(x,y) = \sum_{\Lambda} b_\Lambda(q,t) P_\Lambda(x;q,t) P_\Lambda(y;q,t)
  \end{equation}
for certain coefficients $b_\Lambda(q,t) \in \mathbb Q(q,t)$ that will be given explicitely in Corollary~\ref{coroKm}.
\end{proposition}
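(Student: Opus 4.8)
The plan is to deduce the infinite-variable statement from its finite-variable counterpart (Proposition~\ref{propsymfinite}) by passing to the inverse limit, and then to extract the diagonal expansion \eqref{eqKm} by a spectral argument resting on the fact that the joint eigenvalues of $\tilde Y_1,\dots,\tilde Y_m,\tilde D$ separate $m$-partitions. First I would record two structural facts about $K_m(x,y)$. Since $K_0(x,y)=\sum_\lambda z_\lambda(q,t)^{-1}p_\lambda(x)p_\lambda(y)$ by \eqref{eqkernel0}, each of its terms has equal degree in the $x$ and in the $y$ variables; moreover the bracketed factor is a power series in the products $x_iy_j$ and $T^{(x)}_{\omega_m}$ preserves $x$-degree, so every term of $K_m(x,y)$ is balanced. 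Consequently $K_m(x,y)$ lies in the completed tensor product $\widehat{R_m\otimes R_m}$, and in each fixed bidegree it is a finite combination of products $P_\Lambda(x)P_\Omega(y)$ with $|\Lambda|=|\Omega|$, so all sums below are finite bidegree by bidegree and no convergence issue arises. Second, I would check compatibility with restriction: setting $x_i=y_i=0$ for $i>N$ turns every factor of $K_0$ involving an index $>N$ into $1$ and, for $N\ge m$, leaves the bracketed factor (which only involves indices $\le m$) untouched, so $K_m(x,y)$ restricts to $K_m(x_{(N)},y_{(N)})$.

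For the symmetry (the first two identities) I would use that $\tilde Y_i$ and $\tilde D$ are by construction the inverse limits of the finite operators: by the definition of these inverse limits and the stability of Proposition~\ref{propostable}, for $N$ large relative to a given degree the restriction map to $N$ variables intertwines $\tilde Y_i^{(x)}$ with the finite operator $Y_i^{(x)}$, and likewise $\tilde D$ with $D$. Hence the degree-$d$ restriction of $\tilde Y_i^{(x)}K_m(x,y)$ equals $Y_i^{(x)}K_m(x_{(N)},y_{(N)})$, and similarly for the $(y)$-operator, for all large $N$. Proposition~\ref{propsymfinite} gives $Y_i^{(x)}K_m(x_{(N)},y_{(N)})=Y_i^{(y)}K_m(x_{(N)},y_{(N)})$ together with the analogous statement for $D$. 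Since an element of $\widehat{R_m\otimes R_m}$ is determined by its restrictions to all finite $N$, the two symmetry identities $\tilde Y_i^{(x)}K_m=\tilde Y_i^{(y)}K_m$ and $\tilde D^{(x)}K_m=\tilde D^{(y)}K_m$ follow at once.

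For the expansion \eqref{eqKm} I would write $K_m(x,y)=\sum_{\Lambda,\Omega}c_{\Lambda\Omega}(q,t)\,P_\Lambda(x)P_\Omega(y)$, which is legitimate because the $P_\Lambda$ form a basis of $R_m$ by Proposition~\ref{propounitriang} (with $c_{\Lambda\Omega}=0$ unless $|\Lambda|=|\Omega|$). Applying the diagonal operators term by term, $\tilde Y_i^{(x)}$ contributes the factor $\varepsilon_\Lambda^{(i)}$ and $\tilde Y_i^{(y)}$ the factor $\varepsilon_\Omega^{(i)}$, so the symmetry just proved forces $c_{\Lambda\Omega}\,(\varepsilon_\Lambda^{(i)}-\varepsilon_\Omega^{(i)})=0$ for $i=1,\dots,m$, and the $D$-symmetry forces $c_{\Lambda\Omega}\,(\varepsilon_\Lambda^{D}-\varepsilon_\Omega^{D})=0$. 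Because the joint eigenvalues $(\varepsilon_\Lambda^{(1)},\dots,\varepsilon_\Lambda^{(m)},\varepsilon_\Lambda^{D})$ uniquely determine the $m$-partition $\Lambda$ (as stressed after Proposition~\ref{propounitriang}), for $\Lambda\ne\Omega$ at least one factor is nonzero and hence $c_{\Lambda\Omega}=0$. This collapses the double sum to the diagonal, yielding \eqref{eqKm} with $b_\Lambda=c_{\Lambda\Lambda}$.

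I expect the main obstacle to be the inverse-limit bookkeeping in the symmetry step: one must argue cleanly that $K_m(x,y)$ genuinely lives in $\widehat{R_m\otimes R_m}$, that applying the diagonal operators $\tilde Y_i^{(x)},\tilde D^{(x)}$ term by term is legitimate, and that restriction to $N$ variables commutes with these operators for $N$ large in each fixed degree. The balanced, bidegree-finite structure of $K_m$ established at the outset is precisely what reduces each of these points to a finite, unambiguous computation and lets the finite-variable Proposition~\ref{propsymfinite} be promoted to the infinite-variable setting.
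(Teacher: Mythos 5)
Your proposal is correct and follows essentially the same route as the paper's proof: expand $K_m(x,y)$ in the basis $\{P_\Lambda(x)P_\Omega(y)\}$, transfer the finite-variable symmetry of Proposition~\ref{propsymfinite} to $\tilde Y_i,\tilde D$ via the stability of Proposition~\ref{propostable}, and then use the fact that the joint eigenvalues $(\varepsilon^{(1)}_\Lambda,\dots,\varepsilon^{(m)}_\Lambda,\varepsilon^D_\Lambda)$ determine $\Lambda$ to kill the off-diagonal coefficients. The only cosmetic difference is that you establish the symmetry by restriction-determinacy before introducing the double expansion, whereas the paper introduces the expansion first and phrases the symmetry step as a proof by contradiction; your extra bookkeeping on bidegrees makes explicit what the paper leaves implicit.
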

\begin{proof}
  From \eqref{eqkernel0} and the fact that  $K_m(x,y)/K_0(x,y)$ only depends on $x_1,\dots,x_m$, we have that
  $$
K_m(x,y)= \sum_{\Lambda,\Omega} d_{\Lambda \Omega}(q,t) P_\Lambda(x;q,t)  P_\Omega(y;q,t)
$$
for some coefficients $d_{\Lambda \Omega}(q,t)$. From Proposition~\ref{propostable},
we then get in $N$ variables that
 $$
K_m(x_{(N)},y_{(N)})= \sum_{\ell(\Lambda),\ell(\Omega) \leq N } d_{\Lambda \Omega}(q,t) P_\Lambda(x_{(N)};q,t)  P_\Omega(y_{(N)};q,t)
$$
Therefore, if for instance the action of $\tilde D^{(x)}$ were different from  that of $\tilde D^{(y)}$ then there would exist a coefficient $d_{\Lambda \Omega}(q,t)$ such that
$\varepsilon^D_\Lambda (q,t) d_{\Lambda \Omega}(q,t) \neq \varepsilon^D_\Omega (q,t) d_{\Lambda \Omega}(q,t)$.
But then, choosing $N$ large enough, this would contradict the fact that  $D^{(x)}$ and $D^{(y)}$
have the same action on $K_m(x_{(N)},y_{(N)})$. Hence, the first part of the proposition holds.

Now, this entails that for all $\Lambda,\Omega$ we have
$$
\varepsilon^D_\Lambda (q,t) d_{\Lambda \Omega}(q,t) = \varepsilon^D_\Omega (q,t) d_{\Lambda \Omega}(q,t) \qquad {\rm and} \qquad \varepsilon^{(i)}_\Lambda (q,t) d_{\Lambda \Omega}(q,t) = \varepsilon^{(i)}_\Omega (q,t) d_{\Lambda \Omega}(q,t), \quad i=1,\dots,m
$$
Given that the eigenvalues as a whole uniquely determine $\Lambda$, we must have that $d_{\Lambda \Omega}(q,t)=0$ if $\Lambda \neq \Omega$. Letting $b_\Lambda(q,t)=d_{\Lambda \Lambda}(q,t)$, we get that \eqref{eqKm} also holds.
\end{proof}

The following proposition will be instrumental in the proof that $K_m(x,y)$ is a reproducing kernel for the scalar product \eqref{eqscal}.
\begin{proposition} \label{propt0} We have
  $$
 {{t^{-{m\choose 2} }}} T^{(x)}_{{\mathbf \omega}_m} \left[ \frac{\prod_{i+j \leq m} (1-t x_i y_j)}{\prod_{i+j \leq m+1}(1- x_i y_j)} \right] = \sum_{\pmb a } t^{-{\rm Inv}(\pmb a)} H_{\pmb a}(x;t)  H_{\pmb a}(y;t) 
 $$
where the sum is over all $\pmb a \in \mathbb Z_{\geq 0}^m$.
 \end{proposition}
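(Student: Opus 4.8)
The plan is to regard both sides as formal power series in $y_1,\dots,y_m$ with coefficients in $\mathbb Q(t)[x_1,\dots,x_m]$ (the left-hand side after expanding the denominators geometrically), to expand each side in the basis $\{H_{\pmb a}(x;t)\}_{\pmb a\in\mathbb Z_{\ge0}^m}$ of $\mathbb Q(t)[x_1,\dots,x_m]$, and to compare, for every $\pmb a$, the coefficient of $H_{\pmb a}(x;t)$. For the right-hand side this coefficient is manifestly $t^{-{\rm Inv}(\pmb a)}H_{\pmb a}(y;t)$, so the whole problem reduces to computing the $H$-expansion of the left-hand side. As a first book-keeping step I would record the factorization
\[
\frac{\prod_{i+j\le m}(1-tx_iy_j)}{\prod_{i+j\le m+1}(1-x_iy_j)}=\prod_{i=1}^m \frac{1}{1-x_iy_{m+1-i}}\prod_{j=1}^{m-i}\frac{1-tx_iy_j}{1-x_iy_j},
\]
so that each factor depends on a single $x_i$, together with the observation that the whole expression is symmetric under the simultaneous exchange $x\leftrightarrow y$ (the index conditions $i+j\le m$ and $i+j\le m+1$ being symmetric in $i,j$); this symmetry will serve as a consistency check.

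The key computational input is the one–variable Hall–Littlewood generating series $\prod_i (1-tuy_i)/(1-uy_i)=\sum_{r\ge 0}\phi_r(y;t)\,u^r$, which expands the $i$-th factor above as a power series in $x_i$ whose coefficients are built from the $\phi_r$'s (in $y_1,\dots,y_{m-i}$) and from powers of $y_{m+1-i}$. This yields an explicit monomial expansion $G_m(x,y)=\sum_{\pmb b}x^{\pmb b}\,C_{\pmb b}(y)$. I would then apply $T^{(x)}_{\omega_m}$ and re-express the outcome in the $H$-basis by means of the Hall–Littlewood recursion: $T_iH_{\pmb a}=H_{s_i\pmb a}$ when $a_i>a_{i+1}$ and $T_iH_{\pmb a}=tH_{\pmb a}$ when $a_i=a_{i+1}$ (both from \eqref{property2} at $q=0$, as recalled before \eqref{eqplambda}), and $T_iH_{\pmb a}=(t-1)H_{\pmb a}+tH_{s_i\pmb a}$ when $a_i<a_{i+1}$ (obtained by applying $T_i^2=(t-1)T_i+t$ to the previous case).

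The heart of the argument, and the step I expect to be the main obstacle, is to show that once $T^{(x)}_{\omega_m}$ has been applied, all off–diagonal contributions $H_{\pmb a}(x;t)H_{\pmb b}(y;t)$ with $\pmb a\neq\pmb b$ cancel, leaving the diagonal coefficient of $H_{\pmb a}(x;t)$ equal to exactly $t^{{m\choose 2}-{\rm Inv}(\pmb a)}H_{\pmb a}(y;t)$; dividing by $t^{{m\choose2}}$ then gives the claim. I would carry this out by induction on $m$, using the reduced factorization of the longest element $T_{\omega_m}=T_{\omega_{m-1}}\,T_{m-1}T_{m-2}\cdots T_1$ (where $T_{\omega_{m-1}}$ involves only $T_1,\dots,T_{m-2}$): applying the chain $T_{m-1}\cdots T_1$ first interacts with the corner factor $1/(1-x_my_1)$, the unique appearance of $x_m$ in $G_m$, and after simplification reduces the identity to the same statement with $m$ replaced by $m-1$. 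The two delicate points are the precise tracking of the powers of $t$ coming from the equal–entry case of the recursion, which must assemble into $t^{{m\choose 2}-{\rm Inv}(\pmb a)}$, and the telescoping cancellation of the cross terms; here the $x\leftrightarrow y$ symmetry of $G_m$ forces the resulting bilinear form to be symmetric, which together with the triangularity of the $H$-expansion pins down diagonality. A workable alternative to the induction on $m$ is an induction on the degree $|\pmb a|$ driven by the creation operator $\Phi_q=t^{1-m}T_{m-1}\cdots T_1\,x_1$ of \eqref{eqPhi}, which at $q=0$ sends $H_{\pmb a}\mapsto t^{r_{\pmb a}(1)-m}H_{\Phi\pmb a}$ with $\Phi\pmb a=(a_2,\dots,a_m,a_1+1)$; matching the action of $\Phi_q^{(y)}$ at $q=0$ on both sides would close the recursion once the base case $\pmb a=0^m$ is verified.
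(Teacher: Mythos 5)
Your proposal takes a genuinely different route from the paper, but it has a real gap at its center: the step you yourself call ``the main obstacle'' --- that after applying $T^{(x)}_{\omega_m}$ all cross terms $H_{\pmb a}(x;t)H_{\pmb b}(y;t)$ with $\pmb a\neq\pmb b$ cancel and the diagonal coefficient equals $t^{{m\choose 2}-{\rm Inv}(\pmb a)}H_{\pmb a}(y;t)$ --- is never actually proved. Neither of the two strategies you outline (induction on $m$ via $T_{\omega_m}=T_{\omega_{m-1}}T_{m-1}\cdots T_1$, or induction on degree via $\Phi_q$ at $q=0$) is executed, and the second would additionally need an unproven statement transferring the action of $\Phi_q^{(y)}$ on the left-hand side into something computable (an adjointness/commutation property of $\Phi_q$ with respect to the kernel). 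More importantly, the one concrete argument you do offer to force diagonality is fallacious: the symmetry $G_m(x,y)=G_m(y,x)$ does \emph{not} imply that $T^{(x)}_{\omega_m}G_m(x,y)$ is symmetric under $x\leftrightarrow y$. That symmetry is equivalent to the operator identity $T^{(x)}_{\omega_m}G_m=T^{(y)}_{\omega_m}G_m$, which fails for a generic symmetric kernel: for $m=2$ and $G=x_1y_2+x_2y_1$ one computes
\begin{equation*}
T^{(x)}_1G-T^{(y)}_1G=(tx_1-x_2)(y_1-y_2)-(x_1-x_2)(ty_1-y_2)=(1-t)(x_1y_2-x_2y_1)\neq 0.
\end{equation*}
For the particular kernel of the proposition the identity $T^{(x)}_{\omega_m}G_m=T^{(y)}_{\omega_m}G_m$ \emph{is} true, but it is a nontrivial Hecke-algebra fact that exploits the staircase structure of the products ($i+j\leq m$ versus $i+j\leq m+1$); it is precisely the content of Lemma~\ref{lema Ti} combined with \eqref{eqTm} and \eqref{eqvarphi}, proven in Appendix~\ref{appB} for $\bar K_m$ (the same computation applies here). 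Finally, the ``triangularity of the $H$-expansion'' that you want to combine with this symmetry to pin down diagonality is never formulated or established.

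For comparison, the paper avoids all direct computation with $T_{\omega_m}$: it starts from the expansion $K_m(x,y)=\sum_\Lambda b_\Lambda(q,t)P_\Lambda(x;q,t)P_\Lambda(y;q,t)$ of \eqref{eqKm} (obtained from eigenoperators in Proposition~\ref{propsym}), substitutes $y_i\mapsto qy_i$, restricts to $m$ variables so that only the $m$-partitions $\Lambda=(\pmb a;\emptyset)$ survive (Proposition~\ref{propostable}), and sets $q=0$, which turns $P_{(\pmb a;\emptyset)}$ into $H_{\pmb a}$ and $K_0(\bar x,q\bar y)$ into $1$. Diagonality is thus inherited for free, and the only remaining task is to compute the scalar $\bar b_{(\pmb a;\emptyset)}(0,t)$, which is pinned down by the single specialization $y_i=t^{i-1}$ using Corollary~\ref{eval Non} at $q=0$ and the classical expansion $\prod_{i=1}^m(1-x_i)^{-1}=\sum_{\lambda\,:\,\ell(\lambda)\leq m}t^{n(\lambda)}P_\lambda(\bar x;t)$. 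If you wish to pursue your direct route (which is close in spirit to the Mimachi--Noumi kernel discussed in Remark~\ref{remarkcauchy}), the missing cancellation is where essentially all of the work lies, and the symmetry input must come from a Lemma~\ref{lema Ti}-type computation rather than from the symmetry of $G_m$ alone.
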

\begin{proof} Letting $y_i \mapsto q y_i$ in \eqref{eqKm}, we obtain
  \begin{align}
  {{t^{-{m\choose 2} }}}  K_0(x,qy) T^{(x)}_{{\mathbf \omega}_m} \left[ \frac{\prod_{i+j \leq m} (1-t x_i y_j)}{\prod_{i+j \leq m+1}(1- x_i y_j)}\right] =K_m(x,qy) & =\sum_{\Lambda} b_{\Lambda}(q,t) P_{\Lambda} (x;q,t) P_{\Lambda} (qy;q,t) \nonumber \\
  & =\sum_{\Lambda} b_{\Lambda}(q,t) q^{|\Lambda|} P_{\Lambda} (x;q,t) P_{\Lambda} (y;q,t)
  \nonumber
  \end{align}
by the homogeneity of $P_{\Lambda} (y;q,t)$. Using  $P_{\Lambda} (x_1,\dots,x_m;q,t)=0$
if $\ell(\Lambda) > m$ by Proposition~\ref{propostable}, we get when 
restricting to $m$ variables that
$$
 {{t^{-{m\choose 2} }}}  K_0(\bar x,q \bar y)  T^{(x)}_{{\mathbf \omega}_m} \left[ \frac{\prod_{i+j \leq m} (1-t x_i y_j)}{\prod_{i+j \leq m+1}(1- x_i y_j)} \right] =\sum_{\pmb a} \bar b_{(\pmb a ;\emptyset)}(q,t) P_{(\pmb a ;\emptyset)} (\bar x;q,t) P_{(\pmb a ;\emptyset)} (\bar y;q,t)
 $$
 where $\bar x=(x_1,\dots,x_m)$ (and similarly for $\bar y$), and where $\bar b_{(\pmb a ;\emptyset)}(q,t)= b_{(\pmb a ;\emptyset)}(q,t) q^{|\pmb a|}$.  Letting $q=0$ and using $ K_0(\bar x,\bar z)=1$ whenever $\bar z=(0,\dots,0)$,
 we then obtain
 \begin{equation} \label{eqdbar}
 {{t^{-{m\choose 2} }}}   T^{(x)}_{{\mathbf \omega}_m} \left[  \frac{\prod_{i+j \leq m} (1-t x_i y_j)}{\prod_{i+j \leq m+1}(1- x_i y_j)} \right]  =\sum_{\pmb a}  \bar b_{(\pmb a ;\emptyset)}(0,t) H_{\pmb a} (   x;t) H_{\pmb a} (   y;t)
 \end{equation}
Note that we have used the fact that 
$P_{(\pmb a ;\emptyset)} (\bar x;0,t)=  H_{\pmb a} (\bar x;t)= H_{\pmb a}( x;t)$ given that $H_{\pmb a} (   x;t)$ only depends on $x_1,\dots,x_m$.
 We thus only need to show that $ \bar b_{(\pmb a ;\emptyset)}(0,t)=t^{-{\rm Inv}(\pmb a)}$, which will be achieved by using the specialization
 $y_i= t^{i-1}$ in the previous equation.  First observe that setting $q=0$ in Corollary~\ref{eval Non} yields  $H_{\pmb a} (   1,t,\dots,t^{m-1};t)=t^{n(\pmb a ^+)} t^{{\rm Inv}(\pmb a)}$ ($a(s)$ is always larger than zero  given that every row
 in the diagram of $\eta=\pmb a$ ends with a circle), and that
 $$
 \frac{\prod_{i+j \leq m} (1- x_i t^j)}{\prod_{i+j \leq m+1}(1- x_i t^{j-1})} = \frac{1}{\prod_{i=1}^m(1- x_i)}
 $$
 Specializing the variables $y$ in \eqref{eqdbar}  thus gives
 $$
{{t^{-{m\choose 2} }}}  T^{(x)}_{{\mathbf \omega}_m} \left[ \frac{1}{\prod_{i=1}^m(1- x_i)} \right] =  \frac{1}{\prod_{i=1}^m(1- x_i)}  =\sum_{\pmb a}  \bar b_{(\pmb a ;\emptyset)}(0,t) t^{n(\pmb a ^+)} t^{{\rm Inv}(\pmb a)} H_{\pmb a} (   x;t) 
 $$
since $\prod_{i=1}^m(1- x_i)$ is symmetric.
  Finally, we observe that $\prod_{i=1}^m(1- tx_i)^{-1}$ is the generating series of the complete symmetric functions  $h_n(x_1,\dots,x_m)$  to deduce  \cite{M} that
  $$
 \frac{1}{\prod_{i=1}^m(1- x_i)} = \sum_n h_n(x_1,\dots,x_m)= \sum_{\lambda \, ; \, \ell(\lambda) \leq m}  t^{n(\lambda)} P_\lambda(\bar x;t) = \sum_{\lambda \, ; \, \ell(\lambda) \leq m} t^{n(\lambda)}  \sum_{\pmb a\, : \, \pmb a^+ = \lambda} H_{\pmb a}(x;t)
  $$
 where $ P_\lambda(\bar x;t)$ is a Hall-Littlewood polynomial, and where the elementary relation $P_\lambda(\bar x;t)=\sum_{\pmb a\, : \, \pmb a^+ = \lambda} H_{\pmb a}(x;t)$ can be found for instance in \cite{Mac1}. Comparing the previous two equations, we obtain immediately that $\bar b_{(\pmb a ;\emptyset)}(0,t) = t^{-{\rm Inv}(\pmb a)}$, as wanted.
 \end{proof}  
\begin{corollary} We have
  $$
K_m(x,y)=\sum_{\Lambda} q^{-|\pmb a|} t^{-{\rm Inv}(\pmb a)} z_{\lambda}(q,t)^{-1}  p_{\Lambda} (x;t) p_{\Lambda} (y;t)
  $$
\end{corollary}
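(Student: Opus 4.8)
The plan is to deduce the corollary directly from Proposition~\ref{propt0} together with the power-sum expansion \eqref{eqkernel0} of the Macdonald kernel $K_0$. The only discrepancy between the bracket appearing in the definition of $K_m(x,y)$ and the one in Proposition~\ref{propt0} is the shift $x_iy_j\mapsto q^{-1}x_iy_j$, so the heart of the argument is to show that this shift merely produces the extra factor $q^{-|\pmb a|}$.

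First I would record that $T^{(x)}_{\omega_m}$ commutes with the uniform rescaling $R\colon x_i\mapsto q^{-1}x_i$ of the variables $x_1,\dots,x_m$. Indeed, from \eqref{eqTi} each generator $T_i$ is assembled from $K_{i,i+1}$, which visibly commutes with $R$, and from the coefficient $\tfrac{tx_i-x_{i+1}}{x_i-x_{i+1}}$, which is homogeneous of degree $0$ and is therefore left unchanged when pulled through $R$; hence $T_i R=R\,T_i$ and a fortiori $T_{\omega_m}R=R\,T_{\omega_m}$. Writing $\tilde x_i=q^{-1}x_i$, the bracket in $K_m$ is exactly the bracket of Proposition~\ref{propt0} evaluated at $\tilde x$, so
$$
t^{-{m\choose 2}}\,T^{(x)}_{\omega_m}\!\left[\frac{\prod_{i+j\leq m}(1-tq^{-1}x_iy_j)}{\prod_{i+j\leq m+1}(1-q^{-1}x_iy_j)}\right]=\sum_{\pmb a}t^{-{\rm Inv}(\pmb a)}H_{\pmb a}(\tilde x;t)\,H_{\pmb a}(y;t).
$$
Since $H_{\pmb a}$ is homogeneous of degree $|\pmb a|$, we have $H_{\pmb a}(\tilde x;t)=q^{-|\pmb a|}H_{\pmb a}(x;t)$, which turns the right-hand side into $\sum_{\pmb a}q^{-|\pmb a|}t^{-{\rm Inv}(\pmb a)}H_{\pmb a}(x;t)H_{\pmb a}(y;t)$.

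It then remains to multiply by $K_0(x,y)$ and insert \eqref{eqkernel0}, obtaining
$$
K_m(x,y)=\Bigl(\sum_{\lambda}z_\lambda(q,t)^{-1}p_\lambda(x)p_\lambda(y)\Bigr)\Bigl(\sum_{\pmb a}q^{-|\pmb a|}t^{-{\rm Inv}(\pmb a)}H_{\pmb a}(x;t)H_{\pmb a}(y;t)\Bigr),
$$
after which I would reindex the product by the pairs $\Lambda=(\pmb a;\lambda)$ and use the definition \eqref{eqplambda} of $p_\Lambda(x;t)=H_{\pmb a}(x;t)p_\lambda(x)$ to recognize each summand as $q^{-|\pmb a|}t^{-{\rm Inv}(\pmb a)}z_\lambda(q,t)^{-1}p_\Lambda(x;t)p_\Lambda(y;t)$, which is precisely the claimed identity. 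I do not expect a genuine obstacle; the one step that needs care is the commutation of $T^{(x)}_{\omega_m}$ with the rescaling $R$, since it is exactly this that reduces the $q^{-1}$-shifted bracket to the unshifted one of Proposition~\ref{propt0} at the cost of the explicit power $q^{-|\pmb a|}$.
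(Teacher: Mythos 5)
Your proof is correct and follows essentially the same route as the paper: deduce the identity from Proposition~\ref{propt0} via a $q^{-1}$ rescaling, use the homogeneity of $H_{\pmb a}$ to extract the factor $q^{-|\pmb a|}$, and then combine with \eqref{eqkernel0} and the definition \eqref{eqplambda}. The only difference is that the paper substitutes $y_i \mapsto q^{-1}y_i$ rather than $x_i \mapsto q^{-1}x_i$; since $T^{(x)}_{\omega_m}$ acts only on the $x$ variables, that substitution commutes with it trivially, so the commutation argument you give for $T^{(x)}_{\omega_m}$ and the rescaling $R$ (which is correct, and is the one extra step your route requires) is avoided entirely.
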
  
\begin{proof} 
 Letting $y_i \mapsto q^{-1} y_i$ in Proposition~\ref{propt0} yields
$$
 {{t^{-{m\choose 2} }}}  T^{(x)}_{{\mathbf \omega}_m} \left[ \frac{\prod_{i+j \leq m} (1-t q^{-1} x_i y_j)}{\prod_{i+j \leq m+1}(1- q^{-1 }x_i y_j)} \right]
 =\sum_{\pmb a} q^{-|\pmb a|}t^{-{\rm Inv}(\pmb a)} H_{\pmb a} (x;t) H_{\pmb a} (y;t)
 $$
 since  $H_{\pmb a} (q^{-1}y;t)=q^{-|\pmb a|} H_{\pmb a}(y;t)$. The corollary then follows from \eqref{eqkernel0} and the definition of $p_\Lambda(x;t)$.
\end{proof}
We immediately get that
 the function $K_m(x,y)$ is a reproducing kernel for the scalar product \eqref{eqscal}.
 \begin{proposition} \label{propokernel} Let $\{ f_\Lambda(x) \}_\Lambda$ and $\{ g_\Lambda(x) \}_\Lambda$ be two bases of $R_m$.  Then the following two statements are equivalent.
   \begin{enumerate}
   \item $\displaystyle{K_m(x,y) = \sum_\Lambda f_\Lambda(x) g_\Lambda(y)}$ 
    \item  $\displaystyle{\langle f_\Lambda(x) \, ,\, g_\Omega(x) \rangle_m =\delta_{\Lambda \Omega} \quad {\rm for~all~} \Lambda,\Omega}$.
        \end{enumerate}  
\end{proposition}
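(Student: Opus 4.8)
The plan is to reduce the stated equivalence to a single elementary identity between transition matrices, by expanding both bases in the orthogonal reference basis supplied by the preceding results. Writing $w_\Lambda := q^{|\pmb a|} t^{{\rm Inv}(\pmb a)} z_\lambda(q,t)$, the defining relation \eqref{eqscal} says that $\langle p_\Lambda(x;t)\,,\,p_\Omega(x;t)\rangle_m = \delta_{\Lambda\Omega}\,w_\Lambda$, so $\{p_\Lambda(x;t)\}_\Lambda$ is an orthogonal basis of $R_m$ and $\{w_\Lambda^{-1} p_\Lambda(x;t)\}_\Lambda$ is its dual basis. Moreover, the Corollary immediately preceding this proposition gives exactly $K_m(x,y) = \sum_\Lambda w_\Lambda^{-1}\, p_\Lambda(x;t)\, p_\Lambda(y;t)$, i.e. $K_m$ is the sum over $\Lambda$ of the tensor products of these dual basis vectors. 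These two facts are the only inputs I need.

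First I would record the transition matrices of the given bases against the reference basis, setting $f_\Lambda = \sum_\Gamma A_{\Lambda\Gamma}\, p_\Gamma(x;t)$ and $g_\Lambda = \sum_\Gamma B_{\Lambda\Gamma}\, w_\Gamma^{-1} p_\Gamma(x;t)$; both $A=(A_{\Lambda\Gamma})$ and $B=(B_{\Lambda\Gamma})$ are invertible since the $f_\Lambda$ and the $g_\Lambda$ form bases. A one-line computation using bilinearity and the orthogonality of the $p_\Gamma$ then collapses the double sum to $\langle f_\Lambda\,,\,g_\Omega\rangle_m = \sum_\Gamma A_{\Lambda\Gamma} B_{\Omega\Gamma} = (A B^{\mathsf T})_{\Lambda\Omega}$, so statement (2) is equivalent to the matrix identity $A B^{\mathsf T} = I$.

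Next I would expand the left-hand side of (1) in the same way, obtaining $\sum_\Lambda f_\Lambda(x) g_\Lambda(y) = \sum_{\Gamma,\Delta} (A^{\mathsf T} B)_{\Gamma\Delta}\, w_\Delta^{-1}\, p_\Gamma(x;t)\, p_\Delta(y;t)$. Since the products $\{p_\Gamma(x;t)\, p_\Delta(y;t)\}$ are linearly independent, comparing with the Corollary's expression $K_m(x,y) = \sum_\Gamma w_\Gamma^{-1}\, p_\Gamma(x;t)\, p_\Gamma(y;t)$ shows that statement (1) is equivalent to $A^{\mathsf T} B = I$. It then remains only to observe that, for an invertible square matrix $A$, the identity $A B^{\mathsf T} = I$ forces $B^{\mathsf T} = A^{-1}$ and hence $A^{\mathsf T} B = A^{\mathsf T} (A^{-1})^{\mathsf T} = (A^{-1} A)^{\mathsf T} = I$, and symmetrically; thus $A B^{\mathsf T} = I \iff A^{\mathsf T} B = I$, which is precisely (2) $\iff$ (1).

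The step I expect to require the most care is not the algebra but the bookkeeping of the infinite index set: the bases are indexed by all $m$-partitions and $K_m$ lives in a completed tensor product, so I must ensure the transition ``matrices'' and the manipulation $A B^{\mathsf T} = I \iff A^{\mathsf T} B = I$ genuinely make sense. The clean resolution is the grading: by \eqref{eqscal} the scalar product vanishes between power sums of different total degree, so $R_m$ decomposes as an orthogonal direct sum of its finite-dimensional homogeneous components, each containing only finitely many $p_\Lambda$. Taking the relevant bases homogeneous (as are $m_\Lambda$, $p_\Lambda$ and $P_\Lambda$), the matrices $A$ and $B$ are block diagonal by degree, and the transpose/inverse argument is legitimate inside each finite block. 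I would state this graded reduction explicitly at the outset, before carrying out the two expansions.
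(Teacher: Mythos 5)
Your proof is correct and is essentially the paper's own argument: the paper simply invokes the dual pair $\{p_\Lambda^*(x;t)\}$, $\{p_\Lambda(x;t)\}$ (with $p^*_\Lambda$ the rescaling of $p_\Lambda$ by $q^{|\pmb a|}t^{{\rm Inv}(\pmb a)}z_\lambda(q,t)$) together with Corollary~\ref{coroKm}'s expansion of $K_m$, and then cites the classical transition-matrix argument from Macdonald's book, which is exactly the $AB^{\mathsf T}=I \iff A^{\mathsf T}B=I$ computation you carry out. Your explicit treatment of the degree grading (block-finiteness of the transition matrices) is the same implicit hypothesis under which the paper and \cite{M} work, so there is no discrepancy.
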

 \begin{proof}
   Using the bases $\{ p_\Lambda^*(x;t) \}_\Lambda$ and $\{ p_\Lambda(x;t) \}_\Lambda$, where $ p_\Lambda^*(x;t)= q^{|\pmb a|} t^{{\rm Inv}(\pmb a)} z_{\lambda}(q,t)   p_\Lambda(x;t)$, the proof is exactly as the proof of the similar statement in the usual Macdonald polynomial case \cite{M}.
 \end{proof}  

Before stating the main theorem of this section, we need to relate the inclusion and the restriction.  First,
it is straightforward to verify that the inclusion $i: R_m \to R_{m+1}$   
is such that \cite{L}
$$
i(p_\Lambda(x;t)) =p_{\Lambda^0}(x;t)
$$
where 
$\Lambda^0=(\pmb a,0;\lambda)$. The restriction $r: R_{m+1} \to R_m$, which is defined as
$$
r(f)=f(x_1,\dots,x_m,0,x_{m+2},x_{m+3},\dots)  \big |_{(x_{m+2},x_{m+3},\dots) \mapsto (x_{m+1},x_{m+2},\dots) }
$$
is on the other hand such that
\cite{L}
\begin{equation} \label{rH}
r(p_\Omega(x;t)) =
\left \{ \begin{array}{ll}
  p_{\Omega_-}(x;t) & {\rm if~} b_{m+1}=0 \\
  0 & {\rm otherwise}
\end{array} \right .  
\end{equation}
where 
$\Omega_-=(\pmb b_-;\mu)$ with $\pmb b_-=(b_1,\dots,b_m)$.

The following proposition can  easily be verified using the basis $\left\{ p_\Lambda(x;t)\right\}_\Lambda$ of $R_m$.
\begin{proposition} \label{propir} We have 
\begin{equation} \label{dualir}
\langle i(f),g \rangle_{m+1} = \langle f, r(g) \rangle_m
\end{equation}
for all $f\in R_m$ and all $g \in R_{m+1}$.
\end{proposition}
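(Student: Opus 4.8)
The plan is to verify the identity on the $t$-deformed power-sum bases, since both sides of \eqref{dualir} are bilinear and these bases span $R_m$ and $R_{m+1}$. Thus I would set $f=p_\Lambda(x;t)$ for an $m$-partition $\Lambda=(\pmb a;\lambda)$ and $g=p_\Omega(x;t)$ for an $(m+1)$-partition $\Omega=(\pmb b;\mu)$ with $\pmb b=(b_1,\dots,b_{m+1})$, and compute each side explicitly using the actions of $i$ and $r$ recorded just above, together with the definition \eqref{eqscal}.

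For the left-hand side I would use $i(p_\Lambda(x;t))=p_{\Lambda^0}(x;t)$ with $\Lambda^0=(\pmb a,0;\lambda)$, so that \eqref{eqscal} applied in $R_{m+1}$ gives
\[
\langle i(p_\Lambda),p_\Omega\rangle_{m+1}=\delta_{\Lambda^0\Omega}\,q^{|\pmb a|}t^{{\rm Inv}(\pmb a,0)}z_\lambda(q,t).
\]
The only combinatorial point to check is that appending a trailing zero to $\pmb a$ alters none of the relevant statistics: clearly $|(\pmb a,0)|=|\pmb a|$, and ${\rm Inv}(\pmb a,0)={\rm Inv}(\pmb a)$ because a zero placed in the last position $m+1$ can never satisfy $a_i<0$ and so creates no new inversion. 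Hence the left-hand side equals $\delta_{\Lambda^0\Omega}\,q^{|\pmb a|}t^{{\rm Inv}(\pmb a)}z_\lambda(q,t)$, and the Kronecker delta forces $b_{m+1}=0$, $b_i=a_i$ for $i\le m$, and $\mu=\lambda$.

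For the right-hand side I would split on the value of $b_{m+1}$ using \eqref{rH}. If $b_{m+1}\neq 0$ then $r(p_\Omega)=0$ and the right-hand side vanishes, matching the left-hand side whose delta also vanishes in this case. If $b_{m+1}=0$ then $r(p_\Omega)=p_{\Omega_-}$ with $\Omega_-=(\pmb b_-;\mu)$ and $\pmb b_-=(b_1,\dots,b_m)$, and \eqref{eqscal} applied in $R_m$ gives
\[
\langle p_\Lambda,r(p_\Omega)\rangle_m=\delta_{\Lambda\,\Omega_-}\,q^{|\pmb a|}t^{{\rm Inv}(\pmb a)}z_\lambda(q,t).
\]
The condition $\Lambda=\Omega_-$ reads $\pmb a=(b_1,\dots,b_m)$ and $\lambda=\mu$, which together with $b_{m+1}=0$ is exactly the condition $\Omega=\Lambda^0$ appearing on the left. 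Thus both sides carry the same weight $q^{|\pmb a|}t^{{\rm Inv}(\pmb a)}z_\lambda(q,t)$ and vanish under precisely the same circumstances, establishing \eqref{dualir}.

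The argument presents no real obstacle; the only thing demanding a moment's care is matching the two delta conditions and confirming that the trailing zero leaves $|\pmb a|$, ${\rm Inv}(\pmb a)$ and $z_\lambda(q,t)$ untouched, after which bilinearity completes the proof.
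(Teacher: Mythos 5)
Your proof is correct and follows exactly the route the paper intends: the paper's own justification is the one-line remark that the identity "can easily be verified using the basis $\{p_\Lambda(x;t)\}_\Lambda$ of $R_m$," and your argument is precisely that verification carried out in detail, using $i(p_\Lambda(x;t))=p_{\Lambda^0}(x;t)$, the restriction formula \eqref{rH}, and the observation that the trailing zero changes neither $|\pmb a|$ nor ${\rm Inv}(\pmb a)$.
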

We can now establish the orthogonality and the squared norm of the $m$-symmetric Macdonald polynomials.
\begin{theorem} \label{theoortho}
We have
 $$
\langle P_\Lambda(x;q,t)\, ,\, P_\Omega(x;q,t)  \rangle_m = 0  \qquad {\rm if~} \Lambda\neq\Omega
$$
and 
$$
 \langle P_\Lambda(x;q,t)\, ,\, P_\Lambda(x;q,t)  \rangle_m =  q^{|\pmb a|}t^{{\rm Inv} (\pmb a)} \prod_{s \in \Lambda} \frac{1 -q^{\tilde a(s)+1}t^{\tilde \ell(s)}}{1 -q^{a(s)}t^{\ell(s)+1}}
$$
where the product is over the cells of $\Lambda$ (excluding the circles), and where  the arm and leg-lengths were defined before Example~\ref{exal}.
\end{theorem}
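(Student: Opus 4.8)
The plan is to deduce orthogonality directly from the reproducing kernel, and then to evaluate the squared norm by induction on $m$ through the inclusion--restriction adjunction. For orthogonality, recall from Proposition~\ref{propsym} that $K_m(x,y)=\sum_\Lambda b_\Lambda(q,t)\,P_\Lambda(x;q,t)P_\Lambda(y;q,t)$. Applying Proposition~\ref{propokernel} to the two bases $f_\Lambda=b_\Lambda P_\Lambda$ and $g_\Lambda=P_\Lambda$ gives $\langle b_\Lambda P_\Lambda,P_\Omega\rangle_m=\delta_{\Lambda\Omega}$ for all $\Lambda,\Omega$, whence $\langle P_\Lambda,P_\Omega\rangle_m=0$ for $\Lambda\neq\Omega$ and $\|P_\Lambda\|_m^2:=\langle P_\Lambda,P_\Lambda\rangle_m=b_\Lambda(q,t)^{-1}$. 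Thus only the evaluation of $\|P_\Lambda\|_m^2$ remains.

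I would compute $\|P_\Lambda\|_m^2$ by induction on $m$. The base case $m=0$ is the classical Macdonald norm, since with no circles present one has $\tilde a=a$, $\tilde\ell=\ell$, the prefactor $q^{|\pmb a|}t^{{\rm Inv}(\pmb a)}$ is trivial, and the product collapses to $\prod_s(1-q^{a(s)+1}t^{\ell(s)})/(1-q^{a(s)}t^{\ell(s)+1})$. For the inductive step, fix an $(m+1)$-partition $\Omega=(\pmb b;\mu)$ and let $\Lambda=(b_1,\dots,b_m;\mu\cup\{b_{m+1}\})$ be the $m$-partition obtained by deleting its $(m+1)$-circle; then $\Omega$ is the unique $(m+1)$-partition recovered from $\Lambda$ by adjoining an $(m+1)$-circle to its symmetric row of size $b_{m+1}$. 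The restriction formula of \cite{L} gives $r(P_\Omega)=\phi_{\Omega/\Lambda}(q,t)\,P_\Lambda$ as a single term, consistent with the fact that, for fixed $\Omega$, the inclusion coefficient $\psi_{\Omega/\Lambda}$ of Theorem~\ref{theoinclusion} is nonzero only for this $\Lambda$. Substituting the inclusion expansion and this restriction into the adjunction $\langle i(P_\Lambda),P_\Omega\rangle_{m+1}=\langle P_\Lambda,r(P_\Omega)\rangle_m$ of Proposition~\ref{propir} and using orthogonality on each side yields the recursion $\psi_{\Omega/\Lambda}\,\|P_\Omega\|_{m+1}^2=\phi_{\Omega/\Lambda}\,\|P_\Lambda\|_m^2$.

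It then remains to verify that the proposed product formula solves this recursion. Since $\Lambda$ and $\Omega$ share the same underlying cells ($\Lambda^{(0)}=\Omega^{(0)}=\pmb b\cup\mu$), I only need to record how the four statistics move when the $(m+1)$-circle is adjoined: on the newly circled row, $a\mapsto a+1$ while $\tilde a$ and $\tilde\ell$ stay fixed and $\ell$ grows by the number of circles at the bottom of its column; on the cells of ${\rm col}_{\Omega/\Lambda}$ (the symmetric-row cells in the column of the new circle), $\tilde\ell\mapsto\tilde\ell+1$ with the arms and $\ell$ fixed; and all other cells are unchanged. The prefactor contributes the extra factor $q^{b_{m+1}}t^{\#\{i\le m\,:\,b_i<b_{m+1}\}}$. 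A convenient simplification is that, because $a=\tilde a$ on symmetric rows, the ${\rm col}_{\Omega/\Lambda}$ contribution to the norm ratio is precisely the inverse of $\psi_{\Omega/\Lambda}$ as written in Theorem~\ref{theoinclusion}, so that $\psi_{\Omega/\Lambda}$ cancels and one is reduced to matching the residual factors of the newly circled row, together with the prefactor, against $\phi_{\Omega/\Lambda}$.

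I expect the main obstacle to be exactly this final bookkeeping: correctly handling the column-dependent leg-length increments on the newly circled row and checking that, once $\psi_{\Omega/\Lambda}$ cancels, the remaining row factors together with the prefactor $q^{b_{m+1}}t^{\#\{i\le m\,:\,b_i<b_{m+1}\}}$ reproduce the restriction coefficient $\phi_{\Omega/\Lambda}(q,t)$ supplied by \cite{L}. Everything else in the argument is formal.
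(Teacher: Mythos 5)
Your proposal follows the paper's own proof essentially verbatim: orthogonality is read off from the kernel expansion \eqref{eqKm} combined with Proposition~\ref{propokernel}, and the squared norm is obtained by induction on $m$ through the adjunction $\langle i(f),g\rangle_{m+1}=\langle f,r(g)\rangle_m$ of Proposition~\ref{propir}, the inclusion coefficients $\psi_{\Omega/\Lambda}$ of Theorem~\ref{theoinclusion}, and the restriction formula from \cite{L}, yielding the same recursion $\psi_{\Omega/\Lambda}\,\|P_\Omega\|^2_{m+1}=\phi_{\Omega/\Lambda}\,\|P_\Lambda\|^2_m$ that the paper solves. The bookkeeping you defer at the end (how $a$, $\tilde a$, $\ell$, $\tilde \ell$ move when the $(m+1)$-circle is adjoined, the cancellation of the column factors against $\psi_{\Omega/\Lambda}$, and the matching of the row factors and the prefactor $q^{b_{m+1}}t^{\#\{i\,:\,b_i<b_{m+1}\}}$ against the restriction coefficient) is precisely the computation the paper carries out, and your description of each of these steps is correct.
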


\begin{example} Using $\Lambda=(2,0,0,2;4,1,1)$ such as in Example~\ref{exal}, we get that $\|P_\Lambda(x;q,t) \|^2$ is given by
$$
 \fr{(1-q)(1-q^2t^2)(1-q^3t^2)(1-q^4t^6)(1-qt)(1-q^2t^3)(1-q)(1-q^2t^4)(1-qt^3)(1-qt^2)}{(1-t)(1-qt)(1-q^2t^3)(1-q^3t^5)(1-qt^2)(1-q^2t^4)(1-qt)(1-q^2t^5)(1-t^2)(1-t)} 
$$
\end{example}
\begin{proof}
Proposition~\ref{propokernel} and \eqref{eqKm} immediately imply that the $m$-symmetric Macdonald polynomials are orthogonal, that is,
     $$
\langle P_\Lambda(x;q,t)\, ,\, P_\Omega(x;q,t)  \rangle_m = 0  \qquad {\rm if~} \Lambda\neq\Omega
$$

We thus only have to prove the formula for the squared norm of an $m$-Macdonald polynomial.  Let $\Lambda = (a_1,\ldots,a_{m-1},a_{m};\lambda)$ and $\hat{\Lambda} = (a_1,\ldots a_{m-1}; \lambda \cup \{a_{m}\} )$. Observe that $\hat{\Lambda}$ can be obtained from $\Lambda$ by discarding the $m$-circle.  The restriction of an 
 $m$-Macdonald polynomial is given (in the integral form) by \cite{L}
$$
  r({J}_{\Lambda}(x,q,t)) = q^{a_{m}} t^{\#\{i \, |\, a_i < a_{m}\}}  {J}_{\hat \Lambda}(x,q,t)
$$
which amounts to
\begin{equation} \label{eqr}
    r({P}_{\Lambda}(x,q,t)) = q^{a_{m}} t^{\#\{i \, |\, a_i < a_{m}\}} \varphi_{\Lambda / \hat{\Lambda}}(q,t) {P}_{\hat \Lambda}(x,q,t)
\end{equation}
where
\begin{equation*} 
    \varphi_{\Lambda / \hat{\Lambda}}(q,t)=\frac{c_{\hat \Lambda}(q,t)}{c_{ \Lambda}(q,t)} = \displaystyle \prod_{s \in \Lambda} \dfrac{1-q^{a_{\hat{\Lambda}}(s)}t^{\ell_{\hat{\Lambda}}(s)+1} }{1-q^{a_{\Lambda}(s)}t^{\ell_{\Lambda}(s)+1}} 
\end{equation*}
From the definition of the arm and leg-length, we see that
$a_{\hat{\Lambda}}(s) = a_{\Lambda}(s)$ and $\ell_{\hat{\Lambda}}(s) = \ell_{\Lambda}(s)$
for all $s \in \Lambda$ except those in ${\rm row}_{\Lambda/\hat \Lambda}$, the row in which the $m$-circle of $\Lambda$ lies.  Hence
\begin{equation} \label{varphi}
    \varphi_{\Lambda / \hat{\Lambda}}(q,t) = \displaystyle \prod_{s \in  {\rm row}_{\Lambda/\hat \Lambda}} \dfrac{1-q^{a_{\hat{\Lambda}}(s)}t^{\ell_{\hat{\Lambda}}(s)+1} }{ 1-q^{a_{\Lambda}(s)}t^{\ell_{\Lambda}(s)+1}} 
\end{equation}
Also observe that the formula for the inclusion in Theorem~\ref{theoinclusion} gives
\begin{equation}\label{psi}
    \psi^{-1}_{\Lambda / \hat{\Lambda}}(q,t) =  \prod_{s \in  {\rm col}_{\Lambda / \hat{\Lambda}}} \dfrac{1-q^{\tilde{a}_{{\Lambda}}(s)+1}t^{\tilde{\ell}_{{\Lambda}}(s)} }{ 1-q^{\tilde{a}_{\hat \Lambda}(s)+1 }t^{\tilde{\ell}_{\hat \Lambda}(s)}  }
\end{equation}
where we observe that for  $s \in {\rm col}_{\Lambda / \hat{\Lambda}}$
we have that $\tilde{a}(s) = a(s)$.

The proof will proceed by induction on $m$.
  In the case $m=0$, as was already observed, the scalar product is the usual Macdonald polynomial scalar product.  Using
  $\Lambda = \lambda$, $|\pmb a| = 0$, ${\rm Inv}(\pmb{a}) = 0$, $\tilde{a}(s) = a(s)$ y $\tilde{\ell}(s) = \ell(s)$, we thus have to show that
  the Macdonald polynomials are such that
\begin{equation*}
    \langle {P}_{\lambda}(x,q,t) , {P}_{\lambda}(x,q,t) \rangle_0 =  \displaystyle \prod_{s \in \lambda} \dfrac{1-q^{a(s)+1}t^{\ell(s)}}{1-q^{a(s)}t^{\ell(s)+1}} 
\end{equation*}
But this is the well known formula for the norm squared of a Macdonald polynomial \cite{M}.

Supposing that the theorem holds for the $(m-1)$-symmetric Macdonald polynomials,  we will see that it also holds for the $m$-symmetric Macdonald polynomials.
Let $\Lambda$ and $\hat \Lambda$ be as before.  From the formula for the inclusion of an $(m-1)$-symmetric Macdonald polynomial, we have
\begin{equation} \label{1}
    i({ P}_{\hat{\Lambda}}(x,q,t)) = \sum_{\Omega} \psi_{\Omega / \hat{\Lambda}}(q,t) {P}_{\Omega}(x,q,t)
\end{equation}
where $\Omega$ is obtained from  $\hat{\Lambda}$ by adding an $m$-circle.
Taking $\Omega=\Lambda$, we get from the orthogonality of the $m$-Macdonald polynomials that
\begin{equation*}
    \langle i({ P}_{\hat{\Lambda}}(x,q,t)) , { P}_{\Lambda}(x,q,t) \rangle_{m} = \psi_{\Lambda / \hat{\Lambda}}(q,t) \langle {P}_{\Lambda}(x,q,t) , {P}_{\Lambda}(x,q,t) \rangle_{m}  
\end{equation*}
or equivalently, that 
\begin{equation*}
    \langle { P}_{\Lambda}(x,q,t) , { P}_{\Lambda}(x,q,t) \rangle_{m}  = \psi^{-1}_{\Lambda / \hat{\Lambda}} (q,t)    \langle i({P}_{\hat{\Lambda}}(x,q,t)) , {P}_{\Lambda}(x,q,t) \rangle_{m}
\end{equation*}
From Proposition~\ref{propir}, we then obtain
\begin{equation*}
    \langle { P}_{\Lambda}(x,q,t) , {P}_{\Lambda}(x,q,t) \rangle_{m}    =  \psi^{-1}_{\Lambda / \hat{\Lambda}}(q,t) \langle {P}_{\hat{\Lambda}}(x,q,t) , r({P}_{\Lambda}(x,q,t)) \rangle_{m-1} 
\end{equation*}
which amounts, using \eqref{eqr}, to
\begin{equation*}
    \langle {P}_{\Lambda}(x,q,t) , {P}_{\Lambda}(x,q,t) \rangle_{m}    =  q^{a_{m+1}}t^{\{i\, | \,a_i < a_{m+1}\}}\psi^{-1}_{\Lambda / \hat{\Lambda}}(q,t)  \varphi_{\Lambda / \hat{\Lambda}}(q,t) \langle { P}_{\hat{\Lambda}}(x,q,t) , {P}_{\hat{\Lambda}}(x,q,t) \rangle_{m-1} 
\end{equation*}
By induction, we thus get
\begin{align*}
    \langle { P}_{\Lambda}(x,q,t) , {P}_{\Lambda}(x,q,t) \rangle_{m} &= q^{a_{m}}t^{\{i\, | \, a_i < a_{m}\}}\psi^{-1}_{\Lambda / \hat{\Lambda}}(q,t)  \varphi_{\Lambda / \hat{\Lambda}}(q,t) q^{|\hat{\pmb {a}}|}t^{{\rm Inv}(\hat{\pmb {a}})} \displaystyle \prod_{s \in \hat \Lambda} \dfrac{1-q^{\tilde{a}_{\hat{\Lambda}}(s)+1}t^{\tilde{\ell}_{\hat{\Lambda}}(s)}}{1-q^{a_{\hat{\Lambda}}(s)}t^{\ell_{\hat{\Lambda}}(s)+1}} \\
     &= q^{|\pmb {a}|}t^{{\rm Inv}(\pmb {a})} \psi^{-1}_{\Lambda / \hat{\Lambda}}(q,t)  \varphi_{\Lambda / \hat{\Lambda}}(q,t) \displaystyle \prod_{s \in \Lambda} \dfrac{1-q^{\tilde{a}_{\hat{\Lambda}}(s)+1}t^{\tilde{\ell}_{\hat{\Lambda}}(s)}}{1-q^{a_{\hat{\Lambda}}(s)}t^{\ell_{\hat{\Lambda}}(s)+1}}
\end{align*}
Now, $\tilde{a}_{\Lambda}(s)= \tilde{a}_{\hat{\Lambda}}(s)$ for all $s \in \Lambda$,
$\tilde{\ell}_{\Lambda}(s)= \tilde{\ell}_{\hat{\Lambda}}(s)$ for all $s \in \Lambda\setminus {\rm col}_{\Lambda / \hat{\Lambda}}$ while  $a_{\hat{\Lambda}}(s) = a_{\Lambda}(s)$ and $\ell_{\hat{\Lambda}}(s) = \ell_{\Lambda}(s)$ for all  $s \in \Lambda\setminus {\rm row}_{\Lambda / \hat{\Lambda}}$.  The squared norm of $P_\Lambda$ is thus equal to
\begin{equation*}
     q^{|\pmb {a}|}t^{{\rm Inv}(\pmb {a})} \psi^{-1}_{\Lambda / \hat{\Lambda}}(q,t)  \varphi_{\Lambda / \hat{\Lambda}}(q,t)  \dfrac{\displaystyle \prod_{\Lambda\setminus {\rm col}_{\Lambda / \hat{\Lambda}}} (1-q^{\tilde{a}_{\Lambda}(s)+1}t^{\tilde{\ell}_{\Lambda}(s)})}{\displaystyle \prod_{\Lambda\setminus {\rm row}_{\Lambda / \hat{\Lambda}}} (1-q^{a_{\Lambda}(s)}t^{\ell_{\Lambda}(s)+1})} \dfrac{\displaystyle \prod_{s \in {\rm col}_{\Lambda / \hat{\Lambda}}} (1-q^{\tilde{a}_{\hat{\Lambda}}(s)+1}t^{\tilde{\ell}_{\hat{\Lambda}}(s)})}{\displaystyle \prod_{s \in  {\rm row}_{\Lambda / \hat{\Lambda}}} (1-q^{a_{\hat{\Lambda}}(s)}t^{\ell_{\hat{\Lambda}}(s)+1})}
\end{equation*}
Finally, using \eqref{varphi} and \eqref{psi}, we obtain
\begin{equation*}
      \langle { P}_{\Lambda}(x,q,t) , {P}_{\Lambda}(x,q,t) \rangle_{m} = q^{|\pmb {a}|}t^{{\rm Inv}(\pmb {a})} \dfrac{\displaystyle \prod_{s \in \Lambda} (1-q^{\tilde{a}_{\Lambda}(s)+1}t^{\tilde{\ell}_{\Lambda}(s)})}{\displaystyle \prod_{s \in \Lambda} (1-q^{a_{\Lambda}(s)}t^{\ell_{\Lambda}(s)+1})}
\end{equation*}
as wanted.
\end{proof}
\begin{corollary} \label{coroself} The operators $\tilde Y_i$, for $i=1,\dots,m$, and $\tilde D$ defined before Proposition~\ref{propsym} are self-adjoint with respect to the scalar product $\langle \cdot, \cdot \rangle_m$, that is,
$$
\langle \tilde Y_i f, g \rangle_m = \langle  f, \tilde Y_i g \rangle_m  \quad {\rm for~} i=1,\dots,m \quad {\rm and} \quad \langle \tilde D f, g \rangle_m = \langle  f, \tilde D g \rangle_m
$$
\end{corollary}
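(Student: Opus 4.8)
The plan is to exploit the fact that $\{P_\Lambda(x;q,t)\}_\Lambda$ is a basis of $R_m$ (Proposition~\ref{propounitriang}), so that by the $\mathbb{Q}(q,t)$-bilinearity of $\langle\cdot,\cdot\rangle_m$ it suffices to verify the self-adjointness relations on pairs of basis elements $P_\Lambda$ and $P_\Omega$. The whole point is that the $P_\Lambda$ are simultaneously eigenfunctions of $\tilde Y_i$ and $\tilde D$ with \emph{scalar} eigenvalues, and that they are pairwise orthogonal by Theorem~\ref{theoortho}; these two facts together force the identities.

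Concretely, for $\tilde Y_i$ I would compute both sides using the eigenrelation $\tilde Y_i P_\Lambda = \varepsilon^{(i)}_\Lambda(q,t) P_\Lambda$. Since $\varepsilon^{(i)}_\Lambda(q,t) \in \mathbb{Q}(q,t)$ is a scalar, it pulls out of the scalar product, giving
$$\langle \tilde Y_i P_\Lambda, P_\Omega \rangle_m = \varepsilon^{(i)}_\Lambda(q,t)\,\langle P_\Lambda, P_\Omega \rangle_m, \qquad \langle P_\Lambda, \tilde Y_i P_\Omega \rangle_m = \varepsilon^{(i)}_\Omega(q,t)\,\langle P_\Lambda, P_\Omega \rangle_m.$$
When $\Lambda \neq \Omega$, orthogonality (Theorem~\ref{theoortho}) gives $\langle P_\Lambda, P_\Omega \rangle_m = 0$, so both expressions vanish and trivially agree. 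When $\Lambda = \Omega$, the two eigenvalues coincide, so again the two expressions are equal. Hence $\langle \tilde Y_i f, g \rangle_m = \langle f, \tilde Y_i g \rangle_m$ for all basis pairs, and by bilinearity for all $f,g \in R_m$.

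The argument for $\tilde D$ is identical, replacing $\varepsilon^{(i)}_\Lambda(q,t)$ by $\varepsilon^D_\Lambda(q,t)$ and the eigenrelation by $\tilde D P_\Lambda = \varepsilon^D_\Lambda(q,t) P_\Lambda$. There is no real obstacle here: once Theorem~\ref{theoortho} has established the orthogonality of the $P_\Lambda$, the self-adjointness of the diagonal operators $\tilde Y_i$ and $\tilde D$ is an immediate formal consequence. The only point worth stressing is that the scalar product is honestly $\mathbb{Q}(q,t)$-bilinear (not sesquilinear), so that the scalar eigenvalues factor out of either argument without any conjugation of $q$ and $t$.
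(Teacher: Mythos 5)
Your proof is correct and is precisely the argument the paper intends: the corollary is stated without proof immediately after Theorem~\ref{theoortho} because, as you show, self-adjointness of $\tilde Y_i$ and $\tilde D$ follows formally from the fact that they act diagonally on the orthogonal basis $\{P_\Lambda\}_\Lambda$ with scalar eigenvalues, using the $\mathbb{Q}(q,t)$-bilinearity of $\langle\cdot,\cdot\rangle_m$. Nothing is missing; your explicit case split ($\Lambda\neq\Omega$ versus $\Lambda=\Omega$) and the remark that the form is bilinear rather than sesquilinear are exactly the points that make the deduction immediate.
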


We can also immediately deduce from Proposition~\ref{propokernel} the value of the coefficients $b_\Lambda (q,t)$ in \eqref{eqKm}.
\begin{corollary}\label{coroKm} We have
 \begin{equation} \label{eqcoroKm}
K_m(x,y) = \sum_{\Lambda} b_\Lambda(q,t) P_\Lambda(x;q,t) P_\Lambda(y;q,t)
  \end{equation}
 where
 $$
b_\Lambda(q,t)^{-1} = \langle P_\Lambda(x;q,t)\, ,\, P_\Lambda(x;q,t)  \rangle_m =  q^{|\pmb a|}t^{{\rm Inv} (\pmb a)} \prod_{s \in \Lambda} \frac{1 -q^{\tilde a(s)+1}t^{\tilde \ell(s)}}{1 -q^{a(s)}t^{\ell(s)+1}}
 $$
\end{corollary}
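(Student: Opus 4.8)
The plan is to read off the coefficient $b_\Lambda(q,t)$ directly from the reproducing-kernel characterization in Proposition~\ref{propokernel}, using the orthogonality already established in Theorem~\ref{theoortho}. Note first that the display \eqref{eqcoroKm} merely restates \eqref{eqKm}, so the whole content of the corollary is the identification of $b_\Lambda(q,t)$ with the inverse squared norm.

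First I would invoke Theorem~\ref{theoortho}, which shows that the family $\{P_\Lambda(x;q,t)\}_\Lambda$ is orthogonal with respect to $\langle\cdot,\cdot\rangle_m$ and that each squared norm $\|P_\Lambda\|^2=\langle P_\Lambda,P_\Lambda\rangle_m$ is a nonzero element of $\mathbb Q(q,t)$, being a ratio of factors of the form $1-q^a t^b$. Consequently the two families $f_\Lambda:=\|P_\Lambda\|^{-2}P_\Lambda$ and $g_\Lambda:=P_\Lambda$ are both bases of $R_m$ (Proposition~\ref{propounitriang}), and by orthogonality they satisfy $\langle f_\Lambda,g_\Omega\rangle_m=\delta_{\Lambda\Omega}$; that is, statement (2) of Proposition~\ref{propokernel} holds for this pair of bases.

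Applying the equivalence in Proposition~\ref{propokernel}, statement (1) then yields
$$
K_m(x,y)=\sum_\Lambda f_\Lambda(x)\,g_\Lambda(y)=\sum_\Lambda \frac{1}{\|P_\Lambda\|^2}\,P_\Lambda(x;q,t)\,P_\Lambda(y;q,t).
$$
Comparing this expansion with \eqref{eqKm} and using the linear independence of the products $P_\Lambda(x;q,t)\,P_\Lambda(y;q,t)$ (which follows from $\{P_\Lambda\}$ being a basis of $R_m$ in each set of variables), the coefficients must agree term by term, whence $b_\Lambda(q,t)=\|P_\Lambda\|^{-2}$, i.e. $b_\Lambda(q,t)^{-1}=\langle P_\Lambda,P_\Lambda\rangle_m$. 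The explicit product formula for this quantity is then supplied verbatim by Theorem~\ref{theoortho}.

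Since every ingredient is already in place, there is no substantial obstacle; the corollary is genuinely ``immediate'' as announced. The only points requiring a word of care are confirming that $\|P_\Lambda\|^2\neq 0$, so that $\{f_\Lambda\}$ really is a basis and Proposition~\ref{propokernel} may be invoked, and noting the linear independence of the products $P_\Lambda(x)P_\Lambda(y)$, so that matching coefficients against \eqref{eqKm} is legitimate.
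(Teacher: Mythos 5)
Your proposal is correct and takes essentially the same route as the paper, which likewise deduces the corollary ``immediately'' from Proposition~\ref{propokernel} together with the expansion \eqref{eqKm} and the norm formula of Theorem~\ref{theoortho}. The only (harmless) difference is directional: you build the dual pair $\{\|P_\Lambda\|^{-2}P_\Lambda\}$, $\{P_\Lambda\}$ from Theorem~\ref{theoortho} and use the implication $(2)\Rightarrow(1)$ of Proposition~\ref{propokernel}, then match coefficients against \eqref{eqKm}, whereas the paper reads the coefficients off via $(1)\Rightarrow(2)$; your version has the small advantage of not needing to assume a priori that the $b_\Lambda$ in \eqref{eqKm} are nonzero (so that its two families form bases), since you verify instead that the squared norms are nonzero.
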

The function $K_m(x,y)$ is not totally explicit due to the presence of the operator $T_{\omega_m}$. Using Proposition~\ref{propoinv}, this defect in Corollary~\ref{coroKm} can be corrected.
\begin{proposition} The following Cauchy-type identity holds
    \begin{equation}  \label{eqidentity}
K_0(x,\tilde y)  \left[ \prod_{1\leq i <j \leq m}  \frac{1-tx_i y_j}{1-x_i y_j} \right]  \left[ \prod_{1\leq i \leq m}  \frac{1}{1-x_i y_i} \right] 
 =  \sum_{\Lambda}  a_\Lambda(q,t) P_\Lambda(x;q,t) P_\Lambda(y;q^{-1},t^{-1})
\end{equation}
where $\tilde y$ stands for the alphabet
$$
\tilde y=(qy_1,\dots,q y_m,y_{m+1},y_{m+2},\cdots) 
$$
and where
 $$
a_\Lambda(q,t)^{-1} =  q^{-|\pmb a|}t^{-{\rm Inv} (\pmb a)} b_\Lambda(q,t)^{-1}= \prod_{s \in \Lambda} \frac{1 -q^{\tilde a(s)+1}t^{\tilde \ell(s)}}{1 -q^{a(s)}t^{\ell(s)+1}}
 $$
\end{proposition}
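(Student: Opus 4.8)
The plan is to start from the Cauchy identity of Corollary~\ref{coroKm} and use Proposition~\ref{propoinv} on the $y$-alphabet to replace $P_\Lambda(y;q,t)$ by $P_\Lambda(y;q^{-1},t^{-1})$, and then to simplify the resulting operator expression by inserting the explicit form of $K_m$. Denote by $\tau^{(y)}_i,K^{(y)}_{\omega_m},\bar T^{(y)}_{\omega_m}$ the operators of Proposition~\ref{propoinv} acting on the $y$-variables. The first identity of Proposition~\ref{propoinv} reads $P_\Lambda(y;q^{-1},t^{-1})=q^{-|\pmb a|}t^{-{\rm Inv}(\pmb a)}t^{{m\choose 2}}\,\tau^{(y)}_1\cdots\tau^{(y)}_m K^{(y)}_{\omega_m}\bar T^{(y)}_{\omega_m}P_\Lambda(y;q,t)$, and since $a_\Lambda(q,t)=q^{|\pmb a|}t^{{\rm Inv}(\pmb a)}b_\Lambda(q,t)$ the scalar prefactor collapses $a_\Lambda$ back into $b_\Lambda$. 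Pulling the $\Lambda$-independent $y$-operators out of the sum, Corollary~\ref{coroKm} then shows that the right-hand side of \eqref{eqidentity} equals $t^{{m\choose 2}}\tau^{(y)}_1\cdots\tau^{(y)}_m K^{(y)}_{\omega_m}\bar T^{(y)}_{\omega_m}K_m(x,y)$.

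It then remains to evaluate this operator on $K_m$. Inserting $K_m(x,y)=t^{-{m\choose 2}}K_0(x,y)\,T^{(x)}_{\omega_m}B(x,y)$ with $B(x,y)=\prod_{i+j\le m}(1-tq^{-1}x_iy_j)/\prod_{i+j\le m+1}(1-q^{-1}x_iy_j)$, I would use that $K_0(x,y)$ is symmetric in the $y$'s, so it commutes through $K^{(y)}_{\omega_m}$ and through $\bar T^{(y)}_{\omega_m}$ (here one uses $T_i(fg)=fT_i(g)$, hence $\bar T_i(fg)=f\bar T_i(g)$, whenever $f$ is symmetric in $x_i,x_{i+1}$), while $\tau^{(y)}_1\cdots\tau^{(y)}_m$ turns $K_0(x,y)$ into $K_0(x,\tilde y)$. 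Since $T^{(x)}_{\omega_m}$ commutes with all $y$-operators, the right-hand side of \eqref{eqidentity} becomes $K_0(x,\tilde y)\,\tau^{(y)}_1\cdots\tau^{(y)}_m K^{(y)}_{\omega_m}\bar T^{(y)}_{\omega_m}T^{(x)}_{\omega_m}B(x,y)$. As the left-hand side of \eqref{eqidentity} is $K_0(x,\tilde y)$ times the two displayed products, cancelling $K_0(x,\tilde y)$ reduces the proposition to the operator identity $\tau^{(y)}_1\cdots\tau^{(y)}_m K^{(y)}_{\omega_m}\bar T^{(y)}_{\omega_m}T^{(x)}_{\omega_m}B=\prod_{1\le i<j\le m}\frac{1-tx_iy_j}{1-x_iy_j}\prod_{1\le i\le m}\frac{1}{1-x_iy_i}$.

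The heart of the matter is the key lemma $T^{(x)}_{\omega_m}B=T^{(y)}_{\omega_m}B$. Granting it, $\bar T^{(y)}_{\omega_m}T^{(x)}_{\omega_m}B=\bar T^{(y)}_{\omega_m}T^{(y)}_{\omega_m}B=B$ since $\bar T_{\omega_m}=T_{\omega_m}^{-1}$; a direct check shows that reversing $y_j\mapsto y_{m+1-j}$ interchanges the staircases $i+j\le m$ and $i+j\le m+1$, so $K^{(y)}_{\omega_m}B=\prod_{i<j\le m}(1-tq^{-1}x_iy_j)/\prod_{i\le j\le m}(1-q^{-1}x_iy_j)$, and $\tau^{(y)}_1\cdots\tau^{(y)}_m$ clears the factors $q^{-1}$, yielding exactly the required products. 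To prove the lemma I would appeal to Proposition~\ref{propt0}, which computes $T^{(x)}_{\omega_m}$ applied to the same bracket $B|_{q=1}$ obtained by setting $q^{-1}=1$ and gives $t^{-{m\choose 2}}T^{(x)}_{\omega_m}B|_{q=1}=\sum_{\pmb a}t^{-{\rm Inv}(\pmb a)}H_{\pmb a}(x;t)H_{\pmb a}(y;t)$, a sum manifestly invariant under exchanging the alphabets $x\leftrightarrow y$. The uniform dilation $x\mapsto q^{-1}x$ commutes with every $T^{(x)}_i$ (its coefficient $(tx_i-x_{i+1})/(x_i-x_{i+1})$ is homogeneous of degree $0$) and carries $B|_{q=1}$ to $B$, so by $H_{\pmb a}(q^{-1}x;t)=q^{-|\pmb a|}H_{\pmb a}(x;t)$ one gets $T^{(x)}_{\omega_m}B=t^{{m\choose 2}}\sum_{\pmb a}t^{-{\rm Inv}(\pmb a)}q^{-|\pmb a|}H_{\pmb a}(x;t)H_{\pmb a}(y;t)$, which is still symmetric in $x\leftrightarrow y$. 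Writing $\sigma$ for the involution $x_i\leftrightarrow y_i$, this symmetry is $\sigma(T^{(x)}_{\omega_m}B)=T^{(x)}_{\omega_m}B$; combined with $\sigma B=B$ and $\sigma T^{(x)}_{\omega_m}\sigma=T^{(y)}_{\omega_m}$ it gives precisely $T^{(y)}_{\omega_m}B=T^{(x)}_{\omega_m}B$.

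I expect the main obstacle to lie in this last lemma: recognising that the identity to be proved is exactly the $x\leftrightarrow y$ symmetry of $T^{(x)}_{\omega_m}B$, and that this symmetry is supplied by Proposition~\ref{propt0} together with the homogeneity of the $H_{\pmb a}$ — the symmetry genuinely fails for a generic function fixed by $\sigma$, so an input of this type is essential. The remaining steps are bookkeeping: commuting the fully $y$-symmetric kernel $K_0$ past the Hecke operators, and tracking how $K^{(y)}_{\omega_m}$ and the shifts $\tau^{(y)}_i$ act on the explicit staircase products.
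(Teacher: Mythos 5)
Your proof is correct, and its skeleton --- Corollary~\ref{coroKm} combined with Proposition~\ref{propoinv} applied on the $y$-alphabet, finishing with the reversal computation for $K^{(y)}_{\omega_m}$ --- is the paper's own argument run in reverse: the paper starts from the symmetrized kernel and applies $t^{{m\choose 2}}\bar T^{(y)}_{\omega_m}$ and then $(\tau_1\cdots\tau_m K_{\omega_m})^{(y)}$ to both sides of \eqref{eqcoroKm}, whereas you start from the right-hand side of \eqref{eqidentity} and pull the same operators out of the sum. The one genuine difference is how the trade of $T^{(x)}_{\omega_m}$ for $T^{(y)}_{\omega_m}$ is justified. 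The paper gets it in one line: by \eqref{eqcoroKm} the kernel $K_m(x,y)$ is symmetric under $x\leftrightarrow y$, so $T^{(y)}_{\omega_m}$ may be used in place of $T^{(x)}_{\omega_m}$ in its definition. Your key lemma $T^{(x)}_{\omega_m}B=T^{(y)}_{\omega_m}B$ is exactly equivalent to that symmetry (because $K_0$ and $B$ are themselves invariant under $x_i\leftrightarrow y_i$), but you prove it independently from Proposition~\ref{propt0}, using that a uniform dilation $x\mapsto q^{-1}x$ commutes with the $T^{(x)}_i$ and that $H_{\pmb a}$ is homogeneous, which gives the manifestly $x\leftrightarrow y$-symmetric expansion $t^{-{m\choose 2}}T^{(x)}_{\omega_m}B=\sum_{\pmb a}q^{-|\pmb a|}t^{-{\rm Inv}(\pmb a)}H_{\pmb a}(x;t)H_{\pmb a}(y;t)$. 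This is valid, and it has the virtue of exhibiting the symmetry through an explicit Hall--Littlewood expansion rather than inheriting it from the diagonal form of $K_m$; note, though, that the same expansion is obtained more cheaply by substituting $y_i\mapsto q^{-1}y_i$ in Proposition~\ref{propt0} (this is precisely the unnamed corollary that follows it in the paper), a substitution in the passive alphabet that requires no commutation argument at all. One wording slip, harmless to the formulas: the reversal $y_j\mapsto y_{m+1-j}$ does not ``interchange the staircases'' $i+j\le m$ and $i+j\le m+1$; it converts them into the triangles $i<j$ and $i\le j$ respectively, which is what your displayed product for $K^{(y)}_{\omega_m}B$ correctly shows.
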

\begin{proof} We get from \eqref{eqcoroKm} that $K_m(x,y)$ is symmetric in $x$ and $y$.  Therefore, using $T_{\omega_m}^{(y)}$ instead of $T_{\omega_m}^{(x)}$ in $K_m(x,y)$, we obtain from \eqref{eqcoroKm} that
$$
K_0(x,y) \left[ \frac{\prod_{i+j \leq m} (1-tq^{-1} x_i y_j)}{\prod_{i+j \leq m+1}(1-q^{-1} x_i y_j)} \right] = \sum_{\Lambda} b_\Lambda(q,t) P_\Lambda(x;q,t) \left[ t^{m \choose 2} \bar T_{\omega_m}^{(y)} P_\Lambda(y;q,t) \right]
$$
Applying $(\tau_1 \cdots \tau_m K_{\omega_m})^{(y)}$ on both sides of the equation and using Proposition~\ref{propoinv}
thus yields 
$$
K_0(x,\tilde y) K_{\omega_m}^{(y)} \left[ \frac{\prod_{i+j \leq m} (1-t x_i y_j)}{\prod_{i+j \leq m+1}(1-x_i y_j)} \right] = \sum_{\Lambda} b_\Lambda(q,t) P_\Lambda(x;q,t) \left[ q^{|\pmb a|}t^{{\rm Inv} (\pmb a)}  P_\Lambda(y;q^{-1},t^{-1}) \right]
$$
The proposition is then immediate after checking that
$$
 K_{\omega_m}^{(y)} \left[ \frac{\prod_{i+j \leq m} (1-t x_i y_j)}{\prod_{i+j \leq m+1}(1-x_i y_j)} \right] =  \left[ \prod_{1\leq i <j \leq m}  \frac{1-tx_i y_j}{1-x_i y_j} \right]  \left[ \prod_{1\leq i \leq m}  \frac{1}{1-x_i y_i} \right] 
 $$   
\end{proof}

\begin{remark} \label{remscalar} The previous proposition suggests that there is a natural  
sesquilinear scalar product $\langle \cdot, \cdot \rangle'$ in $R_m$ such that
\begin{equation} \label{eqscalar}
\langle P_\Lambda(x;q,t), P_\Omega(x;q,t) \rangle' = \delta _{\Lambda \Omega}\, c_\Lambda(q,t)^{-1}=\delta _{\Lambda \Omega} \prod_{s \in \Lambda} \frac{1 -q^{\tilde a(s)+1}t^{\tilde \ell(s)}}{1 -q^{a(s)}t^{\ell(s)+1}}
\end{equation}
and for which the l.h.s. of \eqref{eqidentity} is a reproducing kernel.
Indeed, defining $\langle \cdot, \cdot \rangle'$ as
$$
\langle f(x;q,t), g(x;q,t) \rangle' =  t^{-{m \choose 2}} \left \langle f(x;q,t),   \overline{  \tau_1 \cdots \tau_m K_{\omega_m} \bar T_{\omega_m} g(x;q,t)} \right \rangle_m
$$
where $\overline{h(x;q,t)}=h(x;q^{-1},t^{-1})$ for any $h(x;q,t) \in R_m$, we have from Proposition~\ref{propoinv} that \eqref{eqscalar} holds. Note that, for $i=1,\dots,m-1$, the adjoint of $T_i$ with respect to this sesquilinear scalar product is $\bar T_i$ while, as was seen in \cite{L}, $T_i$ is self-adjoint with respect to the scalar product  $\langle \cdot, \cdot \rangle_m$. 
\end{remark}

Working in $N=m$ variables, we obtain a Cauchy-type identity for the non-symmetric Macdonald polynomials.
Note that since there is no restriction on $m$, the result holds for any number of variables.
\begin{proposition} \label{cauchy}
  For $\bar x=(x_1,\dots,x_m)$ and  $\bar y=(x_1,\dots,x_m)$, we have 
  \begin{equation} \label{cauchynew}
 K_0(\bar x,q \bar y)  \left[ \prod_{1\leq i <j \leq m}  \frac{1-tx_i y_j}{1-x_i y_j} \right]  \left[ \prod_{1\leq i \leq m}  \frac{1}{1-x_i y_i} \right] = \sum_{\eta \in \mathbb Z^m_{\geq 0}} a_\eta(q,t) E_\eta(\bar x;q,t) E_\eta(\bar y;q^{-1},t^{-1})
  \end{equation}
 where
 $$
a_\eta(q,t)^{-1} = \prod_{s \in \eta} \frac{1 -q^{\tilde a(s)+1}t^{\tilde \ell(s)}}{1 -q^{\tilde a(s)+1}t^{\ell(s)+1}}
 $$
\end{proposition}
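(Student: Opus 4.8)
The plan is to read off \eqref{cauchynew} as the $N=m$ specialization of the general Cauchy-type identity \eqref{eqidentity}. The first step is to determine which terms of the right-hand side of \eqref{eqidentity} survive when there are only $m$ variables. By Proposition~\ref{propostable}, $P_\Lambda(x_1,\dots,x_m;q,t)=0$ whenever $\ell(\Lambda)>m$, that is, whenever $\lambda\neq\emptyset$; the same is true of $P_\Lambda(\bar y;q^{-1},t^{-1})$. Hence the sum over all $m$-partitions $\Lambda=(\pmb a;\lambda)$ collapses to a sum over those of the form $(\pmb a;\emptyset)$ with $\pmb a\in\mathbb Z_{\geq0}^m$, and these are in bijection with the compositions $\eta=\pmb a\in\mathbb Z_{\geq0}^m$ indexing \eqref{cauchynew}.

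Next I would rewrite both sides in the $m$ variables $\bar x,\bar y$. On the right, \eqref{eqPE} gives $P_{(\pmb a;\emptyset)}(\bar x;q,t)=E_{\pmb a}(\bar x;q,t)$, and the same identity with parameters $q^{-1},t^{-1}$ gives $P_{(\pmb a;\emptyset)}(\bar y;q^{-1},t^{-1})=E_{\pmb a}(\bar y;q^{-1},t^{-1})$. On the left, since there are no variables beyond the $m$-th, the shifted alphabet $\tilde y=(qy_1,\dots,qy_m,y_{m+1},y_{m+2},\dots)$ reduces to $q\bar y$, so $K_0(x,\tilde y)$ becomes $K_0(\bar x,q\bar y)$, while the two finite products in \eqref{eqidentity} already depend only on $x_1,\dots,x_m$ and $y_1,\dots,y_m$ and are unchanged. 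This produces exactly the left-hand side of \eqref{cauchynew} together with $\sum_{\eta\in\mathbb Z_{\geq0}^m}a_{(\eta;\emptyset)}(q,t)\,E_\eta(\bar x;q,t)\,E_\eta(\bar y;q^{-1},t^{-1})$ on the right.

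It then remains only to verify that $a_{(\pmb a;\emptyset)}(q,t)=a_\eta(q,t)$ for $\eta=\pmb a$, which is the one substantive point. Comparing the formula for $a_\Lambda(q,t)^{-1}$ in \eqref{eqidentity} (with $\Lambda=(\pmb a;\emptyset)$) to the claimed $a_\eta(q,t)^{-1}$, the numerators $\prod_{s}(1-q^{\tilde a(s)+1}t^{\tilde\ell(s)})$ coincide, so it suffices to show $\prod_{s\in\eta}(1-q^{a(s)}t^{\ell(s)+1})=\prod_{s\in\eta}(1-q^{\tilde a(s)+1}t^{\ell(s)+1})$. Here the special shape of $(\pmb a;\emptyset)$ enters: every row of its diagram ends with a circle, so by the arm-length conventions introduced before Example~\ref{exal} the end-of-row circle always contributes $1$ to $a(s)$ but never to $\tilde a(s)$, giving $a(s)=\tilde a(s)+1$ for every cell $s$. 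The two denominators are therefore identical and the coefficients agree, completing the argument. The proof is thus essentially a direct specialization, and the only place requiring care — the coefficient identification — reduces to the elementary observation that every row of the diagram of $(\pmb a;\emptyset)$ terminates in a circle.
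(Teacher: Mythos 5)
Your proposal is correct and follows essentially the same route as the paper: specialize the general Cauchy-type identity \eqref{eqidentity} to $N=m$ variables, use Proposition~\ref{propostable} and \eqref{eqPE} to reduce the sum to terms $\Lambda=(\pmb a;\emptyset)$ with $P_{(\pmb a;\emptyset)}=E_{\pmb a}$, and identify the coefficients via $a(s)=\tilde a(s)+1$, which holds because every row of the diagram of $(\pmb a;\emptyset)$ ends with a circle. Your writeup is in fact slightly more careful than the paper's, which cites Corollary~\ref{coroKm} where the proposition containing \eqref{eqidentity} is clearly the intended reference.
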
  
\begin{proof} From Proposition~\ref{propostable}, we have that $P_{(\pmb a;\lambda)}(x_{(m)};q,t)=0$ if $\lambda \neq \emptyset$. Moreover, when $\lambda=\emptyset$, we get from
  \eqref{eqPE} that $P_{(\pmb a;\emptyset)}(x_{(m)};q,t)=E_{\eta}(x_{(m)};q,t)$  for $\eta=\pmb a=(a_1,\dots,a_m)$. The result is then immediate from Corollary~\ref{coroKm} since the diagram of the composition $\eta$ is equal to that of the $m$-partition $(\pmb a;\emptyset)$, and since $a(s)=\tilde a(s)+1$ given that every row of the diagram of $\eta$ ends with a circle.
\end{proof}  
\begin{remark} \label{remarkcauchy} A different Cauchy-type identity for the non-symmetric Macdonald polynomials was provided in \cite{MN}.  For $\bar x=(x_1,\dots,x_m)$ and  $\bar y=(x_1,\dots,x_m)$, it reads in our language as
\begin{equation} \label{cauchyold}
K_0(\bar x,\bar y)  \left[ \prod_{1\leq j <i \leq m}  \frac{1-x_i y_j}{1-t x_i y_j} \right]  \left[ \prod_{1\leq i \leq m}  \frac{1}{1-t x_i y_i} \right] 
=  \sum_{\eta \in \mathbb Z^m_{\geq 0}}   a_\eta(q,t) E_\eta(\bar x;q,t) E_\eta(\bar y;q^{-1},t^{-1})
\end{equation}
 We will now see that
 \eqref{cauchynew} and \eqref{cauchyold} are essentially equivalent by recovering \eqref{cauchynew} from \eqref{cauchyold}. First observe that
 $$
 a_\eta(q^{-1},t^{-1})=  t^{-|\eta|} a_\eta(q,t) 
 $$
 and 
 $$
 E_\eta(t \bar x;q^{-1},t^{-1})=t^{|\eta|} E_\eta(\bar x ;q^{-1},t^{-1})
 $$
We also have using \eqref{eqkernel0} that
$$
K_0(\bar x,\bar y) \Big |_{(q,t) \mapsto (q^{-1},t^{-1})}= \sum_\lambda z_{\lambda}(q^{-1},t^{-1})^{-1}  p_\lambda(\bar x) p_\lambda(\bar y) =  \sum_\lambda z_{\lambda}(q,t)^{-1}  p_\lambda(q \bar x/t) p_\lambda(\bar y) = K_0(q \bar x/t,\bar y)
$$
 Letting  $(q,t) \mapsto (q^{-1},t^{-1})$ in \eqref{cauchyold} followed by $x_i \mapsto tx_i$, for $i=1,\dots,m$, 
 thus yields 
\begin{equation*} 
K_0(q \bar x,\bar y)    \left[ \prod_{1\leq j <i \leq m}  \frac{1-tx_i y_j}{1-x_i y_j} \right]  \left[ \prod_{1\leq i \leq m}  \frac{1}{1-x_i y_i} \right] 
  =  \sum_{\eta \in \mathbb Z^m_{\geq 0}}    a_\eta(q,t) E_\eta(\bar x ;q^{-1},t^{-1}) E_\eta(\bar y;q,t)
\end{equation*}
Interchanging $\bar x$ and $\bar y$ then leads to \eqref{cauchynew}.
\end{remark}

Finally, we get from Proposition~\ref{propounitriang} and Theorem~\ref{theoortho} that the $m$-symmetric Macdonald polynomials also have and orthogonality/unitriangularity  characterization akin to that of the usual Macdonald polynomials.
\begin{proposition} \label{propdefmacdo} The $m$-symmetric Macdonald polynomials form the 
  unique basis $\{ P_\Lambda(x;q,t)\}_\Lambda$ of $R_m$ such that
  \begin{enumerate}
\item $\displaystyle{\langle P_\Lambda(x;q,t)\, ,\, P_\Omega(x;q,t)  \rangle_m = 0 {\rm ~if~} \Lambda\neq\Omega}  $ \quad {\rm (orthogonality)}    
\item $\displaystyle{P_\Lambda(x;q,t)= m_\Lambda + \sum_{\Omega < \Lambda} d_{\Lambda \Omega}(q,t) \, m_\Omega}$ \quad {\rm (unitriangularity)}
  \end{enumerate}
for certain coefficients $d_{\Lambda \Omega}(q,t) \in \mathbb Q(q,t)$. We recall that the dominance order on $m$-partitions was defined in \eqref{deforder}.  
\end{proposition}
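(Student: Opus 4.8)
The plan is to dispatch existence immediately and then establish uniqueness by the standard Gram--Schmidt argument adapted to a partially ordered index set. For existence there is nothing to do: the $m$-symmetric Macdonald polynomials satisfy property (2) by Proposition~\ref{propounitriang} and property (1) by Theorem~\ref{theoortho}.

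For uniqueness I would let $\{Q_\Lambda\}_\Lambda$ be any basis of $R_m$ satisfying (1) and (2) and show $Q_\Lambda=P_\Lambda$ for every $\Lambda$. First I would record that the scalar product $\langle\cdot,\cdot\rangle_m$ is graded: since $\langle p_\Lambda(x;t), p_\Omega(x;t)\rangle_m=0$ whenever $\Lambda\neq\Omega$ by \eqref{eqscal} (in particular whenever $|\Lambda|\neq|\Omega|$), homogeneous components of distinct degrees are orthogonal. It therefore suffices to work inside a single graded piece, which is spanned by the finitely many $m_\Lambda$ with $|\Lambda|$ fixed; the dominance order restricted to this piece is thus a finite poset and induction on it is well-founded.

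Since $\{P_\Omega\}_\Omega$ is unitriangular with respect to $\{m_\Omega\}_\Omega$, inverting this unitriangular change of basis and feeding the $m$-basis expansion of $Q_\Lambda$ through it gives
\begin{equation*}
Q_\Lambda = P_\Lambda + \sum_{\Omega < \Lambda} e_{\Lambda\Omega}\, P_\Omega
\end{equation*}
for some $e_{\Lambda\Omega}\in\mathbb Q(q,t)$ (the leading $m_\Lambda$ produces $P_\Lambda$, while every other monomial $m_\Omega$ with $\Omega<\Lambda$ expands into $P_{\Omega'}$ with $\Omega'\leq\Omega<\Lambda$). Arguing by induction on the dominance order, suppose $Q_\Omega=P_\Omega$ for all $\Omega<\Lambda$; the minimal elements form the base case, where (2) forces $Q_\Lambda=m_\Lambda=P_\Lambda$ outright. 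For a fixed $\Omega<\Lambda$, pairing the displayed identity with $P_\Omega$ and using the orthogonality of the $P$'s from Theorem~\ref{theoortho} gives $\langle Q_\Lambda, P_\Omega\rangle_m = e_{\Lambda\Omega}\,\langle P_\Omega, P_\Omega\rangle_m$. On the other hand, the induction hypothesis gives $P_\Omega=Q_\Omega$, so by the orthogonality of the $Q$'s (property (1), with $\Omega\neq\Lambda$) the left-hand side vanishes. Since Theorem~\ref{theoortho} expresses $\langle P_\Omega, P_\Omega\rangle_m$ as a ratio of products of factors $1-q^{\alpha}t^{\beta}$, none of which is the zero element of $\mathbb Q(q,t)$, we conclude $e_{\Lambda\Omega}=0$ for every $\Omega<\Lambda$, i.e.\ $Q_\Lambda=P_\Lambda$.

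I do not expect a serious obstacle here: the statement is purely a linear-algebra uniqueness result, and everything substantive (orthogonality, the explicitly nonzero squared norm, unitriangularity) has already been established. The only points requiring care are the bookkeeping ones---verifying that the scalar product is graded so that the partial-order induction takes place in a finite-dimensional component, and noting that the squared norms are invertible in $\mathbb Q(q,t)$---and both are immediate from \eqref{eqscal} and the formula in Theorem~\ref{theoortho}.
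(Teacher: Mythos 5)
Your proof is correct and is essentially the argument the paper has in mind: the paper gives no written proof, simply asserting that the characterization follows from Proposition~\ref{propounitriang} (unitriangularity) and Theorem~\ref{theoortho} (orthogonality and nonzero squared norms), exactly the two ingredients you use. Your writeup just fills in the standard Gram--Schmidt-type uniqueness argument, including the bookkeeping points (finiteness of each graded piece so the dominance-order induction is well-founded, and invertibility of the squared norms in $\mathbb Q(q,t)$), all of which are handled correctly.
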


\begin{appendix} 
  \section{Proof of Theorem~\ref{theoinclusion}} \label{appA}
  \begin{proof}
  We will prove the result in $N$ variables. The case $N \to \infty$ will then be immediate.
  Recall that
  $$
P_\Lambda(x_1,\dots,x_N;q,t) = \frac{1}{u_{\Lambda,N}(t)} \mathcal S_{m+1,N}^t E_{\eta_{\Lambda,N}}(x_1,\dots,x_N;q,t)
$$

First consider the case $\Lambda=(\pmb a;b^{N-m})$ for any $b \geq 0$.
The case $b=0$ was proven in \cite{L} (the proof is essentially the same as the one we will provide in the case $b>0$) so we can assume that $b >0$.
In that case, there are two possibilities for the $(m+1)$-circle.  It can be added in the uppermost row of size $b$ to give $\Lambda^b=(\pmb a,b;b^{N-m-1})$ or in a row of size 0 to give $\Lambda^0=(\pmb a,0;b^{N-m})$. But from Proposition~\ref{propostable}, 
we have that $P_{\Lambda^0}(x;q,t)=0$  in $N$ variables given that $b>0$ by assumption. The only possibility is thus $\Lambda^b$.  Observe that in this case, 
 all the rows above that of the $(m+1)$-circle in $\Lambda^b$ end with a circle and thus do not contribute to $\psi_{\Lambda^b/\Lambda}$.  As such, we need to show that
 $i(P_\Lambda)=P_{\Lambda^b}$.

Recall from \eqref{eqLR} that $\mathcal S_{m+1,N}^t= \mathcal S_{m+2,N}^t \mathcal R_{m+1,N}$.  Using $\eta_{\Lambda,N}=(a_1,\dots,a_m,b^{N-m})$, we obtain from \eqref{property2} that
$$
\mathcal R_{m+1,N} E_{\eta_{\Lambda,N}}(x_1,\dots,x_N;q,t)= (1+t+t^2+\cdots+t^{N-m-1}) E_{\eta_{\Lambda,N}}(x_1,\dots,x_N;q,t)
$$
Hence, in order to prove that $i(P_\Lambda)=P_{\Lambda^b}$, 
we need to 
show that
$$
\frac{1}{u_{\Lambda,N}(t)} \mathcal S_{m+2,N}^t (1+t+t^2+\cdots+t^{N-m-1})E_{\eta_{\Lambda,N}}=\frac{1}{u_{\Lambda^b,N}(t)} \mathcal S_{m+2,N}^t E_{\eta_{\Lambda^b,N}}
$$
But this is easily seen to be the case given that $\eta_{\Lambda,N}=\eta_{\Lambda^b,N}$ and
$$
\frac{(1+t+\dots+t^{N-m-1})}{u_{\Lambda,N}(t)}= \frac{(1+t^{-1}+\dots+t^{-(N-m-1)})}{u_{\Lambda,N}(t)}\frac{t^{(N-m)(N-m-1)/2}}{t^{(N-m-1)(N-m-2)/2}} = \frac{1}{u_{\Lambda^b,N}(t)}
$$

We now consider the general case.  We let $\eta_{\Lambda,N}=(a_1,\dots,a_m,\lambda_{N-m},\dots,\lambda_2,\lambda_1)$, where we consider that $\lambda_{N-m}$ can be equal to 0.  Observe that we can assume that $\lambda_{N-m}<\lambda_1$ since the case $\lambda_{N-m}=\lambda_1$ corresponds to the case $\Lambda=(\pmb a;b^{N-m})$, which was already established.

We will proceed by induction on $N-m$.  The theorem  is readily seen to hold when $N=m+1$ since in this case $\Lambda$ can only be of the form $\Lambda=(\pmb a;b)$, which has been seen to hold.  In the following, we will denote by $\lambda \setminus b$ the partition obtained by removing one row of length $b$ from $\lambda$ (similarly  $\lambda \setminus \{b ,c \}$ will stand for the partition obtained by removing the entries $b$ and $c$ from $\lambda$).

We will first show that the theorem holds for  the $(m+1)$-partitions $\Omega=(\pmb a, b ; \lambda \setminus b)$ such that $b \neq \lambda_{N-m}$. For simplicity, we will let $s=\lambda_{N-m}$
and use the notation $\Lambda^+=(\pmb a,s;\lambda \setminus s)$
and $\Omega^+=(\pmb a,s, b ; \lambda \setminus \{b,s\})$.
By induction on  $N-m$, we have that
\begin{equation} \label{eqinduction}
\frac{1}{u_{\Lambda^+,N}(t)} \mathcal S_{m+2,N}^t E_{\eta_{\Lambda^+,N}}= \sum_{\Delta}
\psi_{\Delta/\Lambda^+} \frac{1}{u_{\Delta,N}(t)} \mathcal S_{m+3,N}^t E_{\eta_{\Delta,N}}
\end{equation}
Our goal is to show that
\begin{equation}
\frac{1}{u_{\Lambda,N}(t)} \mathcal S_{m+1,N}^t E_{\eta_{\Lambda,N}}\Big|_{P_{\Omega}}=  \psi_{\Omega^+/\Lambda^+}
\end{equation}
This will prove our claim  since $\psi_{\Omega^+/\Lambda^+}=\psi_{\Omega/\Lambda}$ given
that the only difference between $\Lambda^+$ and $\Lambda$ is the extra $(m+1)$-circle in row $s$ (which does no affect $\psi_{\Omega^+/\Lambda^+}$).

Using $\mathcal R_{m+1,N}=1 +T_{m+1} \mathcal R_{m+2,N}$, we get
\begin{equation} \label{eqst}
\mathcal S_{m+1,N}^t=\mathcal S_{m+2,N}^t \mathcal R_{m+1,N}= \mathcal S_{m+2,N}^t  + \mathcal S_{m+2,N}^t T_{m+1} \mathcal R_{m+2,N}
\end{equation}
Now, when acting on $E_{\eta_{\Lambda,N}}$, the operator $\mathcal S_{m+2,N}^t$
will produce a linear combination of $E_\nu$'s such that the first $m+1$ entries of $\nu$ are $a_1,\dots,a_m,s$ with $s\neq b$. This implies that  the term $\mathcal S_{m+2,N}^t$ in the r.h.s. of \eqref{eqst} will not contribute to the coefficient $\psi_{\Omega/\Lambda}$. Therefore, using
\begin{align} 
 \mathcal S_{m+2,N}^t T_{m+1} \mathcal R_{m+2,N}
 & =\mathcal L_{m+2,N} \mathcal S_{m+3,N}^t T_{m+1} \mathcal R_{m+2,N}  \nonumber \\
 & = \mathcal L_{m+2,N} T_{m+1} \mathcal S_{m+3,N}^t \mathcal R_{m+2,N}= \mathcal L_{m+2,N} T_{m+1} \mathcal S_{m+2,N}^t \nonumber
\end{align}
we obtain from \eqref{eqst} that
$$
\frac{1}{u_{\Lambda,N}(t)} \mathcal S_{m+1,N}^t E_{\eta_{\Lambda,N}}\Big|_{P_{\Omega}}=
\frac{1}{u_{\Lambda,N}(t)} \mathcal L_{m+2,N} T_{m+1} \mathcal S_{m+2,N}^t  E_{\eta_{\Lambda,N}} \Big|_{P_{\Omega}}
$$
Given that $\eta_{\Lambda,N}=\eta_{\Lambda^+,N}$, we then obtain
$$
\frac{1}{u_{\Lambda,N}(t)} \mathcal S_{m+1,N}^t E_{\eta_{\Lambda,N}}\Big|_{P_{\Omega}}=
\frac{u_{\Lambda^+,N}(t)}{u_{\Lambda,N}(t)} \mathcal L_{m+2,N} T_{m+1} \left( \frac{1}{u_{\Lambda^+,N}(t)} \mathcal S_{m+2,N}^t E_{\eta_{\Lambda^+,N}} \right)\Bigg|_{P_{\Omega}} 
$$
From \eqref{eqinduction}, we then have
$$
\frac{1}{u_{\Lambda,N}(t)} \mathcal S_{m+1,N}^t E_{\eta_{\Lambda,N}}\Big|_{P_{\Omega}}=  \sum_{\Delta}
\psi_{\Delta/\Lambda^+} \frac{1}{u_{\Delta,N}(t)} \frac{u_{\Lambda^+,N}(t)}{u_{\Lambda,N}(t)} \mathcal L_{m+2,N} T_{m+1}  \mathcal S_{m+3,N}^t E_{\eta_{\Delta,N}}\Big|_{P_{\Omega}}
$$
Now, we observe that the only way to
 generate $P_\Omega$ is to act with $T_{m+1}$ on  $\mathcal S_{m+3,N}^t E_{\eta_{\Omega^+,N}}$ (otherwise the resulting indexing composition will not have a $b$ in the $(m+1)$-th position).  From \eqref{property2}, we see that
$$
T_{m+1} \mathcal S_{m+3,N}^t E_{\eta_{\Omega^+,N}} = \mathcal S_{m+3,N}^t T_{m+1} E_{\eta_{\Omega^+,N}} = t \mathcal S_{m+3,N}^t  E_{\eta_{\Omega,N}} + A(q,t) \mathcal S_{m+3,N}^t  E_{\eta_{\Omega^+,N}}   
$$
for some coefficient $A(q,t)$. This yields
\begin{align*}
\frac{1}{u_{\Lambda,N}(t)} \mathcal S_{m+1,N}^t E_{\eta_{\Lambda,N}}\Big|_{P_{\Omega}}& = \psi_{\Omega^+/\Lambda^+}
\frac{u_{\Lambda^+,N}(t)}{u_{\Lambda,N}(t)} \frac{u_{\Omega,N}(t)}{u_{\Omega^+,N}(t)} \mathcal L_{m+2,N}  \left( \frac{t}{u_{\Omega,N}(t)} \mathcal S_{m+3,N}^t E_{\eta_{\Omega,N}} \right) \Bigg|_{P_{\Omega}}\\&= \psi_{\Omega^+/\Lambda^+}
\frac{u_{\Lambda^+,N}(t)}{u_{\Lambda,N}(t)} \frac{u_{\Omega,N}(t)}{u_{\Omega^+,N}(t)}  \left( \frac{t}{u_{\Omega,N}(t)} \mathcal S_{m+2,N}^t E_{\eta_{\Omega,N}} \right) \Bigg|_{P_{\Omega}} \\
& =  t\psi_{\Omega^+/\Lambda^+}
\frac{u_{\Lambda^+,N}(t)}{u_{\Lambda,N}(t)}  \frac{u_{\Omega,N}(t)}{u_{\Omega^+,N}(t)}
\end{align*}
The result thus holds since it is not too difficult to show that
$$
t
\frac{u_{\Lambda^+,N}(t)}{u_{\Lambda,N}(t)}  \frac{u_{\Omega,N}(t)}{u_{\Omega^+,N}(t)}=1
$$
given that
$$
\frac{u_{\Lambda^+,N}(t)}{u_{\Lambda,N}(t)}=\frac{[n-1]_{1/t}!}{t^{N-m-1} [n]_{1/t}!} \qquad {\rm and} \qquad   \frac{u_{\Omega,N}(t)}{u_{\Omega^+,N}(t)} =  \frac{t^{N-m-2} [n]_{1/t}!}{[n-1]_{1/t}!}
$$
with $n$  the number of occurrences of $s$ in $\lambda_{N-m},\dots,\lambda_1$.

Finally, we have to consider the case where 
the $(m+1)$-partition $\Omega=(\pmb a, b ; \lambda \setminus b)$ is such that $b = \lambda_{N-m}$. This case turns out to be somewhat more complicated.
This time, we use the relation
$$
\mathcal R_{m+1,N}=  \mathcal R_{m+1,N-1}(1+T_{N-1})  - \mathcal R_{m+1,N-2}T_{N-1}
$$
to get from \eqref{eqLR} that
\begin{align}
\mathcal S_{m+1,N}^t & =  \mathcal S_{m+2,N}^t \bigl(  \mathcal R_{m+1,N-1}(1+T_{N-1})  - \mathcal R_{m+1,N-2}T_{N-1}  \bigr) \nonumber \\
& = \mathcal L_{m+2,N}' \mathcal S_{m+2,N-1}^t   \mathcal R_{m+1,N-1} (1+ T_{N-1}) -  \mathcal L_{m+2,N}'  \mathcal L_{m+2,N-1}' \mathcal S_{m+2,N-2}^t  \mathcal R_{m+1,N-2} T_{N-1}  \nonumber \\
& =   \mathcal L_{m+2,N}' \mathcal S_{m+1,N-1}^t   + \mathcal L_{m+2,N}' \mathcal S_{m+1,N-1}^t T_{N-1}-  \mathcal L_{m+2,N}'  \mathcal L_{m+2,N-1}' \mathcal S_{m+1,N-2}^t   T_{N-1}
\label{eq12}
\end{align}
We first establish the result when $\lambda_1=\lambda_2$.  In this case, $T_{N-1} E_{\eta_{\Lambda,N}}= t  E_{\eta_{\Lambda,N}}$, which implies from the previous equation that
 \begin{align}
    \frac{1}{u_{\Lambda,N}(t)} \mathcal S_{m+1,N}^t E_{\eta_{\Lambda,N}}   
   =
\frac{(1+t)}{u_{\Lambda,N}(t)}  \mathcal L_{m+2,N}'  \mathcal S_{m+1,N-1}^t E_{\eta_{\Lambda,N}}  - \frac{t}{u_{\Lambda,N}(t)}  \mathcal L_{m+2,N}'  \mathcal L_{m+2,N-1}'  \mathcal S_{m+1,N-2}^t E_{\eta_{\Lambda,N}}   \label{eqbi}
 \end{align}
In order to use induction, we will need the relations
\begin{equation}  \label{eqphiq} 
   \Phi_q \mathcal S_{m+2,N}^t 
   = \mathcal S_{m+1,N-1}^t \Phi_q \qquad {\rm and} \qquad
\Phi_q^2 \mathcal S_{m+3,N}^t = \mathcal S_{m+1,N-2}^t \Phi_q^2
\end{equation}
where  $\Phi_q$, which was defined in \eqref{eqPhi}, is such that
$\Phi_q  E_{\eta_{\hat \Lambda,N}}= t^{r-N} E_{\eta_{\Lambda,N}}$ with
$\hat \Lambda=(\lambda_1-1,\pmb a; \lambda \setminus \lambda_1)$
and $r$ the row in the diagram of $\eta_{\hat \Lambda,N}$ corresponding to the entry $\lambda_1-1$ (the highest row of size $\lambda_1-1$ in the diagram of $\eta_{\hat \Lambda,N}$).  The first term in the r.h.s. of \eqref{eqbi} thus gives
$$
\frac{(1+t)}{u_{\Lambda,N}(t)}  \mathcal L_{m+2,N}'  \mathcal S_{m+1,N-1}^t E_{\eta_{\Lambda,N}}  = \frac{t^{-r+N}(1+t)}{u_{\Lambda,N}(t)}  \mathcal L_{m+2,N}' \Phi_q \mathcal S_{m+2,N}^t E_{\eta_{\hat \Lambda,N}} 
$$
Hence, by induction on $N-m$ (using a similar expression as in \eqref{eqinduction}) we have that the terms $\mathcal S_{m+3,N}^t E_{\eta_{\hat \Delta,N}}$ that can appear in  $\mathcal S_{m+2,N}^t  E_{\eta_{\hat \Lambda,N}}$ are such that $\hat \Delta = (\lambda_1-1,\pmb a,\lambda_i; \lambda \setminus \{ \lambda_1 , \lambda_i\})$.
Letting $\Delta= (\pmb a,\lambda_i; \lambda \setminus  \lambda_i) $, we observe
using $ \mathcal L_{m+2,N}'  \mathcal S_{m+2,N-1}^t=  \mathcal S_{m+2,N}^t$ 
that
$$
 \mathcal L_{m+2,N}' \Phi_q  \mathcal S_{m+3,N}^t E_{\eta_{\hat \Delta,N}} =  \mathcal L_{m+2,N}' \mathcal S_{m+2,N-1}^t \Phi_q E_{\eta_{\hat \Delta,N}} = t^{r-N}    \mathcal S_{m+2,N}^t E_{\eta_{\Delta,N}}  
$$
since the row 
in the diagram of $\eta_{\hat \Delta,N}$ corresponding to the entry $\lambda_1-1$
is the same as that of  the entry $\lambda_1-1$ in the diagram of $\eta_{\hat \Lambda,N}$. Therefore, focusing on the term $\Omega$, we have
$$
\frac{(1+t)}{u_{\Lambda,N}(t)}  \mathcal L_{m+2,N}'  \mathcal S_{m+1,N-1}^t E_{\eta_{\Lambda,N}} \Big |_{P_{\Omega}} = (1+t) \psi_{\hat \Omega/\hat \Lambda} \frac{u_{\hat \Lambda,N}(t)}{u_{\Lambda,N}(t)}  \frac{u_{\Omega,N}(t)}{u_{\hat \Omega,N}(t)} 
$$
where $\hat \Omega = (\lambda_1-1,\pmb a,s; \lambda \setminus \{ \lambda_1 , s\})$.  Doing a similar analysis for the second term in the rhs of \eqref{eqbi}, we obtain that
$$
-\frac{t}{u_{\Lambda,N}(t)}  \mathcal L_{m+2,N}'  \mathcal L_{m+2,N-1}'   \mathcal S_{m+1,N-1}^t E_{\eta_{\Lambda,N}} \Big |_{P_{\Omega}} = -t \psi_{\tilde \Omega/\tilde \Lambda}
 \frac{u_{\tilde \Lambda,N}(t)}{u_{\Lambda,N}(t)}  \frac{u_{\Omega,N}(t)}{u_{\tilde \Omega,N}(t)} 
$$
 with
  $\tilde \Lambda=(\lambda_2-1,\lambda_1-1,\pmb a; \lambda \setminus \{\lambda_1, \lambda_2\} )$ and $\tilde \Omega=(\lambda_2-1,\lambda_1-1,\pmb a,s ; \lambda \setminus \{\lambda_1, \lambda_2\})$. 
We thus have to prove that
\begin{equation} \label{eqimp1}
 (1+t) \psi_{\hat \Omega/\hat \Lambda} \frac{u_{\hat \Lambda,N}(t)}{u_{\Lambda,N}(t)}  \frac{u_{\Omega,N}(t)}{u_{\hat \Omega,N}(t)} - t \psi_{\tilde \Omega/\tilde \Lambda}
 \frac{u_{\tilde \Lambda,N}(t)}{u_{\Lambda,N}(t)}  \frac{u_{\Omega,N}(t)}{u_{\tilde \Omega,N}(t)} = \psi_{\Omega/\Lambda}
 \end{equation}
Since $s <\lambda_1=\lambda_2$ by assumption, we have that
 \begin{equation} \label{equ}
 \frac{u_{ \Omega,N}(t)}{u_{\Lambda,N}(t)} = \frac{[n-1]_{1/t}!}{t^{N-m-1}[n]_{1/t}!},\quad  \frac{u_{\hat \Lambda,N}(t)}{u_{\hat \Omega,N}(t)}= \frac{t^{N-m-2}[n]_{1/t}!}{[n-1]_{1/t}!}, \quad {\rm and } \quad  \frac{u_{\tilde \Lambda,N}(t)}{u_{\tilde \Omega,N}(t)}= \frac{t^{N-m-3}[n]_{1/t}!}{[n-1]_{1/t}!}
 \end{equation}
 with $n$ the number of occurrences of $s$ in $\lambda_{N-m},\dots,\lambda_1$.
 It thus remains  to prove that
 \begin{equation} \label{eqlast}
 \frac{(1+t)}{t} \psi_{\hat \Omega/\hat \Lambda}  - \frac{1}{t} \psi_{\tilde \Omega/\tilde \Lambda}  = \psi_{\Omega/\Lambda}
 \end{equation}
 Comparing  $\psi_{\hat \Omega/\hat \Lambda}$, $\psi_{\tilde \Omega/\tilde \Lambda}$ and  $\psi_{\Omega/\Lambda}$, we get that their factors are identical except in the
 rows corresponding to $\lambda_1$ and $\lambda_2$ in $\Lambda$ (they are consecutive rows, with that of $\lambda_2$ just above that of $\lambda_1$).
 The squares in those rows contribute
 $$
 \frac{1-q^{\lambda_1+1}t^{\ell-1}}{1-q^{\lambda_1+1}t^{\ell}}, \qquad 1 \qquad {\rm and} \qquad \frac{(1-q^{\lambda_1+1}t^{\ell-1})}{(1-q^{\lambda_1+1}t^{\ell})} \frac{(1-q^{\lambda_1+1}t^{\ell-2})}{(1-q^{\lambda_1+1}t^{\ell-1})}
 $$
respectively to $\psi_{\hat \Omega/\hat \Lambda}$, $\psi_{\tilde \Omega/\tilde \Lambda}$ and  $\psi_{\Omega/\Lambda}$,
where $\ell$ is the leg-length of the square in the row of $\lambda_2$ in the diagram of $\Omega$.  We therefore get that \eqref{eqlast} holds given that
$$
\frac{(1+t)}{t}  \frac{(1-q^{\lambda_1+1}t^{\ell-1})}{(1-q^{\lambda_1+1}t^{\ell})} - \frac{1}{t}= \frac{1-q^{\lambda_1+1}t^{\ell-2}}{1-q^{\lambda_1+1}t^{\ell}} =
\frac{(1-q^{\lambda_1+1}t^{\ell-1})}{(1-q^{\lambda_1+1}t^{\ell})} \frac{(1-q^{\lambda_1+1}t^{\ell-2})}{(1-q^{\lambda_1+1}t^{\ell-1})}
$$

Finally, we need to prove the result when $\lambda_1>\lambda_2$.  In this case,
$$
T_{N-1} E_{\eta_{\Lambda,N}} = t E_{s_{N-1}(\eta_{\Lambda,N})} + \frac{(t-1)}{1-q^{\lambda_1-\lambda_2}t^r} E_{\eta_{\Lambda,N}} 
$$
where $r$ is the difference between the row of $\lambda_2$ and that of $\lambda_1$ in the diagram associated to $\eta_{\Lambda,N}$.
Using  \eqref{eq12}, we obtain this time
 \begin{align*}
\frac{1}{u_{\Lambda,N}(t)} \mathcal S_{m+1,N}^t E_{\eta_{\Lambda,N}}   & =
\frac{(t-q^{\lambda_1-\lambda_2}t^r)}{u_{\Lambda,N}(t)(1-q^{\lambda_1-\lambda_2}t^r)}  \mathcal L_{m+2,N}'  \mathcal S_{m+1,N-1}^t E_{\eta_{\Lambda,N}} \\
& \qquad -
\frac{(t-1)}{u_{\Lambda,N}(t)(1-q^{\lambda_1-\lambda_2}t^r)}   \mathcal L_{m+2,N}'  \mathcal L_{m+2,N-1}'  \mathcal S_{m+1,N-2}^t E_{\eta_{\Lambda,N}}   \\
& \qquad + \frac{t}{u_{\Lambda,N}(t)}  \mathcal L_{m+2,N}'  \mathcal S_{m+1,N-1}^t E_{s_{N-1}(\eta_{\Lambda,N})} 
\\
& \qquad  
- \frac{t}{u_{\Lambda,N}(t)}  \mathcal L_{m+2,N}'  \mathcal L_{m+2,N-1}'  \mathcal S_{m+1,N-2}^t E_{s_{N-1}(\eta_{\Lambda,N})}  
 \end{align*}
Using \eqref{eqphiq} and doing a similar analysis as in the $\lambda_1=\lambda_2$ case, we obtain by induction that
$$
\frac{(t-q^{\lambda_1-\lambda_2}t^r)}{u_{\Lambda,N}(t)(1-q^{\lambda_1-\lambda_2}t^r)}   \mathcal L_{m+2,N}'  \mathcal S_{m+1,N-1}^t E_{\eta_{\Lambda,N}} \Big |_{P_{\Omega}} = \frac{(t-q^{\lambda_1-\lambda_2}t^r)}{(1-q^{\lambda_1-\lambda_2}t^r)}  \psi_{\hat \Omega/\hat \Lambda} \frac{u_{\hat \Lambda,N}(t)}{u_{\Lambda,N}(t)}  \frac{u_{\Omega,N}(t)}{u_{\hat \Omega,N}(t)} 
$$
and that
$$
- 
\frac{(t-1)}{u_{\Lambda,N}(t)(1-q^{\lambda_1-\lambda_2}t^r)}   \mathcal L_{m+2,N}'  \mathcal L_{m+2,N-1}'   \mathcal S_{m+1,N-1}^t E_{\eta_{\Lambda,N}} \Big |_{P_{\Omega}} =  -
\frac{(t-1)}{(1-q^{\lambda_1-\lambda_2}t^r)}  \psi_{\tilde \Omega/\tilde \Lambda}
 \frac{u_{\tilde \Lambda,N}(t)}{u_{\Lambda,N}(t)}  \frac{u_{\Omega,N}(t)}{u_{\tilde \Omega,N}(t)} 
$$
 Letting $\hat \Gamma=(\lambda_2-1,\pmb a;\lambda \setminus \lambda_2)$, $\hat \Gamma^s=(\lambda_2-1,\pmb a,s;\lambda \setminus \lambda_2)$,  
 $\tilde \Gamma=(\lambda_1-1,\lambda_2-1,\pmb a;\lambda \setminus \{ \lambda_1,\lambda_2\})$, and $\tilde \Gamma^s=(\lambda_1-1,\lambda_2-1,\pmb a,s;\lambda \setminus \{ \lambda_1,\lambda_2\})$  as well as making use of the relation (see Lemma~55 in \cite{L})
 $$
\mathcal S^t_{m+1,N} E_{s_{N-1}(\eta_{\Omega,N})} = \frac{(1-q^{\lambda_1-\lambda_2}t^{r+1})}{t(1-q^{\lambda_1-\lambda_2}t^r)} E_{\eta_{\Omega,N}}
 $$
 we get similarly by induction that
 $$
 \frac{t}{u_{\Lambda,N}(t)}  \mathcal L_{m+2,N}'  \mathcal S_{m+1,N-1}^t E_{s_{N-1}(\eta_{\Lambda,N})} \Big |_{P_{\Omega}} = \frac{(1-q^{\lambda_1-\lambda_2}t^{r+1})}{(1-q^{\lambda_1-\lambda_2}t^r)} \psi_{\hat \Gamma^s/\hat \Gamma} \frac{u_{\hat \Gamma,N}(t)}{u_{\Lambda,N}(t)}  \frac{u_{\Omega,N}(t)}{u_{\hat \Gamma^s,N}(t)} 
    $$
   and
   $$
- \frac{t}{u_{\Lambda,N}(t)}  \mathcal L_{m+2,N}'  \mathcal L_{m+2,N-1}'  \mathcal S_{m+1,N-2}^t E_{s_{N-1}(\eta_{\Lambda,N})}  \Big |_{P_{\Omega}} =-\frac{(1-q^{\lambda_1-\lambda_2}t^{r+1})}{(1-q^{\lambda_1-\lambda_2}t^r)}   \psi_{\tilde \Gamma^s/\tilde \Gamma}
\frac{u_{\tilde \Gamma,N}(t)}{u_{\Lambda,N}(t)}  \frac{u_{\Omega,N}(t)}{u_{\tilde \Gamma^s,N}(t)}
$$
We thus have to show that
\begin{align}
&  \frac{(t-q^{\lambda_1-\lambda_2}t^r)}{(1-q^{\lambda_1-\lambda_2}t^r)}  \psi_{\hat \Omega/\hat \Lambda} \frac{u_{\hat \Lambda,N}(t)}{u_{\Lambda,N}(t)}  \frac{u_{\Omega,N}(t)}{u_{\hat \Omega,N}(t)} - \frac{(t-1)}{(1-q^{\lambda_1-\lambda_2}t^r)}  \psi_{\tilde \Omega/\tilde \Lambda}
  \frac{u_{\tilde \Lambda,N}(t)}{u_{\Lambda,N}(t)}  \frac{u_{\Omega,N}(t)}{u_{\tilde \Omega,N}(t)} \nonumber \\
  & \qquad  + \frac{(1-q^{\lambda_1-\lambda_2}t^{r+1})}{(1-q^{\lambda_1-\lambda_2}t^r)} \psi_{\hat \Gamma^s/\hat \Gamma} \frac{u_{\hat \Gamma,N}(t)}{u_{\Lambda,N}(t)}  \frac{u_{\Omega,N}(t)}{u_{\hat \Gamma^s,N}(t)} -\frac{(1-q^{\lambda_1-\lambda_2}t^{r+1})}{(1-q^{\lambda_1-\lambda_2}t^r)}   \psi_{\tilde \Gamma^s/\tilde \Gamma}
\frac{u_{\tilde \Gamma,N}(t)}{u_{\Lambda,N}(t)}  \frac{u_{\Omega,N}(t)}{u_{\tilde \Gamma^s,N}(t)}=\psi_{\Omega/\Lambda} \label{eqtoprove}
\end{align}  
Since $s \leq \lambda_2$, the only case where $\tilde \Omega$, $\hat \Gamma^s$
or $\tilde \Gamma^s$ may not exist (for the lack of an extra $s$ in $\lambda$) is the case $\Lambda=(\pmb a; \lambda_1,\lambda_2)$ in  $N=m+2$ variables. Let us consider it first.    Using  \eqref{equ} with $n=1$, as well as $  \psi_{\tilde \Omega/\tilde \Lambda}=\psi_{\tilde \Gamma^s/\tilde \Gamma}= \psi_{\hat \Gamma^s/\hat \Gamma}=0$ in \eqref{eqtoprove}, we have to prove in this case that
 $$
\frac{(t-q^{\lambda_1-\lambda_2}t^r)}{t(1-q^{\lambda_1-\lambda_2}t^r)}  \psi_{\hat \Omega/\hat \Lambda} = \psi_{\Omega / \Lambda}
  $$
But this is immediate given that $ \psi_{\hat \Omega/\hat \Lambda}=1$
and
$$
 \psi_{\Omega / \Lambda}= \frac{1-q^{\lambda_1-\lambda_2}t^{r-1}}{1-q^{\lambda_1-\lambda_2}t^r}
$$

 As previously mentioned, the remaining cases will involve all terms: $\hat \Omega$, $\tilde \Omega$, $\hat \Gamma^s$
and $\tilde \Gamma^s$.   Using  \eqref{equ} together with
 $$
  \frac{u_{\hat \Gamma,N}(t)}{u_{\hat \Gamma^s,N}(t)}= \frac{t^{N-m-2}[n]_{t^{-1}}!}{[n-1]_{t^{-1}}!} \quad {\rm and } \quad  \frac{u_{\tilde \Gamma,N}(t)}{u_{\tilde \Gamma^s,N}(t)}= \frac{t^{N-m-3}[n]_{1/t}!}{[n-1]_{\frac{1}{t}}!}
  $$
in \eqref{eqtoprove},  
we have to prove this time that
  $$
\frac{(t-q^{\lambda_1-\lambda_2}t^r)}{t(1-q^{\lambda_1-\lambda_2}t^r)}  \psi_{\hat \Omega/\hat \Lambda} -\frac{(t-1)}{t^2(1-q^{\lambda_1-\lambda_2}t^r)}  \psi_{\tilde \Omega/\tilde \Lambda} + \frac{(1-q^{\lambda_1-\lambda_2}t^{r+1})}{t(1-q^{\lambda_1-\lambda_2}t^r)} \psi_{\hat \Gamma^s/\hat \Gamma} -\frac{(1-q^{\lambda_1-\lambda_2}t^{r+1})}{t^2(1-q^{\lambda_1-\lambda_2}t^r)}   \psi_{\tilde \Gamma^s/\tilde \Gamma} = \psi_{\Omega / \Lambda}
  $$
Comparing  $\psi_{\hat \Omega/\hat \Lambda}$, $\psi_{\tilde \Omega/\tilde \Lambda}$,
$\psi_{\hat \Gamma^s/\hat \Gamma}$, $\psi_{\tilde \Gamma^s/\tilde \Gamma}$,
and  $\psi_{\Omega/\Lambda}$, we get that their factors are identical except in the
 rows corresponding to $\lambda_1$ and $\lambda_2$ in $\Omega/\Lambda$.
 The squares in those rows contribute
 $$
 \frac{1-q^{\lambda_2+1}t^{l-1}}{1-q^{\lambda_2+1}t^{l}}, \qquad 1, \qquad  \frac{1-q^{\lambda_1+1}t^{r+l-1}}{1-q^{\lambda_1+1}t^{r+l}}, \qquad 1 \qquad {\rm and} \qquad \frac{(1-q^{\lambda_1+1}t^{r+l-1})}{(1-q^{\lambda_1+1}t^{r+l})} \frac{(1-q^{\lambda_2+1}t^{l-1})}{(1-q^{\lambda_2+1}t^{l})}
 $$
 respectively to $\psi_{\hat \Omega/\hat \Lambda}$, $\psi_{\tilde \Omega/\tilde \Lambda}$,
$\psi_{\hat \Gamma^s/\hat \Gamma}$, $\psi_{\tilde \Gamma^s/\tilde \Gamma}$,
and  $\psi_{\Omega/\Lambda}$,
where $l$ is the leg-length of the square in the row of $\lambda_2$ in the diagram of $\Omega$, and where we recall that $r$ is the difference between the row of $\lambda_2$ and that of $\lambda_1$
in the diagram associated to $\Lambda$ (or, equivalently, in the diagram associated to $\eta_{\Lambda,N}$).  The result then follows from the relation
\begin{align*}
&  \frac{(t-q^{\lambda_1-\lambda_2}t^r)}{t(1-q^{\lambda_1-\lambda_2}t^r)} \frac{(1-q^{\lambda_2+1}t^{l-1})}{(1-q^{\lambda_2+1}t^{l})}   -\frac{(t-1)}{t^2(1-q^{\lambda_1-\lambda_2}t^r)} \\
  & \qquad +\frac{(1-q^{\lambda_1-\lambda_2}t^{r+1})}{t(1-q^{\lambda_1-\lambda_2}t^r)}  \frac{(1-q^{\lambda_1+1}t^{r+l-1})}{(1-q^{\lambda_1+1}t^{r+l})}-\frac{(1-q^{\lambda_1-\lambda_2}t^{r+1})}{t^2(1-q^{\lambda_1-\lambda_2}t^r)}  = \frac{(1-q^{\lambda_1+1}t^{r+l-1})}{(1-q^{\lambda_1+1}t^{r+l})} \frac{(1-q^{\lambda_2+1}t^{l-1})}{(1-q^{\lambda_2+1}t^{l})}
\end{align*}
which can straightforwardly be checked using
$$
-\frac{(t-1)}{t^2(1-q^{\lambda_1-\lambda_2}t^r)} -\frac{(1-q^{\lambda_1-\lambda_2}t^{r+1})}{t^2(1-q^{\lambda_1-\lambda_2}t^r)}=-\frac{1}{t}
$$
and then
$$
\frac{(1-q^{\lambda_1-\lambda_2}t^{r+1})}{t(1-q^{\lambda_1-\lambda_2}t^r)}  \frac{(1-q^{\lambda_1+1}t^{r+l-1})}{(1-q^{\lambda_1+1}t^{r+l})}  -\frac{1}{t}= q^{\lambda_1-\lambda_2}t^{r-1}\frac{(1-t)(1-q^{\lambda_2+1}t^{l-1})}{(1-q^{\lambda_1-\lambda_2}t^r)(1-q^{\lambda_1+1}t^{r+l})}
$$
followed by
\begin{align*}
 \frac{(t-q^{\lambda_1-\lambda_2}t^r)}{t(1-q^{\lambda_1-\lambda_2}t^r)} \frac{(1-q^{\lambda_2+1}t^{l-1})}{(1-q^{\lambda_2+1}t^{l})} 
  +q^{\lambda_1-\lambda_2}t^{r-1}& \frac{(1-t)(1-q^{\lambda_2+1}t^{l-1})}{(1-q^{\lambda_1-\lambda_2}t^r)(1-q^{\lambda_1+1}t^{r+l})} \\
&  \qquad \qquad =\frac{(1-q^{\lambda_1+1}t^{r+l-1})}{(1-q^{\lambda_1+1}t^{r+l})} \frac{(1-q^{\lambda_2+1}t^{l-1})}{(1-q^{\lambda_2+1}t^{l})}
\end{align*}

\end{proof}

  \section{Proof of Proposition~\ref{propsymfinite}.} \label{appB}

      Let $H_m(t)$ be the Hecke algebra generated by $T_1,\dots,T_{m-1}$.  We define the linear antihomomorphism $\varphi_m: H_m(t) \to H_m(t)$ to be such that
\begin{equation*}
    \varphi_m(T_{i}) = T_{m-i} \qquad  {\rm for} \quad i = 1,\ldots, m-1
\end{equation*}
For any permutation $\sigma \in S_m$, we thus
have that
$\varphi_m (T_\sigma)=T_{\tilde \sigma}$ for a certain permutation $\tilde \sigma  \in S_m$ of the same length as $\sigma$. Hence
\begin{equation} \label{eqTm}
\varphi_m(T_{\omega_m}) = T_{\omega_m}
\end{equation}
given that $\omega_m$ is the unique longest permutation in $S_m$.
Moreover, it is easy to see that  $\varphi_m \circ \varphi_m$ is the identity and that
\begin{equation}
\varphi_m(\bar T_i) = \bar T_{m-i}
\end{equation}

\begin{lemma} \label{rel phi} Let  $\omega_{m-1}'=[1,m,m-1,\dots,2]$ be the longest permutation in the symmetric group on $\{2,\dots,m\}$.  Then,
 for all $i \in \{1,\ldots,m\}$, we have
 \begin{equation} \label{eqT}
         T_{\omega_m} = T_{i-1} \cdots T_1 \cdot 
         T_{\omega_{m-1}'} \cdot \varphi_m(T_{i} \cdots T_{m-1})
 \end{equation}
 and
\begin{equation} \label{eqTm2}
         \varphi_m(\bar{T}_{\omega_m}) = \varphi_m(\bar{T}_{1} \cdots \bar{T}_{i-1}) \cdot 
         \varphi_m(\bar{T}_{\omega_{m-1}'}) \cdot \bar{T}_{m-1} \cdots \bar{T}_{i}
    \end{equation}
\end{lemma}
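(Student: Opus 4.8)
The plan is to prove \eqref{eqT} as a reduced-word factorization of the longest element $\omega_m$ in $S_m$, and then to deduce \eqref{eqTm2} formally by inverting \eqref{eqT} and applying the antihomomorphism $\varphi_m$. First I would record that, since $\varphi_m$ is an antihomomorphism with $\varphi_m(T_j)=T_{m-j}$, one has $\varphi_m(T_i\cdots T_{m-1})=\varphi_m(T_{m-1})\cdots\varphi_m(T_i)=T_1\cdots T_{m-i}$. Thus \eqref{eqT} is equivalent to the group-theoretic identity
$$
\omega_m=(s_{i-1}\cdots s_1)\,\omega_{m-1}'\,(s_1\cdots s_{m-i}).
$$
I would establish this by evaluating the right-hand side on an arbitrary $j\in\{1,\dots,m\}$, splitting into the three cases $1\le j\le m-i$, $j=m-i+1$, and $m-i+2\le j\le m$. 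Recalling that $s_1\cdots s_{m-i}$ is the cycle sending $k\mapsto k+1$ for $k\le m-i$ and $m-i+1\mapsto 1$, that $\omega_{m-1}'$ fixes $1$ and sends $k\mapsto m+2-k$ for $k\ge2$, and that $s_{i-1}\cdots s_1$ sends $1\mapsto i$ and $k\mapsto k-1$ for $2\le k\le i$, a short computation shows that in each case the composite sends $j\mapsto m+1-j=\omega_m(j)$.

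Next I would confirm that this factorization is reduced by a length count. The outer words $s_{i-1}\cdots s_1$ and $s_1\cdots s_{m-i}$ are reduced, of lengths $i-1$ and $m-i$, while $\omega_{m-1}'$ has length $\binom{m-1}{2}$, and
$$
(i-1)+\binom{m-1}{2}+(m-i)=\binom{m}{2}=\ell(\omega_m).
$$
Since the product of the three permutations is $\omega_m$ and their lengths add to $\ell(\omega_m)$, the concatenated word is a reduced word for $\omega_m$; by the braid relations $T_w$ is independent of the chosen reduced word, so $T_{\omega_m}=(T_{i-1}\cdots T_1)\,T_{\omega_{m-1}'}\,(T_1\cdots T_{m-i})$, which is \eqref{eqT} after rewriting $T_1\cdots T_{m-i}=\varphi_m(T_i\cdots T_{m-1})$.

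Finally, for \eqref{eqTm2} I would invert \eqref{eqT}. Using $(ABC)^{-1}=C^{-1}B^{-1}A^{-1}$ and $T_j^{-1}=\bar T_j$ gives
$$
\bar T_{\omega_m}=(\bar T_{m-i}\cdots\bar T_1)\,\bar T_{\omega_{m-1}'}\,(\bar T_1\cdots\bar T_{i-1}),
$$
and then applying $\varphi_m$, which reverses the three blocks, yields $\varphi_m(\bar T_{\omega_m})=\varphi_m(\bar T_1\cdots\bar T_{i-1})\,\varphi_m(\bar T_{\omega_{m-1}'})\,\varphi_m(\bar T_{m-i}\cdots\bar T_1)$. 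Since $\varphi_m(\bar T_j)=\bar T_{m-j}$, the last block simplifies to $\varphi_m(\bar T_{m-i}\cdots\bar T_1)=\bar T_{m-1}\cdots\bar T_i$, which is exactly \eqref{eqTm2}. The only genuine work is the three-case bookkeeping for the permutation identity; beyond that, both displayed equations are formal consequences of the antihomomorphism property of $\varphi_m$ and the defining relations of the Hecke algebra, so I do not anticipate any conceptual obstacle.
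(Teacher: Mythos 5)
Your proposal is correct and takes essentially the same approach as the paper: both reduce \eqref{eqT}, via the length count $(i-1)+\binom{m-1}{2}+(m-i)=\binom{m}{2}$, to the symmetric-group identity $\omega_m=s_{i-1}\cdots s_1\,\omega_{m-1}'\,s_1\cdots s_{m-i}$, and both obtain \eqref{eqTm2} formally by inverting \eqref{eqT} and applying the antihomomorphism $\varphi_m$. The only (cosmetic) difference is how that permutation identity is checked: you verify it pointwise on each $j\in\{1,\dots,m\}$, whereas the paper applies the relation $s_\ell\,\omega_m\,s_{m-\ell}=\omega_m$ repeatedly to the well-known $i=1$ case $\omega_m=\omega_{m-1}'\,s_1\cdots s_{m-1}$.
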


\begin{proof} It suffices to prove \eqref{eqT} since
 \eqref{eqTm} can be obtained from \eqref{eqT} by 
 taking the inverse and then applying $\varphi_m$.  Since $\omega_m$ is of length $m(m-1)/2$,  \eqref{eqT} will hold if we can prove that
  $$
\omega_m=s_{i-1} \cdots s_1 \omega_{m-1}' s_1 \cdots s_{m-i}
$$
for $i=1,\dots,m$. The result is well known when $i=1$.
For an arbitrary $i$, we use the simple relation
$$
s_\ell \, \omega_m \, s_{m-\ell} = \omega_m
$$
successively (for $\ell=1,\dots,i-1$) on the $i=1$ case
$$
\omega_m=  \omega_{m-1}' s_1 \cdots s_{m-1}
$$
\end{proof}

For simplicity, we now define
\begin{equation} 
  \bar K_m(x,y)=  {{t^{{m\choose 2} }}} \bar T^{(x)}_{{\mathbf \omega}_m}  K_m(x,y)= K_0(x,y)  \left[ \frac{\prod_{i+j \leq m} (1-tq^{-1} x_i y_j)}{\prod_{i+j \leq m+1}(1-q^{-1} x_i y_j)} \right]
  \end{equation}

\begin{lemma} \label{lema Ti}
For $i=1,\dots,m-1$, we have
\begin{equation}
  T^{(x)}_i  \bar K_m(x_{(N)},y_{(N)}) = T^{(y)}_{m-i}  \bar K_m(x_{(N)},y_{(N)})
\end{equation}
Hence
\begin{equation} \label{eqvarphi}
  T^{(x)}_\sigma  \bar K_m(x_{(N)},y_{(N)}) = \varphi^{(y)}_m \left( T^{(y)}_{\sigma} \right)  \bar K_m(x_{(N)},y_{(N)})
\end{equation}
for any $\sigma \in S_m$.
\end{lemma}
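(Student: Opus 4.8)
The plan is to work with the explicit rational expression for $\bar K_m$ furnished by its definition, namely $\bar K_m(x,y)=K_0(x,y)\,R(x,y)$ with $R(x,y)=\frac{\prod_{a+b\le m}(1-tq^{-1}x_ay_b)}{\prod_{a+b\le m+1}(1-q^{-1}x_ay_b)}$, and to prove the sharper identity $T_i^{(x)}R=T_{m-i}^{(y)}R$. First I would observe that $K_0(x,y)$ is symmetric separately in the $x$-alphabet and in the $y$-alphabet, so by \eqref{eqTi} it is an eigenfunction of every $T_j$ with eigenvalue $t$; more to the point, since $K_0$ is symmetric in $x_i,x_{i+1}$ one gets $T_i^{(x)}(K_0R)=K_0\,T_i^{(x)}R$, and likewise $T_{m-i}^{(y)}(K_0R)=K_0\,T_{m-i}^{(y)}R$. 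Hence the factor $K_0$ drops out and it suffices to compare the two divided-difference operators on $R$ alone.

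For this I would isolate in $R$ the factors that actually involve the swapped variables. Writing $R=R_0\,\rho$, where $R_0$ collects all factors free of $x_i,x_{i+1}$ (hence invariant under $K_{i,i+1}^{(x)}$ and factoring out of $T_i^{(x)}$), a short count of the staircase conditions $a+b\le m$ and $a+b\le m+1$ shows that almost every factor of $\rho$ cancels against its image under the swap, so the ratio telescopes to the single boundary expression $K_{i,i+1}^{(x)}\rho/\rho=\frac{(1-tq^{-1}x_{i+1}y_{m-i})(1-q^{-1}x_iy_{m-i+1})}{(1-tq^{-1}x_iy_{m-i})(1-q^{-1}x_{i+1}y_{m-i+1})}$. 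Performing the mirror computation on the $y$-side for the swap $K_{m-i,m-i+1}^{(y)}$ yields the analogous ratio, with numerator $(1-tq^{-1}x_iy_{m-i+1})(1-q^{-1}x_{i+1}y_{m-i})$ over the same denominator. Substituting these into $T_i=t+\frac{tx_i-x_{i+1}}{x_i-x_{i+1}}(K_{i,i+1}-1)$, the crucial simplification is that $x_iy_{m-i}\cdot x_{i+1}y_{m-i+1}$ equals $x_{i+1}y_{m-i}\cdot x_iy_{m-i+1}$; this makes the quadratic cross terms cancel in each difference $K\rho/\rho-1$, leaving a factor $(x_i-x_{i+1})$ on the $x$-side and $(y_{m-i}-y_{m-i+1})$ on the $y$-side that is exactly absorbed by the prefactor of $T$. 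I expect both $T_i^{(x)}R$ and $T_{m-i}^{(y)}R$ to collapse to the common value
$$
R\left[\,t+\frac{q^{-1}(tx_i-x_{i+1})(ty_{m-i}-y_{m-i+1})}{(1-tq^{-1}x_iy_{m-i})(1-q^{-1}x_{i+1}y_{m-i+1})}\right],
$$
which is manifestly symmetric under the simultaneous exchange of the two pairs. Carrying out this collapse, and keeping straight the bookkeeping of which factors survive the telescoping, is the main obstacle; everything else is formal.

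Finally, the relation \eqref{eqvarphi} follows with no further computation. Since $T_j^{(x)}$ and $T_k^{(y)}$ act on disjoint alphabets they commute, so given a reduced word $T_\sigma=T_{i_1}\cdots T_{i_\ell}$ I would convert the innermost $x$-operator using $T_{i_\ell}^{(x)}\bar K_m=T_{m-i_\ell}^{(y)}\bar K_m$, commute the resulting $y$-operator to the far left, and repeat. Peeling the generators off one at a time from the inside out gives $T_\sigma^{(x)}\bar K_m=T_{m-i_\ell}^{(y)}\cdots T_{m-i_1}^{(y)}\bar K_m$; since $\varphi_m$ is an antihomomorphism with $\varphi_m(T_j)=T_{m-j}$, the right-hand side is exactly $\varphi_m^{(y)}(T_\sigma)\,\bar K_m$, as claimed.
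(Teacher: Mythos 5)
Your proof is correct and takes essentially the same approach as the paper: both reduce the identity to the few factors of $\bar K_m$ that fail to be symmetric in $x_i,x_{i+1}$ and in $y_{m-i},y_{m-i+1}$ (symmetric factors passing through $T_i^{(x)}-t$ and $T_{m-i}^{(y)}-t$), and then verify the same rational-function cancellation of the quartic cross terms, your common value $R\bigl[t+\frac{q^{-1}(tx_i-x_{i+1})(ty_{m-i}-y_{m-i+1})}{(1-tq^{-1}x_iy_{m-i})(1-q^{-1}x_{i+1}y_{m-i+1})}\bigr]$ agreeing exactly with the paper's computation that $(T_i^{(x)}-t)B=(T_{m-i}^{(y)}-t)B$ for its isolated non-symmetric term $B$. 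Your inside-out peeling argument for \eqref{eqvarphi}, using the commutativity of the $x$- and $y$-operators together with the antihomomorphism property of $\varphi_m$, is the standard deduction that the paper states without spelling out.
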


\begin{proof}  We have that $ \bar K_m(x_{(N)},y_{(N)})$ is symmetric in $x_i$ and $x_{i+1}$ and in $y_{m-i}$ and $y_{m-i+1}$ except for the term
\begin{equation*}
    B := \dfrac{(1-t q^{-1}x_iy_{m-i})}{(1- q^{-1} x_iy_{m-i+1})(1- q^{-1} x_{i+1}y_{m-i})(1- q^{-1} x_{i}y_{m-i})}
\end{equation*}
We thus only have to prove that $T^{(x)}_i B  = T^{(y)}_{m-i} B $, or equivalently, that  $(T^{(x)}_i-t) B  = (T^{(y)}_{m-i} -t)B $.  The lemma thus holds after
checking that
$$
(T^{(x)}_i-t) B  = (T^{(y)}_{m-i} -t)B= \dfrac{q^{-1}(tx_i-x_{i+1})(ty_{m-i}-y_{m-i+1})}{(1-q^{-1}x_{i+1}y_{m-i})(1-q^{-1}x_{i}y_{m-i})(1-q^{-1}x_iy_{m-i+1})(1-q^{-1}x_{i+1}y_{m-i+1})} 
$$
\end{proof}

\begin{proof}[Proof of Proposition~\ref{propsymfinite}] Recall that our claim  is that
\begin{equation} \label{eqY}
  Y_i^{(x)} K_m(x_{(N)},y_{(N)}) =  Y_i^{(y)} K_m(x_{(N)},y_{(N)}) 
\end{equation}
  for $i=1,\dots,m$,
  and that
\begin{equation} \label{eqYY}
D^{(x)} K_m(x_{(N)},y_{(N)}) =  D^{(y)} K_m(x_{(N)},y_{(N)})
\end{equation}
For the remainder of the section, we will consider that $x=(x_1,\dots,x_N)$ and $y=(y_1,\dots,y_N)$.
We first prove \eqref{eqY}. For simplicity, we write $Y_i = a \cdot b \cdot \omega \cdot c$, where $a = T_{i} \ldots T_{m-1}$, $b = T_{m}\ldots T_{N-1}$ and $c = \bar{T}_{1} \ldots \bar{T}_{i-1}$ (note that if $i =1$ then $c = 1$). We will also let $d = T_{\omega_m}$ and use $a$ for $a^{(x)}$ and $\pmb a$ for $a^{(y)}$ (and similarly for $b,c,d$ and $\omega$).
Using this notation, \eqref{eqY} translates into
\begin{equation*}
  a \, b \, \omega \, c \, d \, \bar K_m(x,y) = \pmb a \, \pmb b\,  \pmb \omega \,  \pmb c \,  d \, \bar K_m(x,y)
\end{equation*}
But, since $d \,  \bar K_m(x,y)= \varphi^{(y)} ( {\pmb d}) \bar K_m(x,y)=  {\pmb d} \,   \bar K_m(x,y)$ from \eqref{eqTm} and  \eqref{eqvarphi}, this amounts to
\begin{equation*}
  a \, b \, \omega \, c \, d \, \bar K_m(x,y) = \pmb a \, \pmb b\,  \pmb \omega \,  \pmb c \, \pmb d \, \bar K_m(x,y)
\end{equation*}
which we can rewrite as
\begin{equation*}
   \bar  K_m(x,y) = \pmb a \,  \pmb b \,  \pmb \omega \, \pmb c \, \pmb  d \, \bar{d} \,  \bar{c} \,  \bar{\omega} \,  \bar{b} \,  \bar{a} \,  \bar  K_m(x,y)
\end{equation*}
where $\bar a$ stands for the inverse of $a$ (and similarly for the other terms).  From Lemma~\ref{lema Ti}, this becomes
\begin{equation*}
   \bar  K_m(x,y) = \pmb a \,  \pmb b \, \pmb  \omega \, \pmb  c \, \pmb  d   \varphi_m^{(y)}(\bar {\pmb {a}}) \, \bar{d} \,  \bar{c} \,  \bar{\omega} \,  \bar{b} \,  \bar  K_m(x,y)
\end{equation*}
It was proven in \cite{BDLM2} that  $\bar{\omega} \, 
\bar{b} \,  \bar  K_m(x,y)
= \bar{\pmb \omega} \, \bar{\pmb b}\,   \bar  K_m(x,y)$ (see the symmetry of $G(x,y)$ in (140) therein).  Therefore, we have to prove, using Lemma~\ref{lema Ti}, that
\begin{align*}
  \bar  K_m(x,y) &
  = \pmb a \, \pmb  b \, \pmb \omega \,  \pmb c \, \pmb  d \,    \varphi_m^{(y)}(\bar{\pmb a})  \, \bar{\pmb \omega} \, \bar{\pmb b} \, \bar{d} \,  \bar{c} \,  \bar  K_m(x,y)    \\
  &
  = \pmb a \, \pmb b \, \pmb  \omega \, \pmb c \, \pmb  d \,    \varphi_m^{(y)}(\bar{\pmb a}) \,  \bar{\pmb \omega} \, \bar{\pmb b} \,   \varphi_m^{(y)}(\bar{\pmb c}) \, \bar{d} \,   \bar  K_m(x,y)    \\ 
    &
  = \pmb a \, \pmb b \,  \pmb \omega \, \pmb c \, \pmb  d \,    \varphi_m^{(y)}(\bar{\pmb a}) \,  \bar{\pmb \omega} \, \bar{\pmb b} \,   \varphi_m^{(y)}(\bar{\pmb c}) \, \bar{\pmb d} \,    \bar  K_m(x,y)
\end{align*}
since, as we have seen before,  $\bar d \,  \bar K_m(x,y)= \varphi^{(y)} (\bar {\pmb d}) \bar K_m(x,y)= \bar {\pmb d} \,   \bar K_m(x,y)$ from \eqref{eqTm} and  \eqref{eqvarphi}.  The equality will thus follow if we can prove that
\begin{equation*}
    a \, b \, \omega \, c \, d \, \varphi_m(\bar a) \,  \bar{\omega}\,  \bar{b} \, \varphi_m(\bar {c}) \, \bar {d} = 1
\end{equation*}
where 1 stands for the identity operator.  Now, we use Lemma~\ref{rel phi} to get $d=\bar c \, e \,  \varphi_m(a)$ and $\bar d=\varphi_m(\bar d)= \varphi_m(c) \varphi_m(\bar e) \bar a$, where $e = T_{\omega_{m-1}'}$. This yields
\begin{equation} \label{eqq}
\begin{array}{ll}
 a \, b \, \omega \, c \, d \, \varphi_m(\bar a) \,  \bar{\omega} \, \bar{b} \, \varphi_m(\bar {c})\,  \bar {d}
  &= a \,  b \, \omega \, c  \, \bigl(\bar {c} \, e \, \varphi_m(a)\bigr) \,  \varphi_m(\bar{a}) \, \bar{\omega} \, \bar{b} \, \varphi_m(\bar {c}) \, \bigl(\varphi_m(c) \,  \varphi_m(\bar {e}) \, \bar {a}\bigr) \\
   &= a \, b \, \omega \,    e \,  \bar{\omega} \, \bar{b} \, \varphi_m(\bar {e}) \, \bar {a}
\end{array}
\end{equation}
Finally, we have that $\varphi_m(\bar {e}) = \varphi_m(\bar {T}_{\omega_{m-1}'}) = \bar {T}_{\omega_{m-1}} = \bar {e}'$ since $\varphi_m$ changes the set $\{T_2,\dots T_{m-1}\}$ to $\{T_1,\ldots,T_{m-2}\}$. Using $\bar {b} \,  \bar{e}' = \bar{e}'\, \bar{b}$
and $\bar{\omega} \, \bar{e}' = \bar{e} \, \bar{\omega}$, we obtain that
$$
 a \, b \, \omega   \, e \,  \bar{\omega}\,  \bar{b}\, \varphi_m(\bar {e}) \, \bar {a}=  a \, b \, \omega  \,  e\,  \bar{\omega} \,  \bar e' \, \bar{b}   \, \bar {a} =  a \, b \, \omega  \,  e \, \bar e \, \bar{\omega}  \,  \bar{b}   \, \bar {a} =1
$$
as wanted.

For \eqref{eqYY}, we have to prove that
\begin{equation*} 
    D^{(x)} T^{(x)}_{\omega_m}  \bar  K_m(x,y) = D^{(y)} T^{(y)}_{\omega_m}  \bar  K_m(x,y)   
    \end{equation*}
Since $ T_{\omega_m}^{(x)} $ commutes with $D^{(x)}$ and  $T^{(y)}_{\omega_m}  \bar  K_m(x,y)= T^{(x)}_{\omega_m}  \bar  K_m(x,y)$     from \eqref{eqTm} and  \eqref{eqvarphi}, we only need to show that
\begin{equation*} \label{simetria m}
    D^{(x)}  \bar  K_m(x,y) = D^{(y)}   \bar  K_m(x,y)   
    \end{equation*}
Now, $D^{(x)}$ commutes with $\mathcal S^{t (x)}_{m+1,N}$ and
$$
\mathcal S^{t (x)}_{m+1,N}  \bar  K_m(x,y)  \propto \bar  K_m(x,y)  
$$
since $\bar K_m(x_{(N)},y_{(N)})$ is symmetric in $x_{m+1},\dots,x_N$ (and similarly when $x$ is replaced by $y$).  It is thus equivalent to show that
$$
\mathcal S^{t (x)}_{m+1,N}  D^{(x)} \bar  K_m(x,y) = \mathcal S^{t (y)}_{m+1,N}  D^{(y)} \bar  K_m(x,y) 
$$
From the definition of $D$, the result will thus follow if we can show that
$$
\mathcal S^{t (x)}_{m+1,N}  Y_i^{(x)} \bar  K_m(x,y) = \mathcal S^{t (y)}_{m+1,N}  Y_i^{(y)} \bar  K_m(x,y) 
$$
for $i=m+1,\dots,N$.  Using $\mathcal S^{t (x)}_{m+1,N} T_j^{(x)}=t \mathcal S^{t (x)}_{m+1,N}  $ and $\bar T_j^{(x)} \bar  K_m(x,y) =t^{-1} \bar  K_m(x,y) $  for $j=m+1,\dots,N$, we obtain in those cases that  (up to a power of $t$)
$$
\mathcal S^{t (x)}_{m+1,N}  Y_i^{(x)} \bar  K_m(x,y) \propto \mathcal S^{t (x)}_{m+1,N}  \omega^{(x)} \bar T_1^{(x)} \cdots \bar T_m^{(x)} \bar  K_m(x,y)
$$
Using the same relation with $x$  replaced by $y$, we
 thus have left to prove that
$$
\mathcal S^{t (x)}_{m+1,N}  \omega^{(x)} \bar T_1^{(x)} \cdots \bar T_m^{(x)} \bar  K_m(x,y)=\mathcal S^{t (y)}_{m+1,N}  \omega^{(y)} \bar T_1^{(y)} \cdots \bar T_m^{(y)} \bar  K_m(x,y)
$$
But this was proven in \cite{BDLM2} (see the symmetry of $L(x,y) $ in (154) therein).
\end{proof}

\section{Missing piece in the proof of Proposition~\ref{propoinv}} \label{appC}
We will prove the following lemma which was needed in the proof of Proposition~\ref{propoinv}.
\begin{lemma} For any $f\in R_m$ in $N$ variables, we have
   \begin{equation} \label{eqinv11}
Y_i^\star \left( \tau_1 \cdots\tau_m K_{\omega_m} \bar T_{\omega_m} \right) f= \left(\tau_1 \cdots \tau_m  K_{\omega_m} \bar T_{\omega_m} \right) \bar Y_i f \qquad i=1,\dots,m 
\end{equation}
and
\begin{equation} \label{eqinv22}
D^\star  \left( \tau_1 \cdots \tau_m K_{\omega_m} \bar T_{\omega_m} \right) f=  \left( \tau_1 \cdots \tau_m K_{\omega_m} \bar T_{\omega_m} \right) \bar D f 
\end{equation}
where $\bar D=\bar Y_{m+1}+\cdots +\bar Y_N + \sum_{i=m+1}^N t^{i-1}$.  
\end{lemma}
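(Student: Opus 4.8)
The plan is to prove both operator identities directly at the level of the Hecke-algebra generators, mirroring the reduction-to-identity strategy of Appendix~\ref{appB}. Write $U=\tau_1\cdots\tau_m K_{\omega_m}\bar T_{\omega_m}$ and note first that, since $T_1,\dots,T_{m-1}$, $K_{\omega_m}$ and $\tau_1,\dots,\tau_m$ all act only on $x_1,\dots,x_m$ and preserve symmetry in $x_{m+1},x_{m+2},\dots$, the operator $U$ maps $R_m$ to $R_m$; the same holds for $\bar Y_i$ with $i\le m$ and for $\bar D$. The computation rests on a short list of elementary relations among the generators: (a) the star--bar conjugation $K_{i,i+1}\bar T_i K_{i,i+1}=T_i^\star$, which follows at once from \eqref{eqTi} together with $\bar T_i=t^{-1}-1+t^{-1}T_i$; (b) the index reversal produced by the longest element, $T_{\omega_m}T_i\bar T_{\omega_m}=T_{m-i}$, with its permutation analogue $K_{\omega_m}K_{i,i+1}K_{\omega_m}=K_{m-i,m-i+1}$; (c) the commutation $\tau_1\cdots\tau_m\,T_j=T_j\,\tau_1\cdots\tau_m$ for every $j\le m-1$ and every $j\ge m+1$, since the rational coefficient of $T_j$ is invariant under the simultaneous scaling $x_j,x_{j+1}\mapsto qx_j,qx_{j+1}$ and $\tau_j\tau_{j+1}$ commutes with $K_{j,j+1}$; and (d) the relation $\omega T_i=T_{i-1}\omega$ recorded in Section~\ref{sec2}. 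I would also use repeatedly that on $R_m$ one has $T_jf=tf$ and $\bar T_jf=t^{-1}f$ (and $T_j^\star f=t^{-1}f$, $\bar T_j^\star f=tf$) for $j=m+1,\dots,N-1$, so that the ``tail'' blocks appearing in $Y_i$ and $Y_i^\star$ collapse to scalar powers of $t$.

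For \eqref{eqinv11} I would recast the target as the conjugation statement $U^{-1}Y_i^\star U=\bar Y_i$ on $R_m$, expand $Y_i^\star=t^{N-i}(T_i^\star\cdots T_{N-1}^\star)\,\omega^\star(\bar T_1^\star\cdots\bar T_{i-1}^\star)$ with $\omega^\star=K_{N-1,N}\cdots K_{1,2}\tau_1^{-1}$, and $\bar Y_i=t^{N-i}(T_{i-1}\cdots T_1)\,\omega^{-1}(\bar T_{N-1}\cdots\bar T_i)$, and then push the factors of $U$ through each block. Relation (c) moves $\tau_1\cdots\tau_m$ past every $T_j$ with $j\ne m$, relation (a) converts the starred generators into barred ones after commuting them past $K_{\omega_m}$, and relations (b),(d) reverse the surviving indices so the two sides align. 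Exactly as in Appendix~\ref{appB}, the identity then reduces to showing that a single explicit word in the $T$'s, $K$'s, $\omega$ and $\tau$'s acts as the identity on $R_m$, which I expect to close using Lemma~\ref{rel phi} and the antihomomorphism $\varphi_m$ introduced there.

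For \eqref{eqinv22} the individual operators $Y_j^\star$ and $\bar Y_j$ with $j>m$ do not preserve $R_m$, so the identity must be handled for the full symmetric sum. Here I would exploit that both $f$ and $Uf$ lie in $R_m$: for $g\in R_m$ and $j>m$, the scalar action $\bar T_kg=t^{-1}g$ ($k\ge m+1$) collapses the tail block, giving $\bar Y_jg=T_{j-1}\cdots T_1\,\omega^{-1}g$, and a parallel, though only \emph{partial}, reduction applies to $Y_j^\star(Uf)$ after commuting the starred tail past $\omega^\star$ (only the factors of index $>m$ become scalars). Summing over $j=m+1,\dots,N$ and applying relations (a)--(d) then matches $\sum_jY_j^\star\,U$ with $U\sum_j\bar Y_j$ up to an additive multiple of $U$; the constant $+\sum_{j=m+1}^Nt^{j-1}$ in $\bar D$ and the constant $-\sum_{j=m+1}^Nt^{j-1}$ in $D^\star$ are precisely the residue of this tail collapse, and their combination yields \eqref{eqinv22}.

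The hard part will be the bookkeeping in the second identity. Because the summands $Y_j^\star$ ($j>m$) are not individually intertwined by $U$, one cannot argue term by term, and the collapse of $T_m$ — the only Hecke generator straddling the scaled block $x_1,\dots,x_m$ and the symmetric block $x_{m+1},\dots,x_N$, and hence the one factor that fails relation (c) — must be tracked with care. Controlling this interface and verifying that the scalar remainders assemble into exactly the constant discrepancy between $D^\star$ and $\bar D$ is where I expect the real work to lie; by contrast, the first identity is a finite word-identity that should follow mechanically from relations (a)--(d).
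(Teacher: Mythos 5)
Your toolkit is the right one and, for \eqref{eqinv11}, your outline does track the paper's strategy: reduce to an operator-word identity that holds on $R_m$ and close it with Lemma~\ref{rel phi} and $\varphi_m$; relations (a)--(d) as you state them are all correct. But one of your working premises fails, and it is the one you lean on most: the tail blocks inside $Y_i^\star$ and $\bar Y_i$ do \emph{not} collapse to powers of $t$, because by the time they act their argument has left $R_m$. Concretely, in $Y_i^\star Uf=t^{N-i}\,T_i^\star\cdots T_{N-1}^\star\,\omega^\star\,\bar T_1^\star\cdots\bar T_{i-1}^\star\,Uf$ (with $U=\tau_1\cdots\tau_m K_{\omega_m}\bar T_{\omega_m}$), the operator $\omega^\star$ contains the long cycle $K_{N-1,N}\cdots K_{1,2}$ and shifts the window of symmetry from $\{x_{m+1},\dots,x_N\}$ to $\{x_{m+2},\dots,x_N\}$, so the factor $T_{m+1}^\star$, and every factor of smaller index, is not scalar on its argument; the analogous interface appears inside $\bar Y_i$. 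This is precisely why the paper's proof is not ``mechanical'': it must insert the permutation $K_{\sigma_{m+1}}$, $\sigma_{m+1}=[1,\dots,m,N,N-1,\dots,m+1]$, which acts as the identity on $R_m$ but is indispensable for the two sides to agree as words, it reduces general $i$ to $i=1$ via $Y_i=\bar T_{i-1}\cdots\bar T_1Y_1\bar T_1\cdots\bar T_{i-1}$ together with $T_i^\star K_{\omega_m}\bar T_{\omega_m}=K_{\omega_m}\bar T_{\omega_m}T_i$, and only then carries out a full page of word manipulation. So for \eqref{eqinv11} you have a plausible plan, not a proof.

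The genuine gap is \eqref{eqinv22}, where you explicitly set aside the core difficulty (``where I expect the real work to lie''). The idea you are missing is the symmetrizer reduction used by the paper: $(\mathcal S^t_{m+1,N})^\star$ and $\mathcal S^t_{m+1,N}$ act as nonzero scalars on $R_m$, and for $g\in R_m$ one has the collapses $(\mathcal S^t_{m+1,N})^\star Y_j^\star g=t^{\,j-m-1}(\mathcal S^t_{m+1,N})^\star Y_{m+1}^\star g$ and $\mathcal S^t_{m+1,N}\bar Y_jg=t^{\,j-m-1}\mathcal S^t_{m+1,N}\bar Y_{m+1}g$, because there every $T_k$ of index $\geq m+1$ hits either the symmetrizer or an $R_m$-element directly. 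This turns the whole sum over $j$ into a single intertwining identity for $Y_{m+1}^\star$ and $\bar Y_{m+1}$, which is then proved by the same word technique as \eqref{eqinv11}; your substitute, term-by-term bookkeeping of the $T_m$ interface, is exactly what this device is designed to avoid, and you do not carry it out. Moreover, your picture of where the constants come from (``the residue of the tail collapse'') is not right, and this is not a harmless omission: testing the statement at $m=1$, $N=2$, $f=1$ (so $U=\tau_1$) gives $D^\star Uf=Y_2^\star 1-t=0$, while with the printed constant $+\sum_{i=m+1}^N t^{i-1}$ one gets $U\bar Df=2t$; the identity holds only with $\bar D=\bar Y_{m+1}+\cdots+\bar Y_N-\sum_{i=m+1}^Nt^{i-1}$, in which case the additive constants of $D^\star$ and $\bar D$ are equal and simply cancel from the two sides, leaving no residue at all. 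In other words, the bookkeeping you postponed is not routine verification; it is the proof, and the sketch you give of it points in the wrong direction.
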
 
\begin{proof}
We first prove \eqref{eqinv11} in the case $i=1$.  Observe that $\bar Y_1$ and $\tau_1 \cdots \tau_m K_{\omega_m} \bar T_{\omega_m}$ both preserve $R_m$. We can thus prove instead that
$$
Y_1^\star \left( \tau_1 \cdots\tau_m K_{\omega_m} \bar T_{\omega_m} \right) f= K_{\sigma_{m+1}}\left(\tau_1 \cdots \tau_m  K_{\omega_m} \bar T_{\omega_m} \right) \bar Y_1 f
$$
where $\sigma_{m+1}=[1,\dots,m,N,N-1,\dots,m+1]$. Hence, using the expression for $Y_1$, we need to prove that
\begin{equation*}
 T_1^\star \cdots T_{N-1}^\star s_{N-1} \cdots s_1  \tau_2 \cdots \tau_m K_{\omega_m} \bar T_{\omega_m} f=   K_{\sigma_{m+1}} \tau_1 \cdots \tau_m  K_{\omega_m} \bar T_{\omega_m} \tau_1^{-1} s_1 \dots s_{N-1} \bar T_{N-1} \cdots \bar T_1 f
\end{equation*}  
 where we use $s_i=K_{i,i+1}$ for simplicity. Using $ K_{\omega_m} \bar T_{\omega_m}=   T_{\omega_m}^\star K_{\omega_m}$ and multiplying both sides by
 $\bar T_{m-1}^\star  \cdots\bar T_1^\star$, this is equivalent to proving that
\begin{equation*}
 T_m^\star \cdots T_{N-1}^\star s_{N-1} \cdots s_1  \tau_2 \cdots \tau_m K_{\omega_m} \bar T_{\omega_{m}} f=  K_{\sigma_{m+1}}  \tau_1 \cdots \tau_m  T_{\omega_{m-1}}^\star
  K_{\omega_m}   \tau_1^{-1} s_1 \dots s_{N-1} \bar T_{N-1} \cdots \bar T_1 f
\end{equation*}  
where we used $T_{\omega_{m-1}}=\bar T_{m-1} \cdots \bar T_1 T_{\omega_{m}}$. Letting $f'=\bar T_{m-1} \cdots \bar T_1 f$,
we thus have to prove that
\begin{equation*}
 T_m^\star \cdots T_{N-1}^\star s_{N-1} \cdots s_1  \tau_2 \cdots \tau_m K_{\omega_m} \bar T_{\omega_{m-1}} f'=   K_{\sigma_{m+1}} \tau_1 \cdots \tau_m  T_{\omega_{m-1}}^\star
  K_{\omega_m}   \tau_1^{-1} s_1 \dots s_{N-1} \bar T_{N-1} \cdots \bar T_m f'
\end{equation*}  
for $f' \in R_m$. Owing to the relations $ K_{\omega_m}   \tau_1^{-1}= \tau_m^{-1} K_{\omega_m}$ and $s_{N-1} \cdots s_1  \tau_2 \cdots \tau_m=  \tau_1 \cdots \tau_{m-1} s_{N-1} \cdots s_1$, we can extract $\tau_1 \cdots \tau_{m-1}$ to the left on both sides of the equation. We thus have left to prove that 
\begin{equation*}
 T_m^\star \cdots T_{N-1}^\star s_{N-1} \cdots s_1  K_{\omega_m} \bar T_{\omega_{m-1}} f'=   K_{\sigma_{m+1}} T_{\omega_{m-1}}^\star
  K_{\omega_m}   s_1 \dots s_{N-1} \bar T_{N-1} \cdots \bar T_m f'
\end{equation*}  
With the help of $s_{m-1} \cdots s_1 K_{\omega_{m}}= K_{\omega_{m}} s_{1} \cdots s_{m-1}=K_{\omega_{m-1}}$ and $ K_{\omega_{m-1}} \bar T_{\omega_{m-1}}=   T_{\omega_{m-1}}^\star K_{\omega_{m-1}}$, this now amounts to showing that
\begin{equation*}
 T_m^\star \cdots T_{N-1}^\star s_{N-1} \cdots s_{m}  f'=  K_{\sigma_{m+1}} s_{m} \dots s_{N-1} \bar T_{N-1} \cdots \bar T_m f'
\end{equation*}  
Letting $\sigma_m=[1,\dots,m-1,N,N-1,\dots,m]$ and using the fact that $f' \in R_m$, this is seen to hold since
\begin{equation*}
\begin{split}
    T_m^\star \cdots T_{N-1}^\star s_{N-1} \cdots s_{m}  f'=
 T_m^\star \cdots T_{N-1}^\star K_{\sigma_m}  f'& =  K_{\sigma_m}  \bar T_{N-1} \cdots \bar T_{m}   f' \\ 
 &=
  K_{\sigma_{m+1}} s_{m} \dots s_{N-1} \bar T_{N-1} \cdots \bar T_m f'
  \end{split}
\end{equation*}  

We now consider the general case in \eqref{eqinv11}.  From the relation
$$
Y_i= \bar T_{i-1} \cdots \bar T_{1} Y_1 \bar T_{1} \cdots \bar T_{i-1} 
$$
we have to show that, for $1 \leq i \leq m$, we have
$$
\bigl( \bar T_{i-1}^\star \cdots \bar T_{1}^\star  Y_1^\star \bar T_{1}^\star \cdots \bar T_{i-1}^\star \bigr) \tau_1 \dots \tau_m K_{\omega_m} \bar T_{\omega_m} =  \tau_1 \dots \tau_m K_{\omega_m} \bar T_{\omega_m} \bigl( T_{i-1} \cdots  T_{1} Y_1  T_{1} \cdots  T_{i-1}  \bigr)
$$
But this is indeed the case since using the $Y_1$ case  and $T_i^\star K_{\omega_m} \bar T_{\omega_m}= K_{\omega_m} \bar T_{\omega_m} T_i$
for all $i=1,\dots,m$, we get that 
\begin{equation*}
  \begin{split}
    \bigl( \bar T_{i-1}^\star \cdots \bar T_{1}^\star  Y_1^\star \bar T_{1}^\star \cdots \bar T_{i-1}^\star \bigr) \tau_1 \dots \tau_m K_{\omega_m} \bar T_{\omega_m} 
    & =  \bar T_{i-1}^\star \cdots \bar T_{1}^\star  Y_1^\star \tau_1 \dots \tau_m   K_{\omega_m}   \bar   T_{\omega_m}  T_{1} \cdots T_{i-1} \\
    & =  \bar T_{i-1}^\star \cdots \bar T_{1}^\star  \tau_1 \dots \tau_m   K_{\omega_m}   \bar   T_{\omega_m}  \bar Y_1  T_{1} \cdots T_{i-1} \\
     & =   \tau_1 \dots \tau_m   K_{\omega_m}   \bar   T_{\omega_m}  \bigl( T_{i-1} \cdots T_{1} \bar Y_1  T_{1} \cdots T_{i-1} \bigr)
\end{split}    
\end{equation*}
Finally, proceeding as in the proof of \eqref{eqYY}, in order to prove \eqref{eqinv22} we only have to show that
\begin{equation*}
( \mathcal S^{t}_{m+1,N})^\star Y^\star_{m+1}  \tau_1 \cdots \tau_m K_{\omega_m} \bar T_{\omega_m} f=  \tau_1 \cdots \tau_m K_{\omega_m} \bar T_{\omega_m}  \mathcal S^{t}_{m+1,N} \bar Y_{m+1} f 
\end{equation*}
on any $f\in R_m$. This is equivalent to proving that
\begin{equation*}
  K_{\sigma_{m+1}} ( \mathcal S^{t}_{m+1,N})^\star Y^\star_{m+1}  \tau_1 \cdots \tau_m K_{\omega_m} \bar T_{\omega_m}f=  \tau_1 \cdots \tau_m K_{\omega_m} \bar T_{\omega_m}  \mathcal S^{t}_{m+1,N} \bar Y_{m+1} f 
\end{equation*}
since  $\tau_1 \cdots \tau_m K_{\omega_m} \bar T_{\omega_m}  \mathcal S^{t}_{m+1,N} \bar Y_{m+1}$ preserves $R_m$.
 Using $(\mathcal S^{t}_{m+1,N})^\star   T_i^\star=t^{-1} (\mathcal S^{t}_{m+1,N})^\star$ and $\bar T_i f = t^{-1} f$  for $i=m+1,\dots,N$, 
this thus  amounts to showing that
\begin{equation} \label{eqappfinal}
  \begin{split}
    \mathcal S^{t}_{m+1,N}  K_{\sigma_{m+1}}  s_{N-1} \cdots s_1 \tau_1^{-1} \bar T_1^\star \cdots  \bar T_m^\star & \tau_1 \cdots \tau_m K_{\omega_m} \bar T_{\omega_m} f\\
    & =  \tau_1 \cdots \tau_m K_{\omega_m} \bar T_{\omega_m}  \mathcal S^{t}_{m+1,N} T_m \cdots T_1 \tau_1^{-1} s_1 \cdots s_{m}  f
\end{split}    
\end{equation}
where we have used the relation $ K_{\sigma_{m+1}}  (\mathcal S^{t}_{m+1,N})^\star  =  \mathcal S^{t}_{m+1,N}   K_{\sigma_{m+1}}$.  We will now see that \eqref{eqappfinal} holds. We first use $K_{\sigma_{m+1}}  s_{N-1} \cdots s_{m+1}=K_{\sigma_{m+2}}$ to obtain
\begin{equation*}
\begin{split}
 \mathcal S^{t}_{m+1,N} K_{\sigma_{m+1}}  s_{N-1} \cdots s_1 \tau_1^{-1} \bar T_1^\star \cdots  \bar T_m^\star  &\tau_1 \cdots \tau_m   K_{\omega_m} \bar T_{\omega_m} f \\
   &  =   \mathcal S^{t}_{m+1,N} K_{\sigma_{m+2}}  s_{m} \cdots s_1 \tau_1^{-1} \bar T_1^\star \cdots  \bar T_m^\star  \tau_1 \cdots \tau_m K_{\omega_m} \bar T_{\omega_m} f \\
    & =   \mathcal S^{t}_{m+1,N}  s_{m} \cdots s_1 \tau_1^{-1} \bar T_1^\star \cdots  \bar T_m^\star  \tau_1 \cdots \tau_m K_{\omega_m} \bar T_{\omega_m} f \
\end{split}
\end{equation*}
since $K_{\sigma_{m+2}} f = f$. We then observe that 
$$ \bar T_1^\star \cdots  \bar T_m^\star  T_{\omega_m}^\star K_{\omega_{m+1}}= K_{\omega_{m+1}}  T_m \cdots  T_1  \bar T_{\omega_m'}= K_{\omega_{m+1}}  \bar T_{\omega_m}   T_m \cdots   T_1
$$ 
since $s_m \cdots s_1 \omega_m' =\omega_m s_m \cdots s_1$, where $\omega_m'=[1,m,m-1,\dots,2,m+1,\dots,N]$.
But then \eqref{eqappfinal} holds given that
\begin{equation*}
  \begin{split}
    \mathcal S^{t}_{m+1,N}  s_{m} \cdots s_1 \tau_1^{-1} \bar T_1^\star \cdots &  \bar T_m^\star  \tau_1 \cdots \tau_m K_{\omega_m} \bar T_{\omega_m} f \\
    & =   \mathcal S^{t}_{m+1,N}  s_{m} \cdots s_1 \tau_1^{-1} \bar T_1^\star \cdots  \bar T_m^\star  T_{\omega_m}^\star \tau_1 \cdots \tau_m K_{\omega_m}  f \\
    & =   \mathcal S^{t}_{m+1,N}  s_{m} \cdots s_1 \tau_1^{-1}   \bar T_1^\star \cdots  \bar T_m^\star  T_{\omega_m}^\star   \tau_1 \cdots \tau_m K_{\omega_{m+1}} s_1 \cdots s_m  f \\
    & =   \mathcal S^{t}_{m+1,N}  K_{\omega_m} \tau_{m+1}^{-1}   \bar T_{\omega_m}  T_m \cdots   T_1  \tau_2 \cdots \tau_{m+1} s_1 \cdots s_m  f \\
    & =   \mathcal S^{t}_{m+1,N}  K_{\omega_m} \tau_{1} \cdots \tau_m  \bar T_{\omega_m}  T_m \cdots   T_1   \tau_1^{-1} s_1 \cdots s_m  f \\
     & =  \tau_{1} \cdots \tau_m  K_{\omega_m}  \bar T_{\omega_m}    \mathcal S^{t}_{m+1,N}  T_m \cdots   T_1    \tau_1^{-1} s_1 \cdots s_m  f   
      \end{split}
\end{equation*}
\end{proof}

\end{appendix}


\begin{thebibliography}{99}

\bibitem{A} P. Alexandersson,  {\it Non-symmetric Macdonald polynomials and Demazure–Lusztig operators}, Séminaire Lotharingien de Combinatoire, 76 (2019). 

\bibitem{BF} T.~H.~Baker and P.~J.~Forrester {\it  A $q$-analogue of the type $A$ Dunkl Operator and Integral Kernel}, Int. Math. Res. Not. {\bf 14} (1997), 667–-686.  







\bibitem{BDLM2}
O. Blondeau-Fournier, P. Desrosiers, L. Lapointe and P. Mathieu,
{\it Macdonald polynomials in superspace as eigenfunctions of commuting operators}, Journal of Combinatorics {\bf 3}, no. 3, 495--562 (2012).






\bibitem{Che}
I. Cherednik, {\it Non-symmetric Macdonald polynomials}, Int. Math.  Res. Notices {\bf 10}  (1995) 483-515.














\bibitem{BG} B.G. Goodberry, {Partially-symmetric Macdonald polynomials}, Ph.D. Thesis, Virginia Polytechnic Institute and State University, 2022. 
  

\bibitem{Haiman}
M. Haiman, {\it Hilbert schemes, polygraphs, and the Macdonald
positivity conjecture}. J. Amer. Math. Soc., \textbf{14} (2001),
941--1006.

\bibitem{HHL}
J. Haglund, M. Haiman and N. Loehr, {\it A combinatorial formula for nonsymmetric Macdonald polynomials}, Amer. J. Math. {\bf 130} (2008), 359--383. 

%\bibitem{HS} J. Haglund and L. Stevens, {\it An extension of the Foata map to standard Young tableaux}, S\'em. Lothar. Combin. {\bf 56} (2006), Art. B56c. 

\bibitem{L} L. Lapointe, {\it $m$-Symmetric functions, non-symmetric Macdonald polynomials and positivity conjectures}, arXiv:2206.05177.
  


  
\bibitem{M} I.~G. Macdonald, {Symmetric functions and Hall
polynomials}. 2nd edition, Clarendon Press, Oxford, 1995.

\bibitem{Mac1}
               {I.~G.~ Macdonald},
                {\it Affine Hecke algebras and orthogonal polynomials},
S\'eminaire Bourbaki 1994-95, expos\'e 797, p.  189-207. 
  
\bibitem{Mac2}
               {I.~G.~ Macdonald},
                {Affine Hecke algebras and orthogonal polynomials},
Cambridge Univ. Press (2003). 

\bibitem{Mar}
D. Marshall, {\it Symmetric and nonsymmetric Macdonald polynomials},
Ann. Comb. {\bf 3} (1999), 385--415.


\bibitem{MN} K. Mimachi and M. Noumi, {\it A reproducing kernel for non-symmetric Macdonald polynomials}, Duke Math. J. {\bf 91(3)} (1998), 621--634.



\bibitem{Stan} R.P. Stanley, Enumerative Combinatorics, vol. 2, Cambridge University Press, (1999).


%\bibitem{Z} M. Zabrocki, {\it A Macdonald vertex operator and standard tableaux statistics for the two column $(q,t)$-Kostka coefficients}, Electron. J. Combin. {\bf 5} (1998), R45. 




\end{thebibliography}
\end{document}